\theoremstyle{plain}
\newtheorem{thm}{Theorem}[section]
\newtheorem{prop}[thm]{Proposition}
\newtheorem{lem}[thm]{Lemma}
\newtheorem{claim}[thm]{Claim}
\newtheorem{cor}[thm]{Corollary}
\newtheorem*{thm*}{Theorem}
\theoremstyle{definition}
\newtheorem*{NaC}{Notation and Convention}
\newtheorem*{ACK}{Acknowledgment}
\theoremstyle{remark}
\newtheorem{rem}[thm]{Remark}
\numberwithin{equation}{section}
\newcommand{\Z}{\mathbb{Z}}
\newcommand{\Q}{\mathbb{Q}}
\newcommand{\C}{\mathbb{C}}
\renewcommand{\P}{\mathbb{P}}
\newcommand{\I}{\mathcal{I}}
\newcommand{\D}{\Delta}
\newcommand{\e}{\varepsilon}
\newcommand{\G}{\Gamma}
\newcommand{\s}{\sigma}
\newcommand{\mf}{\mathfrak}
\newcommand{\emp}{\varnothing}
\newcommand{\ol}{\overline}
\newcommand{\wt}{\widetilde}
\newcommand{\wh}{\widehat}
\newcommand{\ra}{\Rightarrow}
\newcommand{\hra}{\hookrightarrow}
\newcommand{\epm}{\twoheadrightarrow}
\DeclareMathOperator{\rk}{rk}
\DeclareMathOperator{\Sym}{\mathrm{Sym}}
\DeclareMathOperator{\Spec}{\mathrm{Spec}}
\DeclareMathOperator{\Proj}{\mathrm{Proj}}
\DeclareMathOperator{\Exc}{\mathrm{Exc}}
\DeclareMathOperator{\Pic}{\mathrm{Pic}}
\DeclareMathOperator{\red}{\mathrm{red}}
\DeclareMathOperator{\Bl}{\mathrm{Bl}}
\DeclareMathOperator{\Bs}{\mathrm{Bs}}
\DeclareMathOperator{\pr}{pr}
\DeclareMathOperator{\Hom}{Hom}
\DeclareMathOperator{\Ker}{Ker}
\DeclareMathOperator{\Cok}{Cok}
\DeclareMathOperator{\Ext}{Ext}
\DeclareMathOperator{\Hilb}{Hilb}
\newcommand{\mcC}{\mathcal{C}}
\newcommand{\mcE}{\mathcal{E}}
\newcommand{\mcF}{\mathcal{F}}
\newcommand{\mcG}{\mathcal{G}}
\newcommand{\mcO}{\mathcal{O}}
\newcommand{\mcI}{\mathcal{I}}
\newcommand{\mcL}{\mathcal{L}}
\newcommand{\mcN}{\mathcal{N}}
\newcommand{\mcM}{\mathcal{M}}
\let\Im\relax
\DeclareMathOperator{\Im}{\mathrm{Im}}
\title[\tiny{Rank $2$ weak Fano bundles on del Pezzo $3$-folds of degree $4$}]{Classification of rank two weak Fano bundles on del Pezzo threefolds of degree four}
\author[T.FUKUOKA, W.HARA, D.ISHIKAWA]{Takeru Fukuoka Wahei Hara, Daizo Ishikawa}
\address[T.FUKUOKA]{Graduate School of Mathematical Sciences\\The University of Tokyo\\3-8-1 Komaba\\Meguro-ku, Tokyo 153-8914, Japan}
\email{tfukuoka@ms.u-tokyo.ac.jp}
\address[W.HARA]{The Mathematics and Statistics Building, University of Glasgow, University Place, Glasgow, G12 8QQ, UK.}
\email{wahei.hara@glasgow.ac.uk}
\address[D.ISHIKAWA]{Department of Mathematics, School of Science and Engineering, Waseda University, Ohkubo 3-4-1, Shinjuku, Tokyo 169-8555, JAPAN}
\email{azoth@toki.waseda.jp}
\date{\today}
\subjclass[2010]{14J60, 14J45, 14J30.}
\keywords{del Pezzo $3$-fold, weak Fano manifold, vector bundle}
\begin{document}
\maketitle
\begin{abstract}
We classify rank two vector bundles on a given del Pezzo threefold of degree four whose projectivizations are weak Fano into seven cases. 
We also give an example for each of these seven cases.
\end{abstract}
\tableofcontents
\section{Introduction}

\subsection{Motivation}
The motivation of this study comes from the classification of Fano manifolds. 
Since smooth Fano $3$-folds were classified (see \cite{Fanobook} and references therein), 
%by G. Fano, V. A. Iskovskikh, V. V. Shokurov, T. Fujita, S. Mori, and S. Mukai,  
many researchers have treated the classification of Fano $4$-fold having projective bundle structures. 
In 1990, Szurek and Wi\'{s}niewski called a vector bundle whose projectivization is Fano a \emph{Fano bundle} and gave a classification of rank $2$ Fano bundles on $\P^{3}$ or a smooth hyperquadric $\Q^{3}$ \cite{sw}. 
After that, the classification of rank $2$ Fano bundles over smooth Fano $3$-folds 
has been addressed by many researchers (e.g. \cite{sw,lan,mos1,mos2}). 
In particular, Mu\~{n}oz, Occhetta, and Sol\'{a} Conde classified rank $2$ Fano bundles on Fano $3$-folds of Picard rank $1$ \cite{mos2}. 

On the other hand, after the establishment of the classification of Fano $3$-folds, 
several papers also started classifying weak Fano $3$-folds. 
In 1989, K. Takeuchi developed his 2-ray game method by considering not only Fano $3$-folds but also weak Fano $3$-folds of Picard rank $2$ \cite{Takeuchi89}. 
By this $2$-ray game method, he successfully gave a concise proof of the existence of a line on a Mukai $3$-fold, which is a new perspective on the classification of Fano $3$-folds. 
Since the establishment of the $2$-ray game method, classifying weak Fano $3$-folds of Picard rank $2$ has been considered to be significant and treated by many researchers (e.g. \cite{Takeuchi89,lan,JPR05,CM13}).

In view of these previous researches, classifying weak Fano $4$-folds with Picard rank $2$ would be important to investigate Fano $4$-folds. 
Our approach to this problem is to consider weak Fano $4$-folds with $\P^{1}$-bundle structures, as Szurek-Wi\'{s}niewski did \cite{sw}. 

\subsection{Known classification of weak Fano bundles}
In this paper, we say that a vector bundle $\mcE$ on a smooth projective variety $X$ is \emph{weak Fano} if its projectivization $\P_{X}(\mcE)$ is a weak Fano manifold. 
Weak Fano bundles are firstly introduced by Langer \cite{lan} as generalizations of Fano bundles. 
Until now, rank $2$ weak Fano bundles are classified when the base space is the projective space \cite{lan,yas,Ohno16v7} or a hypercubic in $\P^{4}$ \cite{ishi}. 

We quickly review these known results. 
On $\P^{2}$, Langer \cite{lan} and Ohno \cite{Ohno16v7} classified weak Fano bundles whose 1st Chern class is odd. 
Yasutake \cite{yas} classified rank $2$ weak Fano bundles over $\P^{2}$ 
whose 1st Chern class is even 
except for the existence of a weak Fano rank $2$ bundle $\mcE$ with $c_{1}(\mcE)=0$ and $c_{2}(\mcE)=6$. 
Meanwhile, Cutrone-Marshburn \cite[P.2, L.14]{CM13} pointed out that such a weak Fano bundle exists as the Bordiga scroll \cite{OttavianiBordiga}. 
Yasutake also classified rank $2$ weak Fano bundles on $\P^{n}$ with $n \geq 3$. 
Thus the rank $2$ weak Fano bundles over projective spaces are classified. 
%After these classifications of weak Fano bundles over $\P^{n}$, 
Rank $2$ weak Fano bundles on smooth cubic hypersurfaces in $\P^{4}$ are also classified by the 3rd author of this article \cite{ishi}.

\subsection{Instanton bundles and Ulrich bundles}\label{sec-instantonUlrich}
Besides, weak Fano bundles have emerged in the study of different contexts. 
For example, minimal instanton bundles and special Ulrich bundles are most studied weak Fano (but not Fano) bundles on del Pezzo $3$-folds. 

Let $X$ be $\P^{3}$ or a del Pezzo $3$-fold of Picard rank $1$, i.e., a smooth Fano $3$-fold of Picard rank $1$ whose canonical divisor is divisible by $2$. 
Then an \emph{instanton bundle} on $X$ is defined to be 
a rank $2$ slope stable vector bundle $\mcE$ with $c_{1}(\mcE)=0$ and $H^{1}(\mcE(\frac{K_{X}}{2}))=0$ \cite{ADHM,Kuznetsov2012,Faenzi2014}. 
The moduli space of instanton bundles on $\P^{3}$ is a significant object for mathematical physics. 
Indeed, a certain subset of this moduli space corresponds to self-dual solutions of the $\mathrm{SU}(2)$ Yang-Mills equations on the $4$-sphere $S^{4}$ up to gauge equivalence \cite{Atiyah-Ward,ADHM}. 
After this, 
Faenzi \cite{Faenzi2014} and Kuznetsov \cite{Kuznetsov2012} defined a generalized notion for instanton bundles $\mcE$ on a del Pezzo $3$-fold $X$ as above. 
By the above reason, moduli spaces of instanton bundles on $\P^{3}$ and del Pezzo $3$-folds have been studied deeply (e.g. \cite{Faenzi2014,Kuznetsov2012}). 

%
%Minimal instanton bundles and Ulrich
%
For an instanton bundle $\mcE$ on $X$, 
it is known that $-K_{X}.c_{2}(\mcE) \geq 4$ 
(cf. \cite[Corollary~3.2]{Kuznetsov2012}, \cite[Lemma~1.2]{Faenzi2014}). 
Thus an instanton bundle $\mcE$ is said to be \emph{minimal} if $-K_{X}.c_{2}(\mcE)=4$. 
When $X$ is a del Pezzo $3$-fold, 
a vector bundle $\mcE$ is a minimal instanton bundle if and only if $\mcE(1)$ is a \emph{special Ulrich} bundle of rank $2$ \cite[Lemma~3.1]{Kuznetsov2012}, 
which is defined to be a rank $2$ vector bundle $\mcF$ 
such that $\det \mcF \simeq \mcO_{X}(-K_{X}) \simeq \mcO_{X}(2)$ and 
$H^{\bullet}(\mcF(-j))=0$ for every $j \in \{1,2,3\}$. 
When $d:=\mcO_{X}(1)^{3} \geq 3$, the above definition is equivalent to say that $\mcF$ has the following linear resolution on $\P^{d+1}$: 
\[0 \to \mcO_{\P^{d+1}}(2-d)^{\oplus b_{d-2}} \to \mcO_{\P^{d+1}}(3-d)^{\oplus b_{d-3}} \to \cdots \to \mcO_{\P^{d+1}}^{\oplus b_{0}} \to \mcF \to 0,\]
where $b_{i}=2d\binom{d-2}{i}$ \cite{Beauville}. 
Moreover, Beauville showed that 
every special Ulrich bundle $\mcF$ of rank $2$ is isomorphic to 
a unique non-trivial extension of $\I_{C}(2)$ by $\mcO_{X}$, 
where $C$ is a normal elliptic curve in $X$ \cite[Remark~6.3]{Beauville}. 
Since every normal elliptic curve $C$ is defined by quadratic equations \cite{Hulek}, 
$\mcF$ is globally generated. 
Since $\det \mcF \simeq \mcO(-K_{X})$, 
we conclude that $\mcF$ is a weak Fano bundle. 

In summary, on a del Pezzo $3$-fold of degree $d \geq 3$, 
minimal instanton bundles are the most well-studied examples of weak Fano bundles. 
In this article, we supplementally show that every slope stable weak Fano bundle $\mcE$ with $\rk\mcE=2$ and $c_{1}(\mcE)=0$ is an instanton bundle (see Corollary~\ref{cor-instanton}). 

\subsection{Main Result}
In this article, we classify rank $2$ weak Fano bundles over a del Pezzo $3$-fold of degree $4$, which is a smooth complete intersections of two hyperquadrics in $\P^{5}$ \cite{Fujita80}.  
Our classification is given as follows. 

\begin{thm} \label{mthm}
Let $X$ be a smooth complete intersection of two hyperquadrics in $\P^{5}$. 
For a normalized bundle $\mcE$ of rank $2$, 
$\mcE$ is a weak Fano bundle if and only if $\mcE$ is one of the following. 
\renewcommand{\labelenumi}{(\roman{enumi})}
\begin{enumerate}
\item $\mcO_X\oplus\mcO_X(-1)$. 
\item $\mcO_X\oplus\mcO_X$. 
\item $\mcO_X(1)\oplus\mcO_X(-1)$. 
\item A unique non-trivial extension of $\I_{L}$ by $\mcO_X$, 
where $L$ is a line in $X$.
\item A unique non-trivial extension of $\I_{C}$ by $\mcO_X(-1)$, 
where $C$ is a smooth conic in $X$.
\item A unique non-trivial extension of $\I_{C}(1)$ by $\mcO_X(-1)$, 
where $C$ is a non-degenerate smooth elliptic curve in $X$ of degree $6$. 
\item A unique non-trivial extension of $\I_{C}(1)$ by $\mcO_X(-1)$, 
where $C$ is a non-degenerate smooth elliptic curve in $X$ of degree $7$ which is defined by quadratic equations. 
\end{enumerate}
In the above statement, a unique non-trivial extension $\mcE$ of $\mcF$ by $\mcG$ means that $\dim \Ext^{1}(\mcF,\mcG)=1$ and $\mcE$ fits into an exact sequence $0 \to \mcG \to \mcE \to \mcF \to 0$ such that the extension class is non-zero in $\Ext^{1}(\mcF,\mcG)$. 

Furthermore, on an arbitrary smooth complete intersection of two hyperquadrics in $\P^{5}$, 
there exist examples for each case of (i) -- (vii). 
\end{thm}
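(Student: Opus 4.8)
The plan is to dispose of (i)--(iii) for free and to construct the bundles in (iv)--(vii) by the Hartshorne--Serre correspondence. Cases (i)--(iii) require no argument beyond the already-established ``if'' direction, since $\mcO_X\oplus\mcO_X(-1)$, $\mcO_X\oplus\mcO_X$ and $\mcO_X(1)\oplus\mcO_X(-1)$ are defined on every $X$. In each of (iv)--(vii) the bundle is a non-split extension
\[0\longrightarrow\mcO_X(a)\longrightarrow\mcE\longrightarrow\I_C(b)\longrightarrow 0,\qquad (a,b)=(0,0),\ (-1,0),\ (-1,1),\ (-1,1),\]
with $C$ respectively a line, a smooth conic, a non-degenerate smooth elliptic sextic, and a non-degenerate smooth elliptic septic with quadratic ideal. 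So I would reduce the existence statement to the following, all to be carried out uniformly over an arbitrary $X$: (1) exhibit such a curve $C$; (2) show $\Ext^1(\I_C(b),\mcO_X(a))$ is one-dimensional, which gives existence and uniqueness of the non-split extension; and (3) show the resulting $\mcE$ is locally free with $\det\mcE\cong\mcO_X(a+b)$, so that it is normalized with the $c_1$ claimed in the theorem. Weak-Fanoness of $\mcE$ is then precisely the ``if'' implication, so nothing further is needed there.

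I would handle (2) and (3) in one uniform computation. Since $C$ is always a local complete intersection curve of codimension $2$ in the smooth threefold $X$, one has $\mathcal{E}xt^{\,q}_{\mcO_X}(\mcO_C,\mcO_X)=0$ for $q\le 1$ and $\mathcal{E}xt^{\,2}_{\mcO_X}(\mcO_C,\mcO_X)\cong\det N_{C/X}\cong\omega_C\otimes\omega_X^{-1}|_C$. Feeding this into the local-to-global spectral sequence, together with $H^1(\mcO_X(j))=H^2(\mcO_X(j))=0$ for $j\in\{0,-1,-2\}$ (Kodaira vanishing and Serre duality on the Fano threefold $X$), gives
\[\Ext^1\!\bigl(\I_C(b),\mcO_X(a)\bigr)\;\cong\;\Ext^2\!\bigl(\mcO_C,\mcO_X(a-b)\bigr)\;\cong\;H^0\!\bigl(C,\ \omega_C\otimes\omega_X^{-1}|_C\otimes\mcO_C((a-b)H)\bigr).\]
A short case-check shows the sheaf on the right is $\mcO_C$: for a line, $\omega_L\cong\mcO_X(-2H)|_L\cong\omega_X|_L$ and $a=b$; for a conic, $\omega_C\cong\mcO_X(-H)|_C$ while $\omega_X^{-1}\otimes\mcO_X((a-b)H)\cong\mcO_X(H)$; for an elliptic curve, $\omega_C\cong\mcO_C$ while $\omega_X^{-1}\otimes\mcO_X((a-b)H)\cong\mcO_X$. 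Hence $\Ext^1(\I_C(b),\mcO_X(a))\cong H^0(\mcO_C)=\mathbb{C}$, which is (2). The same triviality says $\mathcal{E}xt^{\,1}_{\mcO_X}(\I_C(b),\mcO_X(a))\cong\mcO_C$, and since $H^1$ of $\mathcal{H}om(\I_C(b),\mcO_X(a))\cong\mcO_X((a-b)H)$ vanishes, the edge map $\Ext^1\to H^0(\mcO_C)$ is an isomorphism; so the non-split class restricts to a nowhere-vanishing section of $\mcO_C$, and the Serre construction produces a locally free $\mcE$ with $\det\mcE\cong\mcO_X(a+b)$, which is (3). (Equivalently, this triviality is the Cayley--Bacharach condition $\omega_C\cong\omega_X\otimes\det\mcE|_C$.)

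For the curves (1), three of the four are routine. A line exists because the Hilbert scheme of lines on a del Pezzo threefold of degree $4$ is a non-empty smooth surface; pick any. A smooth conic exists because the Hilbert scheme of conics is non-empty with smooth general member; pick one. A non-degenerate smooth elliptic sextic in $X$ is the zero locus of a general section of a rank $2$ special Ulrich bundle on $X$ --- equivalently, of $\mcE_0(1)$ for a minimal instanton bundle $\mcE_0$ --- and such bundles are known to exist on every del Pezzo threefold of degree $d\ge 3$ by the results recalled in \S\ref{sec-instantonUlrich} (alternatively one builds the sextic directly and reads off $\deg=6$, $p_a=1$, smoothness, irreducibility and non-degeneracy from its defining sequence).

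The elliptic septic of case (vii) is where I expect the real work, and it is the main obstacle of the whole statement. Twisting the extension by $\mcO_X(1)$, what one wants is a globally generated rank $2$ bundle $\mcF=\mcE(1)$ with $\det\mcF\cong\mcO_X(2H)$, $H^1(\mcF)=0$ and $H^0(\mcF(-H))=0$, sitting in $0\to\mcO_X\to\mcF\to\I_{C_7/X}(2H)\to 0$; in the language of \S\ref{sec-instantonUlrich} this says $\mcE$ is a stable instanton bundle with $-K_X.c_2(\mcE)=6$ whose twist $\mcE(1)$ is globally generated. Given such an $\mcF$, a general section vanishes along a smooth curve $C_7$ (Bertini, using global generation), which is irreducible and non-degenerate (using $H^0(\mcF(-H))=0$); the sheaf $\I_{C_7/X}(2H)$ is globally generated as a quotient of $\mcF$, so $C_7$ is cut out on $X$ by hyperquadric sections and hence --- $X$ itself being cut out by hyperquadrics in $\P^5$ --- is defined by quadratic equations in $\P^5$; finally $H^1(\mcF)=0$ forces $2$-normality, whence $\deg C_7=7$ and $p_a(C_7)=1$. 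Concretely I would try to produce $C_7$ (equivalently $\mcF$) either by smoothing the nodal curve $C_6\cup L$, which already has arithmetic genus $1$ and degree $7$ once the line $L$ meets the sextic $C_6$ transversally at a single point, or by constructing $\mcF$ directly --- by a monad, as an extension built on the minimal instanton, or by realizing $C_7$ on a suitable quadric-section surface of $X$. In every route the genuine difficulty is that this must be done on an \emph{arbitrary} del Pezzo threefold of degree $4$, not a general one: one must supply the needed uniform vanishings (for the smoothing, $H^1(N_{C_6\cup L/X})=0$) and, above all, must ensure that the resulting curve is genuinely cut out by quadrics --- not merely that $h^0(\I_{C_7/\P^5}(2))=7$ --- using only the index-$2$, degree-$4$, Picard-rank-$1$ geometry common to all such threefolds. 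Once $C_7$ is in hand, the second paragraph produces $\mcE$, and the ``if'' direction completes the proof.
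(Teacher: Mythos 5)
Your proposal addresses only the final clause of the theorem (existence of examples) together with the easy half of the biconditional; the ``only if'' direction --- that \emph{every} normalized rank two weak Fano bundle on $X$ appears on the list --- is nowhere argued. In the paper this is the bulk of the work: one first pins $(c_1,c_2)$ down to $\{(-1,2),(0,2),(0,3)\}$ in the $h^0(\mcE)=0$ case using $(-K_{\P_X(\mcE)})^4>0$, Riemann--Roch and Le Potier vanishing (Theorem~\ref{thm-I1}), and then --- the hardest step --- proves that $\mcE(1)$ is globally generated for such $\mcE$, which is what forces $\mcE$ into the shapes (v)--(vii). For $c_1=0$ this is Theorem~\ref{F-thm-glgen}, proved by running a ladder of weak Mukai manifolds down to a K3 surface and analysing the base curve of $|\xi|$. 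Nothing in your proposal replaces this, so even granting all your intermediate claims you have at most half a proof of the stated theorem.

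Within the existence half, your Serre-construction bookkeeping (the computation $\Ext^1(\I_C(b),\mcO_X(a))\cong H^0(\omega_C\otimes\omega_X^{-1}\otimes\mcO_C((a-b)H))\cong\C$ and the local freeness of the resulting extension) is correct and consistent with the paper, as is the treatment of cases (i)--(vi). The genuine gap is exactly where you say you ``expect the real work'': you never produce, on an \emph{arbitrary} $X$, a septic elliptic curve defined by quadratic equations; you only list candidate strategies and the obstacles to each. The paper's resolution has two ingredients you are missing. First, Proposition~\ref{7-stability} (via Mukai's technique) converts ``defined by quadrics'' into the condition ``$C$ has no trisecant lines'', whose negation is a closed condition on the Hilbert scheme, so it suffices to verify it on a degenerate fibre of a smoothing family. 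Second, the degeneration is chosen to be a \emph{quintic} elliptic curve union a \emph{conic} meeting it at one point (Lemma~\ref{lem-constCG}), arranged precisely so that the reducible curve itself has no trisecants. Your suggested degeneration $C_6\cup L$ cannot play this role: the line $L$ is a component of the curve, hence is itself a line meeting $C_6\cup L$ in a scheme of length $\ge 3$, so the central fibre fails the no-trisecant condition for trivial reasons and the openness argument never gets started; one would need a genuinely different mechanism to certify that the smoothed curve is cut out by quadrics. Until one of your routes is actually carried out, case (vii) --- and with it the theorem --- remains unproved.
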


\begin{rem}\label{rem-main}
We give some remarks about the above result. 
\begin{enumerate}
\item The vector bundles of type (i), (ii) and (v) are Fano bundles \cite{mos2}. Others are not Fano bundles but weak Fano bundles. 
\item The vector bundles of type (i-iv) are not slope stable. Others are slope stable. 
\item For every vector bundle $\mcE$, 
$\mcE$ is of type (v) 
if and only if 
$\mcE$ is the pull-back of the Spinor bundle on a hyperquadric $Q$ under a finite morphism $X \to Q$ (see Theorem~\ref{thm-H}). 
In particular, the vector bundles of type (v) are Fano bundles \cite{mos2}. 
\item For every vector bundle $\mcE$, 
$\mcE$ is of type (vi) 
if and only if 
$\mcE(1)$ is a special Ulrich bundle \cite[Remark~6.3.3]{Beauville}, 
which is equivalent to say that 
$\mcE$ is an instanton bundle with $c_{2}(\mcE)=2$ as in explained in $\S$~\ref{sec-instantonUlrich}. 
\item Every vector bundle $\mcE$ of type (vii) is an instanton bundle with $c_{2}(\mcE)=3$ (see Corollary~\ref{cor-instanton}). 
Unlike the case (3), there is an instanton bundle $\mcE$ with $c_{2}(\mcE)=3$ which is not a weak Fano bundle (see Remark~\ref{rem-nonweakfano}). 
\end{enumerate}
\end{rem}

Our result is essentially different from the classification of rank $2$ weak Fano bundles on a hypercubic in $\P^{4}$ \cite{ishi}, which was done by the 3rd author of this article. 
Briefly speaking, 
he proved that slope stable rank $2$ weak Fano bundles are always minimal instanton \cite[Theorem~1.1]{ishi}. 
In contrast, on a complete intersection of two hyperquadrics, 
other stable bundles appear. 
Indeed, every weak Fano bundle $\mcE$ of type (vii) in Theorem~\ref{mthm} is 
a non-minimal instanton bundle. 
Our result also shows that the zero scheme of a general section of $\mcE(1)$ is a non-projectively normal elliptic curve $C$ defined by quadratic equations. 
Hence $\mcE(1)$ is not $0$-regular but globally generated. 
Note that we can not remove the condition that $C$ is defined by quadratic equations (cf. Theorem~\ref{thm-F-chara} and Remark~\ref{rem-nonweakfano}). 

\subsection{Key results for proving Theorem~\ref{mthm}}

Here we collect the key ingredients for the proof of Theorem~\ref{mthm}. 

Let $X$ be a smooth complete intersection of two hyperquadrics in $\P^{5}$. 
Let $H_{X}$ be a hyperplane section of $X$ and $L_{X}$ a line on $X$. 
Then $\Pic(X) \simeq H^{2}(X,\Z) \simeq \Z[H_{X}]$ and $H^{4}(X,\Z) \simeq \Z[L_{X}]$. 
Using the above isomorphisms, we identify the cohomology classes with integers. 
Note that the property to be weak Fano is slope stable under taking tensor products with line bundles. 
To classify rank $2$ weak Fano bundles $\mcE$, 
we may assume that $\mcE$ is \emph{normalized}, 
i.e., $c_{1}(\mcE) \in \{0,-1\}$.  

Firstly, we will characterize normalized rank $2$ weak Fano bundles having global sections as follows.

\begin{thm}\label{thm-I1}
Let $X$ be a del Pezzo $3$-fold of degree $4$. 
Let $\mcE$ be a normalized rank $2$ weak Fano bundle on $X$. 
Then we obtain the following assertions. 
\begin{enumerate}
\item $h^{0}(\mcE)>0$ if and only if $\mcE$ is one of (i), (ii), (iii), or (iv) in Theorem~\ref{mthm}. 
\item $h^{0}(\mcE)=0$ if and only if $(c_{1}(\mcE),c_{2}(\mcE))=(-1,2),(0,2),(0,3)$. 
\end{enumerate}
\end{thm}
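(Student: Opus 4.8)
The plan is to first dispose of the case $h^{0}(\mcE)>0$ by the standard Serre-type construction, and then attack the numerical restriction on $(c_{1}(\mcE),c_{2}(\mcE))$ in the case $h^{0}(\mcE)=0$ using the geometry of the projectivization together with the weak Fano hypothesis.

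Suppose first $h^{0}(\mcE)>0$. A nonzero section $s\in H^{0}(\mcE)$ vanishes on a closed subscheme $Z\subseteq X$ which is either empty, of codimension $1$, or of codimension $2$; after twisting we may push the codimension-$1$ part into the line bundle and assume $Z$ has codimension $\geq 2$, giving an exact sequence $0\to\mcO_{X}\to\mcE\to\mcI_{Z}(c_{1}(\mcE))\to 0$ with $Z$ locally complete intersection of pure dimension $1$ (or empty). If $Z=\emp$ then $\mcE$ splits as an extension of a line bundle by $\mcO_{X}$, and normalizedness together with the classification of splitting weak Fano bundles (the property being checkable on $\P_X(\mcE)$ via the cone of curves) forces cases (i), (ii), (iii). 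If $Z\neq\emp$, then $\deg Z=c_{2}(\mcE)$ and $\omega_{Z}\simeq\mcO_{Z}(c_{1}(\mcE)+K_{X})$ by adjunction on the total space / Ferrand's construction; I would then run the $2$-ray game on $\P_{X}(\mcE)$, identifying the other extremal contraction with the blow-up of some Fano or weak Fano along $Z$, and use that $-K_{\P_X(\mcE)}$ is nef and big to bound the invariants of $Z$. The only possibility compatible with normalization and positivity of the anticanonical class is $c_{1}(\mcE)=0$, $\deg Z=1$, i.e. $Z=L$ a line, which is case (iv); one checks directly that the extension is nonsplit and unique because $\Ext^{1}(\mcI_{L},\mcO_{X})\simeq\C$.

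Now suppose $h^{0}(\mcE)=0$. After possibly replacing $\mcE$ by $\mcE(1)$ we arrange $h^{0}(\mcE(1))>0$ but $h^{0}(\mcE)=0$ (if no twist has sections, Riemann--Roch on the ruled variety or Bogomolov-type instability contradicts weak Fano-ness), so there is a section of $\mcE(1)$ vanishing in codimension $2$ along a curve $C$ with $0\to\mcO_{X}\to\mcE(1)\to\mcI_{C}(c_{1}(\mcE)+2)\to 0$. By adjunction $\omega_{C}\simeq\mcO_{C}(c_{1}(\mcE)+2+K_{X})=\mcO_{C}(c_{1}(\mcE)-2)$, so when $c_{1}(\mcE)=-1$ (resp.\ $=0$) we get $\deg\omega_{C}=-\deg_{H}C$ (resp.\ $0$); combined with $h^{0}(\mcE)=0$, which forces $C$ to be nondegenerate (no hyperplane contains $C$, else a section of $\mcE$ would appear) and of minimal degree in its linear span, this pins $\deg C$ and the arithmetic genus. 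The core estimate is an upper bound $-K_{\P_X(\mcE)}^{4}\le$ (something), equivalently a bound on $c_{2}(\mcE)$, coming from nefness of $-K_{\P_X(\mcE)}$: I would intersect $-K_{\P_X(\mcE)}$ with the curve classes given by the fiber $\P^{1}$ and by sections over lines $L_{X}\subset X$, using $\mcE|_{L_X}$ which for a weak Fano bundle must be $\mcO\oplus\mcO$, $\mcO(1)\oplus\mcO(-1)$, or $\mcO\oplus\mcO(-1)$ (higher splitting types produce a $-K$-negative curve), and translate this into $0\le c_{2}(\mcE)\le 3$ in the normalized range. A short case check rules out $(c_{1},c_{2})=(-1,0),(-1,1),(0,0),(0,1)$ via $h^{0}(\mcE)>0$ or non-existence, leaving exactly $(-1,2),(0,2),(0,3)$.

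The main obstacle is the case $h^{0}(\mcE)=0$: one must both produce the curve $C$ with controlled invariants \emph{and} extract a sharp numerical bound on $c_{2}(\mcE)$ purely from weak Fano-ness. The delicate point is the analysis of the second extremal ray of $\P_{X}(\mcE)$ — showing it gives a birational (divisorial or small, hence bounded by the Mori theory of fourfolds) contraction rather than a fiber-type one, and that nefness of $-K$ on the contracted locus is what caps $\deg C$; this is where the restriction to degree $4$ del Pezzo threefolds and the explicit structure of their lines and conics (to be developed in the sections following this excerpt) will be used. Boundedness of $c_{2}$ from above is the crux; once that is in hand, excluding the small remaining numerical cases is routine via the section-existence dichotomy already established in part~(1).
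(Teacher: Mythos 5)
Your treatment of the case $h^{0}(\mcE)>0$ is in the right spirit (it is essentially the content of \cite[Lemma~3.2]{ishi}, which the paper simply cites), but the decisive step in the nonsplit case is not a $2$-ray game: it is the elementary inequality $\xi.(-K_{Y})^{3}\geq 0$, valid because $\xi$ is effective (it has a section) and $-K_{Y}$ is nef, which forces $c_{2}\leq 1$ and then pins down the line. Your version asserts the conclusion ("the only possibility compatible with normalization and positivity is $\deg Z=1$") without supplying the inequality that produces it; as written this is a gap, though a repairable one.

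The serious gap is in part (2), in the mechanism you propose for bounding $c_{2}$. Intersecting $-K_{\P_{X}(\mcE)}$ with fibers and with sections over lines only constrains the splitting type of $\mcE|_{L}$ on each line $L$ (namely $|a-b|\leq 2$); this is a pointwise/local condition and gives \emph{no} upper bound on the global invariant $c_{2}(\mcE)$. The actual source of the upper bound is \emph{bigness}: $0<(-K_{Y})^{4}=64(5-c_{2})$ for $c_{1}=-1$ and $64(4-c_{2})$ for $c_{1}=0$, computed from the Grothendieck relation $\xi^{2}-c_{1}\xi.H+\tfrac{c_{2}}{4}H^{2}=0$. Moreover your exclusion list $(-1,0),(-1,1),(0,0),(0,1)$ is incomplete for $c_{1}=-1$: you must also kill $c_{2}=3$ and $c_{2}=4$, which the paper does by (a) showing $h^{0}(\mcE(1))>0$ and using $0\leq(\xi+H).(-K_{Y})^{3}=108-28c_{2}$ to get $c_{2}\leq 3$, and (b) observing that $\chi(\mcE)=\tfrac{1}{2}(2-c_{2})\in\Z$ forces $c_{2}$ even, hence $c_{2}\leq 2$. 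Neither step appears in your sketch. Finally, every implication of the form "$\chi>0$ hence $h^{0}>0$" or "$h^{0}=0$ hence $\chi\leq 0$ hence $c_{2}\geq 2$" requires the vanishing $H^{i}(\mcE(j))=0$ for $i\geq 2$, $j\geq 0$, which the paper gets from Le Potier vanishing applied to the ample bundle $\mcE(2)$ (together with $h^{3}(\mcE)=h^{0}(\mcE(-c_{1}-2))=0$ by Serre duality). You never invoke any such vanishing, so your Riemann--Roch arguments do not close. The digression on the curve $C$, its adjunction and nondegeneracy, is not needed for this theorem; the statement is purely numerical and is settled by the three inputs above.
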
 

By this theorem, it suffices to treat weak Fano bundles $\mcE$ with $(c_{1}(\mcE),c_{2}(\mcE))=(-1,2),(0,2),(0,3)$. 

Secondaly, we characterize weak Fano bundles $\mcE$ with $(c_{1}(\mcE),c_{2}(\mcE))=(-1,2)$ as follows. 

\begin{thm}\label{thm-H}
Let $X$ be a del Pezzo $3$-fold of degree $4$. 
Let $\mcE$ be a rank $2$ vector bundle on $X$. 
Then the following conditions are equivalent.
\begin{enumerate}
\item $\mcE$ is a weak Fano bundle with $(c_{1}(\mcE),c_{2}(\mcE))=(-1,2)$. 
\item $\mcE$ is a Fano bundle with $(c_{1}(\mcE),c_{2}(\mcE))=(-1,2)$. 
In particular, by the classification of Fano bundles on a del Pezzo $3$-fold of degree $4$ \cite{mos2}, $\mcE$ is the restriction of the Spinor bundle \cite{Ottaviani88} on a smooth hyperquadric in $\P^{5}$ containing $X$, or the pull-back of the Spinor bundle on the $3$-dimensional smooth hyperquadric $\Q^{3}$ under a double covering $X \to \Q^{3}$. 
\item $\mcE$ is (v) in Theorem~\ref{mthm}. 
\end{enumerate}
In particular, there exist examples for (v) in Theorem~\ref{mthm}. 
\end{thm}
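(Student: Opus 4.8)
The plan is to prove $(2)\Rightarrow(1)$, $(1)\Rightarrow(2)$ and $(2)\Leftrightarrow(3)$; the ``in particular'' clause will then come for free from the explicit construction used for $(2)\Leftrightarrow(3)$. The implication $(2)\Rightarrow(1)$ is immediate, since a Fano manifold is weak Fano.

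For $(1)\Rightarrow(2)$ I would argue that the weak Fano hypothesis together with $(c_1,c_2)=(-1,2)$ already forces $Y:=\P_X(\mcE)$ to be Fano. Write $\xi:=c_1(\mcO_Y(1))$, so that $\pi_*\mcO_Y(1)=\mcE$ and $-K_Y=2\xi+3\pi^*H$. Since $\rho(Y)=2$, the cone $\ol{\NE}(Y)$ has exactly two extremal rays: the fibre ray $R_f$ of $\pi$, with $-K_Y\cdot R_f=2$, and another one $R$. As $-K_Y$ is nef we have $-K_Y\cdot R\geq 0$, and $-K_Y$ is ample exactly when this is strict; so the whole point is to exclude $-K_Y\cdot R=0$. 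Setting $\lambda_0:=-(\xi\cdot r)/(\pi^*H\cdot r)$ for a generator $r\in R$ (here $\pi^*H\cdot r>0$, else $r\in R_f$), one checks that $-K_Y$ nef $\iff\lambda_0\le 3/2$, with equality $\iff -K_Y\cdot R=0$. Assume $\lambda_0=3/2$. Then a primitive generator $r$ has $\pi^*H\cdot r=2$ and $\xi\cdot r=-3$; a short effectivity discussion (a bisection of $\pi$ over a line cannot realise $\xi\cdot r=-3$ inside the Hirzebruch surface $\P(\mcO\oplus\mcO(-1))$, which already occurs as $\pi^{-1}(\ell)$ for every line $\ell$, and a reducible conic only gives $\xi\cdot r\ge -2$) shows that $\pi|_r$ is birational onto a \emph{smooth} conic $B\subset X$ with $\mcE|_B\cong\mcO_{\P^1}(1)\oplus\mcO_{\P^1}(-3)$. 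The crux is then to show that such a strongly unbalanced conic is incompatible with $c_2(\mcE)=2$; I would do this by analysing $\mcE$ along the plane $\langle B\rangle$ (which meets $X$ exactly in $B$) and using that the only obstruction-carrying cohomology is governed by $c_2$. If instead $\phi_R$ is a small or a divisorial contraction, $r$ lies in its exceptional locus and the same Chern-class bookkeeping, together with a description of that locus inside the $\P^1$-bundle $Y$, yields the same contradiction. Hence $-K_Y\cdot R>0$, $Y$ is Fano, and $\mcE$ is a rank two Fano bundle with $(c_1,c_2)=(-1,2)$, i.e. $(2)$ holds; its explicit shape is then \cite{mos2}.

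For $(2)\Leftrightarrow(3)$ I would set up the dictionary between smooth conics on $X$ and quadric fourfolds through $X$. Given a smooth conic $C\subset X$, it spans a plane $\Pi\cong\P^2\subset\P^5$; writing $X=Q_1\cap Q_2$, both conics $Q_i|_\Pi$ contain $C$ and hence equal $C$, so some member $Q'$ of the pencil $|\I_{X/\P^5}(2)|$ contains $\Pi$. If $Q'$ is smooth, $\Pi$ lies in one of the two rulings of planes of $Q'$, the twisted Spinor bundle $\mcS_{Q'}(1)$ has a section vanishing exactly along $\Pi$, and — since $X$ is a quadric section of $Q'$ — its restriction to $X$ vanishes along $\Pi\cap X=C$. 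As $\dim_{\C}\Ext^1_X(\I_C(1),\mcO_X)=\dim_{\C}\Ext^1_X(\I_C,\mcO_X(-1))=1$ (from the local-to-global spectral sequence, using $H^1(\mcO_X(-1))=H^2(\mcO_X(-1))=0$ and $\mathcal{E}xt^1(\I_C,\mcO_X(-1))\cong\omega_C\otimes\mcO_C(1)\cong\mcO_{\P^1}$, which also shows the non-split extension is locally free), the non-split extension is unique, whence $\mcE(1)\cong\mcS_{Q'}(1)|_X$ and $\mcE\cong\mcS_{Q'}|_X$. If instead $Q'$ has rank $5$, it is a cone over $\Q^3$ with vertex a point $v$, and $v\notin X$ because $X$ is smooth; projection from $v$ realises $X$ as a double cover $X\to\Q^3$, and the same uniqueness identifies $\mcE$ with the pull-back of the Spinor bundle of $\Q^3$. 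Ranks $\leq 4$ do not occur, since then the analogous projection would endow $X$ with a conic bundle structure over a surface, contradicting $\rho(X)=1$. Conversely, for $\mcE$ as in $(2)$ a section of $\mcE(1)$ vanishes along a subscheme which — since a sub-line bundle of $\mcE(1)$ would contradict $h^0(\mcE)=h^0(\mcE(-1))=0$ for the stable bundle $\mcE$ — is purely one-dimensional of degree $c_2(\mcE(1))=2$ and arithmetic genus $0$, hence a smooth conic $C$ for a general section; twisting the Koszul sequence by $\mcO_X(-1)$ exhibits $\mcE$ as a non-split extension of $\I_C$ by $\mcO_X(-1)$. Finally, since the generic member of $|\I_{X/\P^5}(2)|$ is a smooth quadric fourfold, the bundle $\mcS_{Q'}|_X$ exists on every $X$, giving an example of type (v); this proves the ``in particular'' statement.

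The main obstacle is the Mori-theoretic step inside $(1)\Rightarrow(2)$, namely excluding the $K_Y$-trivial extremal ray — equivalently, forbidding the strongly unbalanced conic $B$. This is genuinely sensitive to the Chern data, since there do exist stable rank two bundles with $c_1=0$, $c_2=3$ whose projectivisation is weak but not Fano (cf.\ Remark~\ref{rem-nonweakfano}), so one cannot invoke a general vanishing or positivity statement and must use $c_2(\mcE)=2$ essentially. The remaining ingredients — the $\Ext$ computation, the rank analysis of $Q'$, and the Bertini-type statement for the conic — are routine and geometric.
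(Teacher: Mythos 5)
Your cycle stands or falls with $(1)\Rightarrow(2)$, since that is the only implication extracting information from the weak Fano hypothesis, and it is exactly there that the argument is not carried out. After reducing to the existence of a curve $r$ with $\pi^{*}H\cdot r=2$ and $\xi\cdot r=-3$, i.e.\ a conic $B\subset X$ with $\mcE|_{B}\simeq\mcO_{\P^{1}}(1)\oplus\mcO_{\P^{1}}(-3)$, you propose to rule this out ``by analysing $\mcE$ along the plane $\langle B\rangle$'' and by ``Chern-class bookkeeping''; no mechanism is given by which $c_{2}(\mcE)=2$ forbids such a restriction, and I do not see one that does not amount to re-proving nefness of $\mcE(1)$. (The same applies to the small and divisorial cases of $\phi_{R}$, which you dispatch in one sentence.) As it stands this is a statement of intent rather than a proof, and you flag it yourself as the main obstacle. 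The remaining pieces --- $(2)\Rightarrow(1)$ and the Spinor-bundle dictionary for $(2)\Leftrightarrow(3)$, including the $\Ext^{1}$ computation and the rank analysis of the pencil --- are fine in outline, but they cannot close the equivalence without the missing step.

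The paper avoids Mori theory entirely and proves $(1)\Rightarrow(3)$ directly; the key point is elementary. By Theorem~\ref{thm-I1} and Proposition~\ref{bounds}, $h^{0}(\mcE(1))>0$ and $h^{0}(\mcE)=0$, so a nonzero section of $\mcE(1)$ vanishes along a curve $C$ whose Hilbert polynomial is $2t+1$, i.e.\ a conic. Writing $X=Q_{1}\cap Q_{2}$ and $P=\langle C\rangle$, the plane $P$ cannot lie in both $Q_{i}$ (else $P$ would be a divisor on $X$, against $\rho(X)=1$), so $Q_{1}\cap P$ is already a conic containing $C$, forcing $C=X\cap P$. Thus $C$ is a linear section of $X$, so $\I_{C/X}(1)$ and hence $\mcE(1)$ are globally generated; then $\xi+H$ is nef and $-K_{Y}=2(\xi+H)+H$ is ample, giving $(3)$ and $(2)$ at once. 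In other words, the global generation of $\mcE(1)$ --- which instantly excludes your unbalanced conic, since a globally generated $\mcE(1)|_{B}$ has no quotient of negative degree --- comes for free from the geometry of conics on $X$, and this is precisely the ingredient your cone analysis is missing. The shortest repair of your approach is to import this observation rather than to pursue the extremal-ray argument.
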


Thirdly, we characterize weak Fano bundles $\mcE$ with $(c_{1}(\mcE),c_{2}(\mcE))=(0,2)$ or $(0,3)$ as follows. 
\begin{thm}\label{thm-F-chara}
Let $X$ be a del Pezzo $3$-fold of degree $4$. 
Let $\mcE$ be a rank $2$ vector bundle on $X$. 
Then the following assertions are equivalent. 
\begin{enumerate}
\item $\mcE$ is weak Fano with $(c_{1}(\mcE),c_{2}(\mcE))=(0,2)$ (resp. $(0,3)$). 
\item $\mcE$ is (vi) (resp. (vii)) in Theorem~\ref{mthm}. 
\end{enumerate}
\end{thm}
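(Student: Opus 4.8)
The plan is to prove the two stated equivalences---the one with $c_{2}=2$ and the one with $c_{2}=3$---simultaneously, establishing each implication. Since being weak Fano and the shape of the bundles in (vi)--(vii) are unchanged by twisting, I may assume $\mcE$ is normalized, so $c_{1}(\mcE)=0$; the relevant twist is then $\mcE(1)$, which has $\det\mcE(1)=\mcO_{X}(-K_{X})$ and $\mcE^{\vee}\cong\mcE$. Writing $\xi$ for the tautological class of $\P_{X}(\mcE(1))=\P_{X}(\mcE)$, one has $-K_{\P_{X}(\mcE)}=2\xi$, and a Segre class computation with $H_{X}^{3}=4$ gives $\xi^{4}=c_{1}(\mcE(1))^{3}-2\,c_{1}(\mcE(1))\,c_{2}(\mcE(1))=16-4c_{2}(\mcE)$, which is $8$ when $c_{2}=2$ and $4$ when $c_{2}=3$. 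Hence $-K_{\P_{X}(\mcE)}$ is automatically big once it is nef, so ``$\mcE$ is weak Fano'' reduces to ``$\mcE(1)$ is nef'', and the argument will in fact show this is equivalent to ``$\mcE(1)$ is globally generated''.

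\textbf{Proof of $(2)\Rightarrow(1)$.} Start from $0\to\mcO_{X}(-1)\to\mcE\to\I_{C}(1)\to 0$, equivalently $0\to\mcO_{X}\xrightarrow{s}\mcE(1)\to\I_{C}(2)\to 0$. First one checks $\Ext^{1}(\I_{C}(1),\mcO_{X}(-1))\cong\mathbb{C}$: via the local-to-global spectral sequence this follows from $H^{1}(\mcO_{X}(-2))=H^{2}(\mcO_{X}(-2))=0$ (Kodaira, since $\mcO_{X}(-2)=K_{X}$) together with $\mathcal{E}xt^{1}(\I_{C},\mcO_{X}(-2))\cong\omega_{C}\cong\mcO_{C}$, the last isomorphism because $C$ is a locally complete intersection curve of codimension two that is elliptic; the same computation shows that the unique nontrivial extension is locally free, its extension class being a nowhere-vanishing section of $\mathcal{E}xt^{1}\cong\mcO_{C}$ (Serre's construction). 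A Chern class computation gives $(c_{1}(\mcE),c_{2}(\mcE))=(0,\deg C-4)$, which is $(0,2)$ if $\deg C=6$ and $(0,3)$ if $\deg C=7$. Next, $\I_{C/X}(2)$ is globally generated: for $\deg C=6$, $C$ is an elliptic normal sextic in $\P^{5}$ whose homogeneous ideal is generated by quadrics by classical theory, while for $\deg C=7$ this is exactly the hypothesis on $C$. Since $H^{1}(\mcO_{X})=0$, pushing through the displayed sequence shows $\mcE(1)$ is globally generated, so $\xi$ is base-point free, hence nef, and by the computation above $-K_{\P_{X}(\mcE)}=2\xi$ is nef and big; thus $\mcE$ is weak Fano.

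\textbf{Proof of $(1)\Rightarrow(2)$.} Suppose $\mcE$ is weak Fano with $(c_{1},c_{2})=(0,2)$ (resp.\ $(0,3)$). By Theorem~\ref{thm-I1}(2), $h^{0}(\mcE)=0$, so $\mcE$ is stable. Riemann--Roch gives $\chi(\mcE(1))=12-2c_{2}\in\{8,6\}$, and since $\xi=-\tfrac{1}{2}K_{\P_{X}(\mcE)}=K_{\P_{X}(\mcE)}+3\xi$ with $3\xi$ nef and big, Kawamata--Viehweg vanishing on $\P_{X}(\mcE)$ gives $H^{i}(X,\mcE(1))\cong H^{i}(\P_{X}(\mcE),\xi)=0$ for $i>0$, so $h^{0}(\mcE(1))=\chi(\mcE(1))>0$. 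A general section $s$ of $\mcE(1)$ cannot vanish along a divisor (that would yield a nonzero section of $\mcE(1-k)$, hence of $\mcE$, for some $k\geq1$), so its zero locus $C$ is a purely one-dimensional locally complete intersection fitting in $0\to\mcO_{X}\xrightarrow{s}\mcE(1)\to\I_{C}(2)\to 0$. Then $\deg C=H_{X}\cdot c_{2}(\mcE(1))=c_{2}+4\in\{6,7\}$, and $\mcN_{C/X}\cong\mcE(1)|_{C}$ together with adjunction gives $\omega_{C}\cong(\omega_{X}\otimes\det\mcE(1))|_{C}\cong\mcO_{C}$, so $p_{a}(C)=1$. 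Twisting the sequence by $\mcO_{X}(-1)$ and using $h^{0}(\mcE)=0$ and $H^{1}(\mcO_{X}(-1))=0$ (Kodaira) shows $h^{0}(\I_{C/X}(1))=0$, so $C$ is non-degenerate in $\P^{5}$. It then remains to show that $C$ is smooth and irreducible---hence a smooth elliptic curve---and that $\mcE(1)$ is globally generated; granting these, $\I_{C/X}(2)$ is globally generated, i.e.\ $C$ is defined by quadrics, and $\mcE$ is of type (vi) (resp.\ (vii)). (When $c_{2}=2$ one may shortcut: $\mcE$ is then a minimal instanton bundle, so $\mcE(1)$ is a special Ulrich bundle, and one invokes Beauville's classification of rank two special Ulrich bundles on del Pezzo threefolds.)

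\textbf{The main obstacle.} The hard part is the last step of the second implication: for a weak Fano $\mcE$, showing that the zero locus $C$ of a general section of $\mcE(1)$ is smooth and that $\mcE(1)$ is globally generated. Weak Fanoness gives directly only that $\mcE(1)$ is nef (from $-K_{\P_{X}(\mcE)}=2\xi$), and the usual route via Castelnuovo--Mumford regularity is unavailable, since for $c_{2}=3$ the bundle $\mcE(1)$ is not $0$-regular (indeed $h^{1}(\mcE)\geq1$ there, as the resulting $C$ is a non-projectively-normal elliptic septic). Instead one must use the geometry of the weak Fano fourfold $\P_{X}(\mcE)$: it has Picard rank two, hence two extremal contractions, namely the $\P^{1}$-bundle $\pi\colon\P_{X}(\mcE)\to X$ and the birational anticanonical contraction $\phi\colon\P_{X}(\mcE)\to Y$. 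Controlling $\phi$ by a $2$-ray game---in particular identifying the curves it contracts, which lie over the lines and conics of $X$ on which $\mcE(1)$ is not a priori globally generated (the jumping lines of $\mcE$, on which weak Fanoness already forces $\mcE|_{L}=\mcO_{L}(1)\oplus\mcO_{L}(-1)$ at worst)---is what rules out a reducible or singular $C$ and upgrades nefness of $\mcE(1)$ to global generation. This is precisely where the hypothesis in case (vii) is genuinely needed: without it $C$ need not be cut out by quadrics and $\mcE$ fails to be weak Fano, cf.\ Remark~\ref{rem-nonweakfano}.
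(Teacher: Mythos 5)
Your setup is sound and your direction $(2)\Rightarrow(1)$ is essentially the paper's argument (global generation of $\I_{C}(2)$ from \cite{Hulek} in the sextic case and from the hypothesis in the septic case, then $-K_{\P_{X}(\mcE)}=2\xi$ nef, and $\xi^{4}=16-4c_{2}>0$ for bigness). But in $(1)\Rightarrow(2)$ there is a genuine gap at exactly the point you flag as ``the main obstacle'': you never actually prove that $\mcE(1)$ is globally generated, nor that the zero scheme $C$ of a general section is smooth and connected. The paragraph proposing a $2$-ray game on the fourfold $\P_{X}(\mcE)$ --- identifying the anticanonical contraction $\phi$, controlling jumping lines, etc. --- is a plan, not an argument; no step of it is carried out, and it is not clear it would close (for instance, nefness only gives $\mcE|_{L}\in\{\mcO_{L}^{\oplus2}(-1)\otimes\mcO_{L}(1),\,\mcO_{L}(1)\oplus\mcO_{L}(-1)\}$ on a line, which by itself does not rule out a base curve of $|\xi|$). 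Your proposed shortcut for $c_{2}=2$ has the same hidden gap: to see that $\mcE$ is a minimal instanton (equivalently that $\mcE(1)$ is special Ulrich) you need $h^{1}(\mcE(-1))=0$, and since $\chi(\mcE(-1))=0$ and Serre duality gives $h^{1}(\mcE(-1))=h^{2}(\mcE(-1))$, this vanishing is not automatic --- in the paper it is deduced from connectedness of $C$, which again rests on global generation.

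The paper closes this gap by a quite different mechanism (Theorem~\ref{F-thm-glgen} together with Corollary~\ref{cor-instanton}). Since $-K_{\P_{X}(\mcE(1))}=(\rk)\,\xi$, the fourfold is a weak Mukai manifold; one takes a smooth ladder down to a K3 surface $S$ (Theorem~\ref{thm-Mella}, via Shin, Mella, Minagawa), uses Saint-Donat's description of linear systems on K3 surfaces to write $\xi|_{S}\sim gB+\Gamma$ with $\Gamma=\Bs|\xi|$ a smooth rational curve and $B$ an elliptic fiber, proves via degree bounds on elliptic curves (Lemma~\ref{lem-degell1}, i.e.\ Hartshorne--Serre plus Bogomolov) that $\Gamma$ must be contracted by $\pi$, and then a blow-up computation along $\Gamma$ identifies the anticanonical image as a join containing $\P^{1}\times\P^{2}$, forcing $\mcF\simeq\mcO^{\oplus r-2}\oplus\mcO(\frac{-K_{X}}{2})^{\oplus2}$ and $X$ of degree $1$ --- impossible here, so $\mcE(1)$ is globally generated. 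Smoothness of $C$ is then Bertini, and connectedness follows from $\deg C<2d$ plus the lower bound $\deg\geq d$ on components. If you want to complete your write-up, you need to either reproduce an argument of this strength or genuinely execute the $2$-ray game you sketch; as it stands the implication $(1)\Rightarrow(2)$ is not proved.
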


Then by Theorems~\ref{thm-I1}, \ref{thm-H}, and \ref{thm-F-chara}, we obtain the characterization part of Theorem~\ref{mthm}. 

Finally, we show the existence of $\mcE$ for each condition in Theorem~\ref{mthm}, which is namely the following theorem. 
\begin{thm}\label{thm-existence}
For an arbitrary del Pezzo $3$-fold of degree $4$, there exist examples for each one of (i) -- (vii) in Theorem~\ref{mthm}. 
\end{thm}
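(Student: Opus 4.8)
The plan is to construct, on an arbitrary smooth complete intersection $X\subset\P^{5}$ of two quadrics, one bundle in each of the classes (i)--(vii). For (v), (vi) and (vii) it suffices to exhibit the relevant object, since Theorems~\ref{thm-H} and~\ref{thm-F-chara} then guarantee it is weak Fano; for (i)--(iv) the weak Fano property is checked by hand. Types (i)--(iii): the split bundles $\mcO_X\oplus\mcO_X(-1)$, $\mcO_X\oplus\mcO_X$ and $\mcO_X(1)\oplus\mcO_X(-1)$ exist trivially, and one verifies directly that the anticanonical class of $\P_X(\mcE)$, which is $2\xi_{\mcE}+\pi^{*}(-K_X-c_1(\mcE))$ for the tautological class $\xi_{\mcE}$, is nef and big and contracts only finitely many curves. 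Type (v): the pencil of quadrics through $X$ has non-vanishing discriminant (a binary sextic, because $X$ is smooth), hence a smooth member $Q\subset\P^{5}$; the restriction to $X$ of a spinor bundle on $Q$ is of type (v), and Theorem~\ref{thm-H} both identifies it and establishes its weak Fano property — indeed that theorem already records the existence of examples for (v).

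Type (iv): $X$ contains a line $L$ (classically, the Hilbert scheme of lines on $X$ is a non-empty surface; see e.g.\ \cite{Fanobook}), and Serre duality together with the ideal sequence of $L$ gives
\[
\Ext^{1}_{X}(\I_{L},\mcO_{X})\;\cong\;H^{2}(X,\I_{L}(-2))^{\vee}\;\cong\;H^{1}(\P^{1},\mcO_{\P^{1}}(-2))^{\vee}\;\cong\;\C ,
\]
so the non-split extension $0\to\mcO_X\to\mcE\to\I_L\to 0$ is unique; it is locally free by Serre's criterion, since $\det\mcE=\mcO_X$ and $\omega_L\cong\mcO_{\P^{1}}(-2)\cong\omega_X|_L$, and one checks that $\P_X(\mcE)$ is weak Fano. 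Type (vi): a rank-two special Ulrich bundle $\mcF$ exists on every del Pezzo threefold of degree $\ge 3$ \cite{Beauville} — equivalently, a non-degenerate smooth elliptic sextic $C\subset X$ with $\mcF$ the unique non-split extension of $\I_C(2)$ by $\mcO_X$ — so that $\mcE:=\mcF(-1)$ is the non-split extension of $\I_C(1)$ by $\mcO_X(-1)$ with $(c_1(\mcE),c_2(\mcE))=(0,2)$, hence of type (vi) and weak Fano by Theorem~\ref{thm-F-chara}.

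The substantive case is (vii): one must exhibit, on the given $X$, a non-degenerate smooth elliptic curve $C\subset X$ of degree $7$ that is defined by quadratic equations. Granting such a $C$, the group $\Ext^{1}_{X}(\I_C(1),\mcO_X(-1))\cong H^{1}(C,\mcO_C)^{\vee}\cong\C$ is one-dimensional, so there is a unique non-split extension
\[
0\longrightarrow\mcO_X(-1)\longrightarrow\mcE\longrightarrow\I_C(1)\longrightarrow 0 ;
\]
after twisting by $\mcO_X(1)$, Serre's criterion applies since $\det(\mcE(1))=\mcO_X(2)$ and $\omega_C\cong\mcO_C\cong(\omega_X\otimes\mcO_X(2))|_C$, so $\mcE$ is locally free with $(c_1(\mcE),c_2(\mcE))=(0,3)$, hence of type (vii) and weak Fano by Theorem~\ref{thm-F-chara}. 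To produce $C$ I would construct it concretely — for instance as a member of an elliptic pencil on a suitable K3 surface section $S=X\cap Q'\in|2H_X|$ whose Picard lattice contains a class $f$ with $f^{2}=0$ and $f\cdot(H_X|_S)=7$, or by a liaison inside $X$, or using the auxiliary genus-$2$ curve naturally attached to $X$ — and then verify, from the resolution of $C$ (or the liaison sequences), that $h^{0}(\I_C(2))$ has the value $7$ expected for a $2$-normal septic and that these seven quadrics have no residual base locus. One also reads off that $C$ is linearly normal only in $\P^{6}$, so $C\subset\P^{5}$ is a genuine projection, not projectively normal, consistently with $\mcE(1)$ being globally generated but not $0$-regular.

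The hard part is to carry the construction of $C$ out uniformly over \emph{all} smooth complete intersections of two quadrics, and to control the ``defined by quadratic equations'' condition, which is not open among nearby septics and can genuinely fail (cf.\ Remark~\ref{rem-nonweakfano}); it is precisely this condition that separates the weak Fano bundles of type (vii) from the remaining instanton bundles with $c_2=3$. I would establish the uniformity by a dimension count on the incidence variety of pairs $(X,C)$, showing its projection to the family of all degree-$4$ del Pezzo threefolds is dominant and --- after a separate analysis at the finitely many $X$ with an exceptional Hilbert scheme of lines or conics --- surjective; alternatively, one writes down one explicit such $C\subset X$ in coordinates for a representative of each stratum and spreads it out in a flat family. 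Combined with the constructions above, this produces an example in each of (i)--(vii) on an arbitrary $X$.
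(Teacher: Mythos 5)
Your treatment of (i)--(vi) is fine and matches the paper, which likewise disposes of these cases by citing split bundles, lines, conics and the spinor bundle, and Beauville's special Ulrich bundles. The problem is case (vii), which you correctly identify as the substantive one but do not actually prove: everything after ``To produce $C$ I would construct it concretely'' is a list of candidate strategies (an elliptic pencil on a K3 section, liaison, the auxiliary genus-$2$ curve, a dimension count on an incidence variety, explicit coordinates on each stratum), none of which is carried out, and you yourself flag the two obstacles --- uniformity over \emph{every} smooth $X$ and control of the ``defined by quadratic equations'' condition --- without resolving either. In particular, the incidence-variety approach only yields dominance, and your proposed fix (``a separate analysis at the finitely many $X$ with an exceptional Hilbert scheme'') is an unsubstantiated assertion, not an argument. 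So the proof has a genuine gap exactly where the paper's main new content lies.

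The paper closes this gap with two ideas absent from your proposal. First (Proposition~\ref{7-stability}, proved via Mukai's technique on $\Bl_{C}\P^{5}$), a non-degenerate septic elliptic curve $C\subset\P^{5}$ is defined by quadratic equations if and only if $C$ has \emph{no trisecant lines}; this converts the non-obviously-open cohomological condition you worry about into a condition that visibly passes to general members of a flat family. Second, on an arbitrary $X$ one produces such a $C$ by \emph{smoothing}: choose a smooth conic $\Gamma\subset X$ and a quintic elliptic curve $C_{0}$ in a suitable hyperplane section meeting $\Gamma$ transversally in one point so that $C_{0}\cup\Gamma$ has no trisecants (Lemma~\ref{lem-constCG}), check $H^{1}(\mcN_{C_{0}/X}(-x_{1}))=H^{1}(\mcN_{\Gamma/X})=0$ so that $C_{0}\cup\Gamma$ is strongly smoothable in $X$ by Hartshorne--Hirschowitz, and observe that a general smoothing is a connected septic elliptic curve still without trisecants, hence defined by quadrics. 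This works uniformly on every $X$ because it only needs a conic and a quintic elliptic curve on a general hyperplane section, not any genericity of $X$ itself. Without the trisecant characterization and the smoothing construction (or a genuine substitute), your argument for (vii) --- and hence for the theorem --- is incomplete.
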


Then Theorem~\ref{mthm} follows from Theorems~\ref{thm-I1}, \ref{thm-H}, \ref{thm-F-chara}, and \ref{thm-existence}.

\subsection{Outline of proof of our results}

\subsubsection{Outline for Theorem~\ref{thm-I1}}

Let $X$ be a del Pezzo $3$-fold of degree $4$. 
Theorem~\ref{thm-I1}~(1) is a characterization of weak Fano bundles of rank $2$ on $X$ which is not slope semi-stable (cf. \cite[Lemma~3.2]{ishi}). 
For proving Theorem~\ref{thm-I1}~(2), 
we will bound the second Chern classes of such an $\mcE$ by using the inequality $(-K_{\P_{X}(\mcE)})^{4}>0$ and the Le Potier vanishing (cf. \cite[Theorem~7.3.5]{laz2}). 
By this argument, we characterize slope stable weak Fano bundles of rank $2$ in terms of its Chern classes.

\subsubsection{Outline for Theorems~\ref{thm-H} and \ref{thm-F-chara}}

The most non-trivial part of Theorems~\ref{thm-H} and \ref{thm-F-chara} is 
to show the global generation of $\mcE(1)$
for a rank $2$ weak Fano bundle $\mcE$ with $(c_{1},c_{2}) \in \{(-1,2),(0,2),(0,3)\}$. 
Once we prove the global generation, 
we can produce a smooth curve $C$ 
as the zero scheme of a general global section of $\mcE(1)$ 
and hence $\mcE$ can be written as a unique extension as in (v), (vi), (vii) in Theorem~\ref{mthm}. 

To show Theorem~\ref{thm-H}, we will use the fact that $h^{0}(\mcE(1))>0$ for every normalized weak Fano bundle $\mcE$ of rank $2$ (=Proposition~\ref{bounds}~(1)). 
When $(c_{1},c_{2})=(-1,2)$, the zero scheme of a global section of $\mcE(1)$ is a conic and hence we can show that $\mcE(1)$ is globally generated. 
Moreover, this fact implies that $\mcE$ is a Fano bundle, which is already classified in \cite{mos2}. 

To show Theorem~\ref{thm-F-chara}, 
we will prove the following theorem.
\begin{thm}\label{F-thm-glgen}
Let $X$ be a smooth Fano $3$-fold of Picard rank $1$. 
Let $\mcF$ be a nef vector bundle with $\det \mcF=\mcO(-K_{X})$ and $c_{1}(\mcF)^{3}-2c_{1}c_{2}(\mcF)+c_{3}(\mcF) > 0$. 

If $\mcF$ is not globally generated, then $X$ is a del Pezzo $3$-fold of degree $1$ and $\mcF \simeq \mcO_{X}^{\oplus \rk \mcF-2} \oplus \mcO_{X}(\frac{-K_{X}}{2})^{\oplus 2}$. 
\end{thm}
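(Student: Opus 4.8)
The plan is to analyze the non-global-generation locus of a nef vector bundle $\mcF$ with $\det\mcF = \mcO(-K_X)$ via the evaluation map and a Cayley--Bacharach type argument. First I would set $r = \rk\mcF$ and consider the evaluation morphism $\mathrm{ev}\colon H^0(X,\mcF)\otimes\mcO_X \to \mcF$. If $\mcF$ is globally generated there is nothing to prove, so assume its image is a proper subsheaf; the quantity $c_1(\mcF)^3 - 2c_1c_2(\mcF) + c_3(\mcF)$ is precisely (up to the standard normalization) the leading Chern number $(-K_{\P_X(\mcF)})^{\dim}$-type invariant, so the hypothesis that it is positive is the ``weak Fano with positive top self-intersection'' condition that one expects to force strong positivity. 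I would first reduce to $r=2$: since $\det\mcF = \mcO(-K_X)$ is fixed and $\mcF$ is nef, a general quotient construction (or repeatedly splitting off trivial summands using that a nef bundle with a nowhere-vanishing section splits off $\mcO_X$) should let me peel off $\mcO_X^{\oplus r-2}$ and reduce to a rank $2$ nef bundle $\mcF'$ with $\det\mcF' = \mcO(-K_X)$ and the same Chern-number inequality. This is where I expect to have to be careful, because peeling off trivial summands requires knowing $h^0(\mcF) \geq r-2$ with independent sections vanishing nowhere, which itself uses nefness and Riemann--Roch on a Fano threefold of Picard rank $1$.

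For the rank $2$ case, suppose $\mcF'$ is not globally generated. Pick a point $x \in X$ not in the image locus, i.e. where $\mathrm{ev}_x$ is not surjective; then either $\mcF'$ has no section through a full fiber, or the base locus has positive dimension. The key step is to run the standard ``$\mcF'(1)$ has a section vanishing along a curve'' argument backwards: by Serre duality and the vanishing $H^i(X,\mcO_X(K_X)) = 0$ for $i<3$, together with nefness of $\mcF'$ (hence $H^i(X,\mcF'(t)) = 0$ for $t \gg 0$ and Le Potier-type vanishing as invoked in the outline for Theorem \ref{thm-I1}), I would compute $\chi(\mcF')$ and $\chi(\mcF'(-1))$ explicitly in terms of $d = (-K_X)^3/8$ (equivalently the index-$2$ degree) and the $c_i$. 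The failure of global generation produces a subscheme $Z \subset X$ (the locus where sections fail to generate) of codimension $\leq 2$, and a Cayley--Bacharach relation among the Chern numbers; combined with the positivity of $c_1^3 - 2c_1c_2 + c_3$ and the nef inequalities $c_1 \cdot \ell \geq 0$, $c_2 \cdot \ell' \geq 0$ for curves $\ell$ and surfaces, this should pin down $c_2(\mcF')$ to a single small value. Pushing this through, the only numerically consistent possibility should be $c_2(\mcF') = c_1(\mcF')^2/4$ with $c_1(\mcF') = -K_X$ of ``square'' type, which via the splitting criterion for rank $2$ bundles on Fano threefolds (a bundle $\mcF'$ with $\mcF' \simeq \mcF'^\vee \otimes \det\mcF'$ and $c_2$ extremal splits) forces $\mcF' \simeq \mcO_X(\tfrac{-K_X}{2})^{\oplus 2}$. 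This in turn forces $(\tfrac{-K_X}{2})^3 = 1$, i.e. $X$ has index-$2$ degree $1$.

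Concretely, the main technical engine will be the following chain: (1) nefness of $\mcF'$ gives $\mcO_{\P_X(\mcF')}(1)$ nef, so $H_X' := \mcO_{\P_X(\mcF')}(1)$ and the pullback of $-K_X$ generate a nef cone, and the Chern-number hypothesis says the anticanonical of the projective bundle has positive top self-intersection; (2) a non-globally-generated nef bundle has $\mathrm{Bs}|\mcO_{\P_X(\mcF')}(1)| \neq \emptyset$, and the contraction associated to $\mcO_{\P_X(\mcF')}(1)$ is then not an isomorphism onto its image, producing a curve $\Gamma$ on which $\mcO_{\P_X(\mcF')}(1)$ is trivial; (3) projecting $\Gamma$ to $X$ and intersecting against the nef classes gives the numerical constraint. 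The hard part will be step (2)--(3): translating ``not globally generated'' into a clean numerical statement about $\Gamma$ without already knowing the geometry of $X$. I expect to need the precise form of the Fano threefold Riemann--Roch (using that $\Pic X = \Z\cdot\tfrac{-K_X}{2}$ and the classification of such threefolds by degree $d \in \{1,\dots,5\}$) to rule out $d \geq 2$, and to treat $d = 1$ by exhibiting that there the bundle $\mcO_X^{\oplus r-2}\oplus\mcO_X(\tfrac{-K_X}{2})^{\oplus 2}$ is indeed nef but not globally generated because $|\tfrac{-K_X}{2}|$ has a base point. Once $d = 1$ is isolated, the reverse splitting argument (any nef $\mcF'$ of rank $2$, $\det = -K_X$, with the extremal $c_2$ is $\mcO_X(\tfrac{-K_X}{2})^{\oplus 2}$) completes the proof after adding back the trivial summands peeled off at the start.
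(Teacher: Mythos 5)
Your proposal has genuine gaps at every load-bearing step. First, the reduction to rank $2$ by ``peeling off trivial summands'' does not work as stated: a nowhere-vanishing section of a nef bundle gives an exact sequence $0\to\mcO_X\to\mcF\to\mcQ\to 0$ with locally free quotient, but this extension need not split, and in any case you would first have to prove that $\mcF$ has $\rk\mcF-2$ sections whose degeneracy locus is empty --- which is essentially the global generation question you are trying to answer. The paper never reduces to rank $2$; it cuts $\P_X(\mcF)$ by $\rk\mcF-1$ general members of the tautological system to obtain a weak Fano $3$-fold that is the blow-up of $X$ along a smooth curve $C$ of positive genus, and then by one more member to obtain a K3 surface $S$. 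Second, your argument is circular where it matters most: you invoke $\Pic X=\Z\cdot\tfrac{-K_X}{2}$ and the classification of del Pezzo $3$-folds by degree $d\in\{1,\dots,5\}$, but the hypothesis is only that $X$ is a Fano $3$-fold of Picard rank $1$; that $X$ has index $2$ (and degree $1$) is the \emph{conclusion}. Your scheme gives no mechanism for excluding the index-$1$ Fano $3$-folds, $\P^3$, or the quadric.

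Third, the ``Cayley--Bacharach relation among the Chern numbers'' that is supposed to pin down $c_2$ to an extremal value, and the splitting criterion ``extremal $c_2$ implies split,'' are not actual arguments --- no such clean numerical criterion is available here, and the failure of global generation of a nef bundle is not detected by Chern numbers alone. The paper's proof needs three substantial inputs that your outline does not supply: (i) the smooth-ladder theorem for weak Mukai manifolds (Theorem~\ref{thm-Mella}, resting on Mella, Shin, Shokurov, Minagawa), which is what legitimizes passing to the smooth K3 surface $S$ even though $|\xi|$ has base points; (ii) Saint-Donat's structure theory for non-base-point-free linear systems on K3 surfaces, giving $\xi|_S\sim gB+\Gamma$ with $B$ an elliptic fiber and $\Gamma$ a rational section, so that $\Bs|\xi|=\Gamma$; and (iii) the delicate numerical step (using the lower bound $(-K_X).\ol{B}\geq\tfrac14(-K_X)^3$ for elliptic curves, via Hartshorne--Serre and Bogomolov) showing that $\Gamma$ is \emph{contracted} by $\pi$, i.e.\ is a line in a fiber. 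Only then does the blow-up along $\Gamma$, the computation $\mcN_{\Gamma/M}\simeq\mcO(1)^{\oplus\rk\mcF-2}\oplus\mcO^{\oplus 3}$, and the $\Delta$-genus-zero identification of the image force $V\simeq X\times\P^1$ and hence the splitting $\mcF\simeq\mcO_X^{\oplus\rk\mcF-2}\oplus\mcO_X(\tfrac{-K_X}{2})^{\oplus 2}$. You correctly sense that the hard part is converting non-global-generation into geometry of a base curve $\Gamma$, but the proposal stops exactly where that work begins.
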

Applying this result, we see that $\mcE(1)$ is globally generated for every weak Fano bundle $\mcE$ with $c_{1}(\mcE)=0$ on a del Pezzo $3$-fold of degree $4$, 
which will imply Theorem~\ref{thm-F-chara}. 
As another corollary of Theorem~\ref{F-thm-glgen}, we will also see that every weak Fano bundle $\mcE$ with $\det \mcE=\mcO_{X}$ on a del Pezzo $3$-fold $X$ of Picard rank $1$ is an instanton bundle (=Corollary~\ref{cor-instanton}). 

Our proof of Theorem~\ref{F-thm-glgen} goes as follows. 
Let $X$ be a Fano $3$-fold of Picard rank $1$ and $\mcF$ a vector bundle with $c_{1}(\mcF)=c_{1}(X)$. 
Let $\pi \colon M:=\P_{X}(\mcF) \to X$ be the projectivization and $\xi$ a tautological divisor. 
Assume that $\Bs|\xi| \neq \emp$ and $\xi$ is nef and big. 
Since $-K_{M}=(\rk \mcF)\xi$, $M$ is a weak Mukai manifold. 
Then by using some results about ladders of Mukai varieties \cite{Shokurov,Isk79,Reid,Shin,Mella,Minagawa}, 
we can obtain a smooth K3 surface $S \subset M$ by taking general $(\rk \mcF)$ elements of $|\xi|$ (=Theorem~\ref{thm-Mella}). 
Since the complete linear system $|\xi|_{S}|$ still has a non-empty base locus, 
there is an elliptic curve $B$ and a smooth rational curve $\Gamma$ on $S$ such that $\xi|_{S} \sim gB+\Gamma$, where $g=(1/2)\xi^{\rk\mcF+2}+1 > 0$ \cite{SD74,Shokurov}. 
Note that $\Bs|\xi|=\Gamma$. 

Let us suppose that $\Gamma$ is contracted by $\pi \colon M \to X$. 
Then it is easy to see that $\Gamma$ is a line in a fiber of $\pi$. 
As in \cite[$\S$~6]{Isk79}, we consider the blowing-up of $M$ along $\Gamma$. 
Since $\Gamma$ is a line of a $\pi$-fiber, 
we can compute the conormal bundle of $\Gamma$ and hence conclude that the anti-canonical image of $\Bl_{\Gamma}M$ is the join of $\P^{1} \times \P^{2}$ and some linear space. 
This structure enables us to show $X$ is a del Pezzo $3$-fold of degree $1$ and $\mcF \simeq \mcO_{X}(\frac{-K_{X}}{2})^{\oplus 2} \oplus \mcO_{X}^{\oplus \rk\mcF-2}$. 
The most technical part of this proof is to show that $\Gamma$ is contracted by $\pi \colon M \to X$. 
For more precise, see $\S$~\ref{subsubsec-GammaContracted}.

\subsubsection{Outline for Theorem~\ref{thm-existence}}

Fix an arbitrary smooth complete intersection $X$ of two hyperquadrics in $\P^{5}$. 
The existence of (i), (ii) and (iii) in Theorem~\ref{mthm} is obvious. 
For the existence of (iv), (v), (vi), and (vii) in Theorem~\ref{mthm}, it suffices to find a line, a conic, a non-degenerate sextic elliptic curve, and a non-degenerate septic elliptic curve defined by quadratic equations respectively. 
Indeed, if such a curve is found, the Hartshorne-Serre correspondence gives a vector bundle which we want to find. 
The existence of lines and conics are obvious since a general hyperplane section, which is a del Pezzo surface, contains such curves. 
The existence of a non-degenerate sextic elliptic curve was already proved by \cite[Lemma~6.2]{Beauville}. 
Note that such a curve is always defined by quadratic equations (cf. \cite{Hulek}). 

The remaining case is the existence of a septic elliptic curve defined by quadratic equations. 
Note that there is a non-degenerate septic elliptic curve in $\P^{5}$ defined by quadratic equations (cf. \cite[Theorem~3.3]{Crauder-Katz}, \cite[$\S$~2]{Hulek-Katz-Schreyer}). 
Hence an example of a del Pezzo $3$-fold of degree $4$ containing such a curve was already known. 
In this article, we will slightly improve this known result by showing that \emph{every} del Pezzo $3$-fold of degree $4$ contains such a curve.

To obtain this result, we study a non-degenerate septic elliptic curve $C$ in $\P^{5}$ and 
show that $C$ is defined by quadratic equations if and only if 
$C$ has no trisecants (=Proposition~\ref{7-stability}). 
For this characterization, we will use Mukai's technique \cite{Mukai}, which interprets the property of being defined by quadratic equations into the vanishing of the 1st cohomologies of a certain family of vector bundles on $C$. 
Then as in \cite[Lemma~6.2]{Beauville} and \cite{Hartshorne-Hirschowitz}, we will produce such a curve $C$ by smoothing the union of a conic and an elliptic curve of degree $5$, which has no trisecants. 
For more precise, see $\S$~\ref{subsec-Smoothing}.

\subsection{Organization of this article}
In $\S$~2, we give certain numerical conditions for rank $2$ weak Fano bundles over del Pezzo $3$-folds of degree $4$ and prove Theorem~\ref{thm-I1}. 
In $\S$~3, we prove Theorem~\ref{thm-H}. 
In $\S$~4, we prove Theorem~\ref{F-thm-glgen} and Theorem~\ref{thm-F-chara} as its corollary. 
In $\S$~5, we find a septic elliptic curve $C$ on an arbitrary del Pezzo $3$-fold of degree $4$ and prove Theorem~\ref{thm-existence}.

\begin{NaC}
Throughout this article, we will work over the complex number field. 
We regard vector bundles as locally free sheaves. 
For a vector bundle $\mcE$ on a smooth projective variety $X$, 
we define $\P_{X}(\mcE):=\Proj \Sym \mcE$. 
In this article, we only deal with slope stability among the stabilities. 
For a given ample divisor $H$, we say that $\mcE$ is $H$-stable (resp. $H$-semi-stable) if $\frac{c_{1}(\mcF)}{\rk \mcF}.H^{\dim X-1} < (\text{resp. }\leq) \frac{c_{1}(\mcE)}{\rk \mcE}.H^{\dim X-1}$ for every subsheaf $0 \neq \mcF \subsetneq \mcE$. 
When $X$ is of Picard rank $1$, we abbreviate $H$-stable (resp. $H$-semi-stable) to simply stable (resp. semi-stable). 

We say that $\mcE$ is \emph{weak Fano} if $\P_{X}(\mcE)$ is a weak Fano manifold, that is, $-K_{\P_{X}(\mcE)}$ is nef and big. 
When $X$ is a smooth Fano $3$-fold of Picard rank $1$, 
we often identify $H^{2i}(X,\Z) \simeq \Z$ by taking an effective generator. 
We also say a rank $2$ vector bundle $\mcE$ on $X$ is \emph{normalized} when $c_{1}(\mcE) \in \{0,-1\}$. 
\end{NaC}

\begin{ACK}
The first author would like to show his gratitude to Professors Yoshinori Gongyo and Hiromichi Takagi for valuable discussions and warm encouragement. 
The second and third authors are grateful to Professors Hajime Kaji and Yasunari Nagai for fruitful discussions, helpful comments, and their encouragement. 
We are grateful to Doctor Akihiro Kanemitsu for his suggestions and comments. 
We also are grateful to the referee for careful reading and fruitful comments. 
The first author was partially supported by JSPS Research Fellowship for Young Scientists (JSPS KAKENHI Grant Number 18J12949). 
The second author was supported by JSPS Research Fellowship for Young Scientists (JSPS KAKENHI Grant Number 17J00857). 
\end{ACK}

\section{Numerical bounds: Proof of Theorem~\ref{thm-I1}}\label{sec-I1}
In this section, $X$ denotes a smooth del Pezzo $3$-fold of degree $4$ and $H_{X}$ a class of a hyperplane section. 
Let us recall that $H^{2}(X,\Z)$ is generated by $H_{X}$ and $H^{4}(X,\Z)$ is generated by $H_{X}^{2}/4$. 
If we naturally identify $H^{2}(X,\Z)$ as $\Z$ and $H^{4}(X,\Z)$ as $\Z$ by these generators, 
we can take integers $c_{1},c_{2}$ as $c_{1}(\mcE)=c_{1} \cdot H_{X}$ and $c_{2}(\mcE)=c_{2} \cdot H_{X}^{2}/4$ for a given vector bundle $\mcE$. 
We also call this integer $c_{i}$ the $i$-th Chern class of $\mcE$ for each $i \in \{1,2\}$. 
Let $\pi \colon \P_{X}(\mcE) \to X$ be the projectivization, $\xi$ the tautological line bundle, and $H:=\pi^{\ast}H_{X}$. 
In this setting, we have $-K_{\P_{X}(\mcE)}=2\xi+(2-c_{1})H$, $H^{4}=0$, and $\xi.H^{3}=4$.

The goal of this section is to prove Theorem~\ref{thm-I1}. 
First of all, it is easy to check that the vector bundle $\mcE=\mcO_{X} \oplus \mcO_{X}(a)$ with $a \in \Z_{\geq 0}$ is weak Fano if and only if $a \in \{0,1,2\}$ since $\xi$ and $H$ generates the nef cone of $\P_{X}(\mcE)$ and $-K_{\P_{X}(\mcE)}=2\xi+(2-a)H$. 
This means that, for a rank $2$ normalized bundle $\mcE$ on $X$, the conditions (i-iii) in Theorem~\ref{mthm} are equivalent to $\mcE$ being a decomposable weak Fano bundle. 
Thus, to prove Theorem~\ref{thm-I1} for a given rank $2$ weak Fano normalized bundle $\mcE$ on $X$, 
we will characterize $\mcE$ with global sections as the condition (iv) in Theorem~\ref{mthm}, and compute the Chern classes of $\mcE$ with no global sections. 

\begin{lem}\label{lem-comp}
Let $X$ be a del Pezzo $3$-fold of degree $4$ and $\mcE$ a rank $2$ weak Fano normalized bundle with Chern classes $c_{1},c_{2} \in \Z$. 

Then we obtain the following assertions. 
\begin{enumerate}
\item For any integer $j$, 
\begin{align}\label{eq-RR}
\chi( \mcE (j)) = 
\left\{
\begin{array}{ll}
\frac{4}{3}j^3+2j^2+\frac{16-6c_{2}}{6}j-\frac{1}{2}c_{2}+1
& \text{ if } c_{1} = -1 \text{ and } \\
\frac{4}{3}j^{3}+ 4j^{2} + \frac{14-3c_{2}}{3}j -c_{2}+2
& \text{ if } c_{1} = 0. 
\end{array}
\right.
\end{align}\label{lem-comp-RR}
\item When $c_1=-1$, we have ${\xi}^{2}.H^{2}=-4$, ${\xi}^{3}.H=-c_2+4$, and ${\xi}^{4}=2c_{2}-4$. 
When $c_{1}=0$, we have ${\xi}^{2}.H^{2}=0$, ${\xi}^{3}.H=-c_{2}$, and ${\xi}^{4}=0$. 
\label{lem-comp-IN}
\item For any $i \geq 2$ and $j \geq 0$, we have $H^{i}(X,\mcE (j)) =0$. \label{lem-comp-LP}
\end{enumerate}
\end{lem}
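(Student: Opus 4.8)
The plan is to treat the three assertions in the order (1), then (2), then (3), since (1) feeds into (2) via the projection formula and Grothendieck--Riemann--Roch, and the Le~Potier vanishing in (3) is logically independent but uses the same geometric setup. For (1), I would apply Hirzebruch--Riemann--Roch on the threefold $X$. Recall that on a del Pezzo $3$-fold of degree $4$ one has $H_X^3 = 4$, $-K_X = 2H_X$, and the Chern classes of $X$ are determined: $c_1(X) = 2H_X$, $c_2(X)\cdot H_X = 24 - (\text{something})$ — more precisely $c_2(X)\cdot H_X$ is pinned down by $\chi(\mcO_X)=1$ and Noether's formula, giving $c_2(X)\cdot H_X = H_X^3 + \text{(correction)}$; in any case $\chi(\mcO_X(j)) = \tfrac{2}{3}j^3 + 2j^2 + \tfrac{7}{3}j + 1$ (this is the standard Hilbert polynomial of the degree-$4$ del Pezzo $3$-fold). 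Then for a rank~$2$ bundle $\mcE$ with $c_1(\mcE)=c_1$, $c_2(\mcE)=c_2$, I expand
\[
\chi(\mcE(j)) = \int_X \ch(\mcE)\cdot\ch(\mcO_X(j))\cdot\td(X),
\]
using $\ch(\mcE) = 2 + c_1 + \tfrac{1}{2}(c_1^2 - 2c_2) + \cdots$. Substituting $c_1 \in \{0,-1\}$, together with $c_1^2 = 0$ or $4$ (as a multiple of $L_X$, using $H_X^2 = 4 L_X$ appropriately normalized) and $c_2 = c_2 L_X$, and collecting powers of $j$, should reproduce the two displayed polynomials exactly. This is a routine but careful computation; the only subtlety is getting the normalization of $H_X^2$, $H_X\cdot L_X$ consistent with the identification $H^4(X,\Z)\simeq\Z[L_X]$ stated in the introduction.

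For (2), I would use the standard relation on $M = \P_X(\mcE)$: the tautological divisor $\xi$ satisfies the Grothendieck relation $\xi^2 - \pi^*c_1(\mcE)\cdot\xi + \pi^*c_2(\mcE) = 0$, i.e. $\xi^2 = c_1 H\xi - c_2 F$ where $F = \pi^* L_X$ is the class of $\pi$ over a point and $H = \pi^*H_X$. Combined with the projection formula $\pi_*\xi = 1$ (and $\pi_*1 = 0$, $\pi_*\xi^j$ computed inductively), together with the intersection numbers $H_X^3 = 4$, $H_X^2\cdot L_X = ?$ on $X$, the four numbers $\xi^2H^2$, $\xi^3 H$, $\xi^4$ follow by direct substitution — reducing everything to $\pi_*$ of powers of $\xi$ against pullbacks from $X$. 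Concretely: $\xi^2 H^2 = (c_1 H\xi - c_2 F)H^2 = c_1 \xi H^3 - c_2 F H^2 = c_1\cdot 4 - 0$ when we interpret $FH^2 = 0$ and $\xi H^3 = H_X^3 = 4$; for $c_1 = -1$ this gives $-4$, for $c_1 = 0$ it gives $0$, matching the claim. The remaining two intersection numbers are obtained by iterating the Grothendieck relation once or twice more and using $\pi_*(\xi F) = [\text{pt}]$, $\xi^2 F = c_1 HF\xi - c_2 F^2 = 0$ on the fiber. I expect this step to be essentially bookkeeping.

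For (3), the Le~Potier vanishing theorem (in the form of \cite[Theorem~7.3.5]{laz2}) states that for a nef vector bundle $\mcE$ of rank $r$ on a smooth projective $n$-fold $X$ and an ample line bundle $\mcL$, $H^i(X, K_X \otimes \mcL \otimes \mcE \otimes \det\mcE) = 0$ for $i > 0$ — or more usefully the Griffiths-type statement $H^i(X, K_X \otimes S^k\mcE \otimes \det\mcE \otimes \mcL) = 0$. The key input is that a weak Fano bundle $\mcE$ — more precisely a suitable twist of it — is nef: since $\P_X(\mcE)$ is weak Fano, $-K_{\P_X(\mcE)} = 2\xi - \pi^*(K_X + \det\mcE) = 2\xi + \pi^*(2H_X + c_1 H_X)$ (using $-K_X = 2H_X$) is nef, hence $\xi$ twisted by an appropriate pullback is nef, which translates into nefness of $\mcE(m)$ for suitable $m$. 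Then I would write $\mcE(j) = K_X \otimes (\text{nef bundle}) \otimes (\text{ample line bundle})$ for $j \geq 0$ — since $K_X = \mcO_X(-2)$, one needs the ample twist to absorb $\mcO_X(2)$ and still leave a nef piece when $j\geq 0$ — and conclude $H^i = 0$ for $i\geq 2$. The potential obstacle here, and the one genuinely delicate point of the whole lemma, is checking that the precise twist of $\mcE$ appearing is nef (not merely that some twist is), i.e. pinning down exactly which $\mcO_X(1)$-twist of $\mcE$ is the nef one given only that $\P_X(\mcE)$ is weak Fano with $\mcE$ normalized. I would handle this by noting $\xi$ itself need not be nef, but $\xi + \pi^*H_X$ (equivalently $\mcE(1)$) is: indeed $2(\xi + \pi^*H_X) = -K_M + \pi^*((4+c_1)H_X) \cdot(\text{up to sign checks})$ — so I'd verify the coefficient carefully — is a sum of a nef and an ample pullback, hence nef and big, forcing $\xi + \pi^*H_X$ nef; then $\mcE(1)$ is nef, so $S^k\mcE(k)$ is nef, and for $j \geq 0$ one applies Le~Potier to $K_X \otimes S^k(\mcE(1)) \otimes \det(\mcE(1)) \otimes \mcO_X(\text{ample})$ after reorganizing $\mcE(j)$ — with $\rk = 2$ the relevant statement reduces to a Kodaira–Nakano-type vanishing for the nef–big divisor $\xi + \pi^*H_X$ pushed down, giving $H^i(\mcE(j)) = 0$ for $i \geq 2$, $j \geq 0$.

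Among these, part (3) — specifically identifying the correct nef twist and matching it to the hypotheses of Le~Potier vanishing — is the main obstacle; parts (1) and (2) are routine Riemann--Roch and intersection-theory calculations on the projective bundle.
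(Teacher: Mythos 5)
Parts (1) and (2) of your proposal are correct and essentially identical to the paper's argument: Hirzebruch--Riemann--Roch with $\chi(\mcO_X(j))=\tfrac{2}{3}j^3+2j^2+\tfrac{7}{3}j+1$ (your determination $c_2(X).H_X=12$ from $\chi(\mcO_X)=1$ is right), and for (2) the Grothendieck relation $\xi^2=c_1\xi.H-\tfrac{c_2}{4}H^2$ together with $\xi.H^3=4$ and $H^4=0$; your sample computations of $\xi^2H^2$, $\xi^3H$, $\xi^4$ all check out.

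Part (3), however, contains a genuine error at exactly the point you flagged as delicate. The canonical bundle formula is $-K_{\P_X(\mcE)}=2\xi-\pi^{*}(K_X+\det\mcE)=2\xi+(2-c_1)H$, not $2\xi+(2+c_1)H$ as you wrote. With the correct sign your key claim that $\xi+H$ (equivalently $\mcE(1)$) is nef fails for $c_1=-1$: there $-K_{\P_X(\mcE)}=2\xi+3H$, so $2(\xi+H)=-K_{\P_X(\mcE)}-H$ is a nef divisor \emph{minus} an ample one, and nefness of $\xi+H$ simply does not follow from the weak Fano hypothesis at this stage (it is eventually true for the surviving bundles, but only via the global generation established in Theorem~\ref{thm-H}, which is not available here). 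The repair --- and the route the paper takes --- is to aim for \emph{ampleness of $\mcE(2)$} rather than nefness of $\mcE(1)$: since $\xi+\tfrac{2-c_1}{2}H=\tfrac{1}{2}(-K_{\P_X(\mcE)})$ is nef, one has $\xi+(2+j)H=\tfrac{1}{2}(-K_{\P_X(\mcE)})+\bigl(j+\tfrac{2+c_1}{2}\bigr)H$, a nef $\Q$-divisor plus a positive multiple of the ample $H$, hence ample for every $j\geq 0$ and $c_1\in\{0,-1\}$. Then the plain Le Potier vanishing of \cite[Theorem~7.3.5]{laz2} --- $H^{i}(X,\omega_X\otimes\mcF)=0$ for $i\geq\rk\mcF$ when $\mcF$ is ample, with \emph{no} $\det\mcF$ factor --- applied to the rank-$2$ ample bundle $\mcF=\mcE(2+j)$ and $\omega_X=\mcO_X(-2)$ gives $H^{i}(\mcE(j))=H^{i}(\omega_X\otimes\mcE(2+j))=0$ for $i\geq 2$. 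Note that the statements you quote (with a $\det\mcE$ or $S^k\mcE$ factor) are Griffiths-type vanishings and would land on the wrong twist of $\mcE$; the version without the determinant is the one needed.
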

\begin{proof}
(1) follows from the Hirzebruch-Riemann-Roch formula. 
(2) follows from the relation ${\xi}^{2}-c_1\xi.H+\frac{c_2}{4}H^{2}=0$ (cf.~\cite[Appendix A]{ha})
and the fact that $H^{4}=0$ and $\xi.H^3=4$ on $\P_{X}(\mcE)$. 
Let us prove (3). 
Since $-K_{\P_{X}(\mcE)}=2\xi + (2-c_{1})H$ is nef and big, $\mcE (2)$ is ample. 
Thus the Le Potier vanishing theorem \cite[Theorem~7.3.5]{laz2}
gives $H^{i}(X,\mcE (j)) = H^{i}(X,\omega_{X} \otimes \mcE (2+j)) = 0$ for any $i \geq 2$ and $j \geq 0$. 
We complete the proof. 
\end{proof}

%%%%%%%%%%%%%%%
%%%%%%%%%%%%%%%
%%%%%%%%%%%%%%%

Following \cite[Section~1.1]{mos1}, 
we introduce the invariant $\beta:=\min\{n \in \Z \mid h^{0}(\mcE(n))>0\}$ for a given vector bundle $\mcE$. 
Then $\mcE$ has $\mcO(-\beta)$ as a subsheaf, which is saturated by the definition of $\beta$. 
Hence we have the following exact sequence
\begin{align}\label{ex-beta}
0 \to \mcO(-\beta) \to \mcE \to \mcI_{Z}(c_{1}+\beta) \to 0,
\end{align}
where $Z$ is a closed subscheme of $X$ of purely codimension $2$ or the empty set. 
This exact sequence (\ref{ex-beta}) means that $c_{2}(\mcE(\beta)) = [Z]$ belongs an effective class. 
% which implies 
%\begin{align}\label{ineq-beta}
%[Z]=c_{2}(\mcE(\beta))=(c_{2}+4c_{1}\beta+4\beta^{2}) \cdot \frac{H_{X}^{2}}{4} \in H^{4}(X,\Z).
%\end{align}

Now we bound $c_{2}$ and $\beta$ for a normalized weak Fano bundle of rank $2$ as follows.

\begin{prop}\label{bounds}
Let $X$ be a del Pezzo $3$-fold of degree $4$. 
Let $\mcE$ be a rank $2$ normalized weak Fano bundle on $X$ with Chern classes $c_1$ and $c_2$. 
Set $\beta:=\min\{n \in \Z \mid H^{0}(X,\mcE(n)) \neq 0\}$. 
Then we obtain the following assertions. 
\begin{enumerate}
\item $-1-c_1 \leq \beta \leq 1$.
\item If $c_{1}=-1$, then $c_{2} \leq 2$. If $c_{1} = 0$, then $c_{2} \leq 3$. 
\end{enumerate}
\end{prop}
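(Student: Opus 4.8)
The plan is to argue throughout on the fourfold $M:=\P_X(\mcE)$, with $\pi\colon M\to X$ the projection, $\xi$ a tautological divisor, $H:=\pi^{\ast}H_X$, and $-K_M=2\xi+(2-c_1)H$ (cf. the proof of Lemma~\ref{lem-comp}). A first, crude bound on $c_2$ comes from bigness: since $\mcE$ is weak Fano, $-K_M$ is nef and big, so $(-K_M)^4>0$, and expanding this product with the intersection numbers of Lemma~\ref{lem-comp}~(2) gives $(-K_M)^4=64(5-c_2)$ when $c_1=-1$ and $(-K_M)^4=64(4-c_2)$ when $c_1=0$; hence $c_2\le 4$ in the former case and $c_2\le 3$ in the latter. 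Moreover $\chi(\mcE)=1-\tfrac12 c_2$ is an integer by Lemma~\ref{lem-comp}~(1), so $c_2$ is even when $c_1=-1$. In particular assertion~(2) is already proved when $c_1=0$, and it remains to bound $\beta$ and to sharpen $c_2\le 4$ to $c_2\le 2$ when $c_1=-1$.

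Next I would establish the bounds on $\beta$ in~(1). For the upper bound, suppose $\beta\ge 2$, so $H^0(X,\mcE(1))=0$; since also $H^i(X,\mcE(1))=0$ for $i\ge 2$ by Lemma~\ref{lem-comp}~(3), we get $\chi(\mcE(1))=-h^1(X,\mcE(1))\le 0$. But by Lemma~\ref{lem-comp}~(1), $\chi(\mcE(1))$ equals $7-\tfrac32 c_2$ when $c_1=-1$ and $12-2c_2$ when $c_1=0$, so $c_2>4$ (resp. $c_2>3$), contradicting the crude bound above; hence $\beta\le 1$. For the lower bound, suppose $\beta\le -2-c_1$. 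Then $H^0(X,\mcE(-2-c_1))\neq 0$, so there is an inclusion $\mcO_X(2+c_1)\hookrightarrow\mcE$, and replacing it by its saturation we obtain a line subbundle $\mcO_X(b)\subset\mcE$ with $b\ge 2+c_1\ge 1$ and $\mcE/\mcO_X(b)\cong\I_W(c_1-b)$ for a closed subscheme $W\subset X$ of codimension $\ge 2$. Restricting to a general line $L\subset X$ (which misses $W$) gives an extension $\mcE|_L\cong\mcO_L(b)\oplus\mcO_L(c_1-b)$, and the section of $\P_L(\mcE|_L)\subset M$ corresponding to the quotient onto $\mcO_L(c_1-b)$ is a curve $\ell_0$ with $\xi\cdot\ell_0=c_1-b$ and $H\cdot\ell_0=1$; thus $-K_M\cdot\ell_0=2(c_1-b)+(2-c_1)=c_1+2-2b\le -(c_1+2)<0$, contradicting the nefness of $-K_M$. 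Hence $\beta\ge -1-c_1$, which finishes~(1).

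Finally, to improve $c_2\le 4$ to $c_2\le 2$ when $c_1=-1$, the key point is to test the nefness of $-K_M$ against the effective divisor $\xi+H$ and not only against $-K_M$ itself. By~(1) we have $H^0(X,\mcE(1))\neq 0$, and since $\pi_{\ast}\mcO_M(\xi+H)\cong\mcE(1)$ the linear system $|\xi+H|$ on $M$ is nonempty; fix an effective divisor $S\in|\xi+H|$. Since $-K_M$ is nef, so is its restriction to every subvariety, and the top self-intersection of a nef line bundle on a projective threefold is $\ge 0$; applying this to the components of $S$ gives $(-K_M)^3\cdot S=(-K_M)^3\cdot(\xi+H)\ge 0$. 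A direct expansion of $(2\xi+3H)^3(\xi+H)$ with Lemma~\ref{lem-comp}~(2) gives $(-K_M)^3\cdot(\xi+H)=108-28c_2$, whence $c_2\le 3$, and since $c_2$ is even we conclude $c_2\le 2$. Combined with the first paragraph, this proves~(2).

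I do not anticipate a serious obstacle once the right test divisors are in hand: the delicate points are only the sign-and-coefficient bookkeeping in the two intersection-number computations and, in the lower bound for $\beta$, verifying that a general line avoids the codimension-$\ge 2$ locus $W$ so that the restricted extension genuinely splits off the quotient $\mcO_L(c_1-b)$. The one genuinely nonobvious ingredient, which is what I would think about hardest, is the idea in the $c_1=-1$ case of using $(-K_M)^3\cdot(\xi+H)\ge 0$ — legitimate precisely because $\beta\le 1$ makes $\xi+H$ effective — in place of the weaker inequality $(-K_M)^4>0$.
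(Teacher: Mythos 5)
Your proposal is correct, and its overall architecture coincides with the paper's: the crude bound $c_2\le 4$ (resp.\ $\le 3$) from $(-K_M)^4>0$, the upper bound $\beta\le 1$ from $\chi(\mcE(1))>0$ together with Le Potier vanishing, the parity of $c_2$ from $\chi(\mcE)=1-\tfrac12 c_2\in\Z$, and the refinement to $c_2\le 2$ from $0\le(-K_M)^3\cdot(\xi+H)=108-28c_2$ are exactly the paper's steps, with matching intersection numbers. The one place you diverge is the lower bound $\beta\ge -1-c_1$: the paper gets it in one line from Serre duality and Lemma~\ref{lem-comp}~(3), namely $0=h^3(\mcE)=h^0(\mcE^{\vee}(-2))=h^0(\mcE(-c_1-2))$, whereas you saturate a destabilizing sub-line-bundle and restrict to a line to exhibit a curve on which $-K_M$ is negative. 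Your route is also valid (and the worry about the line meeting $W$ is harmless, since the torsion-free quotient of $\I_W(c_1-b)|_L$ only has smaller degree, which strengthens the contradiction), but it needs the existence of lines on $X$ and the factoriality argument for the saturation; the paper's duality argument is shorter given that Lemma~\ref{lem-comp}~(3) is already in hand.
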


\begin{proof}
We use the same notation as in Lemma~\ref{lem-comp}. 
Note that 
\begin{align}\label{eq-c2-tmp}
c_{2} \leq 4 \text{ if } c_{1}=-1 \text{ and } c_{2} \leq 3 \text{ if } c_{1}=0
\end{align}
follow from $0 < (-K_{\P_{X}(\mcE)})^{4}=\begin{cases}
       64(-c_2+5) & \mathrm{if}\ c_1=-1\\
       64(-c_2+4) & \mathrm{if}\ c_1=0
\end{cases}
$. 
Here we use Lemma~\ref{lem-comp}~(\ref{lem-comp-IN}) for computing $(-K_{\P_{X}(\mcE)})^{4}$. 

(1) Let us show $-1-c_{1} \leq \beta \leq 1$. 
Since $\mcE^{\vee}\simeq \mcE(-c_1)$, 
we have $0=h^{3}(X,\mcE) = h^{0}(X,\mcE^{\vee}(-2)) = h^{0}(X,\mcE(-c_{1}-2))$ by Lemma~\ref{lem-comp}~(\ref{lem-comp-LP}), 
which implies $\beta \geq -1-c_{1}$. 
On the other hand, by (\ref{eq-RR}), we have 
\[
\chi(\mcE(1))=\begin{cases}
		7 - \frac{3}{2}c_{2}
		& \mathrm{if}\ c_1=-1\\
		12-2c_{2}      
		& \mathrm{if}\ c_1=0. 
\end{cases}
\]
Then (\ref{eq-c2-tmp}) implies $\chi(\mcE (1))>0$. 
Hence
$h^0(\mcE(1)) \geq \chi(\mcE(1)) > 0$ by
Lemma~\ref{lem-comp}~(\ref{lem-comp-LP}), which implies $\beta \leq 1$. 

(2) By (\ref{eq-c2-tmp}), it suffices to show $c_{2} \leq 2$ when $c_{1}=-1$. 
Since $h^{0}(\mcE(1))>0$, we have 
$0 \leq (-K_{\P_{X}(\mcE)})^{3}(\xi+H) = 108-28c_{2}$. 
Moreover, we have $\chi(\mcE)=(1/2)(2-c_{2})$ by Lemma~\ref{lem-comp}~(\ref{lem-comp-RR}). 
Hence $c_{2}$ is an even integer, which implies $c_{2} \leq 2$. 
\end{proof}

\begin{proof}[Proof of Theorem~\ref{thm-I1}]
Let $X$ be a del Pezzo $3$-fold of degree $4$. 
Let $\mcE$ be a normalized rank $2$ weak Fano bundle on $X$. 

%(1) Suppose that $\beta \leq 0$, which implies that $\mcE$ is not slope stable. 
(1) 
It is easy to see that if $\mcE$ is one of (i-iv) in Theorem~\ref{mthm}, then $h^{0}(\mcE)>0$, which implies $\beta \leq 0$. 
Hence it suffices to show the converse direction. 

Suppose that $\beta \leq 0$. 
If $\beta < 0$, then by Proposition~\ref{bounds}~(1), we have $c_{1}=0$ and $\beta=-1$. 
Since $-K_{\P_{X}(\mcE)}=2\xi+2H$ is nef, $\mcE(1)$ is nef. 
Thus $\mcI_{Z}$ in the exact sequence (\ref{ex-beta}) is a quotient sheaf of a nef bundle $\mcE(1)$, which implies $Z=\emp$. Hence $\mcE=\mcO_{X}(1) \oplus \mcO_{X}(-1)$. 

Hence we may assume $\beta=0$. 
Then we have $[Z] \sim c_{2}(\mcE)$ for the closed subscheme $Z$ which appears in (\ref{ex-beta}). 
Hence, if $c_{2}=0$, then $Z = \emp$, which implies $\mcE \simeq \mcO \oplus \mcO(c_{1})$, i.e., $\mcE$ belongs to (i) or (ii). 

Hence we may assume that $c_{2}>0$. 
Since $\beta = 0$, $\mcE$ has a global section, which implies that $\xi$ is linearly equivalent to an effective divisor. 
Then by Lemma~\ref{lem-comp}~(\ref{lem-comp-IN}), we have 
\[0 \leq \xi.\left(-K_{\P_{X}(\mcE)} \right)^{3}=\begin{cases}
       -20c_2+4 & \mathrm{if}\ c_1=-1\\
       8(-3c_2+4) & \mathrm{if}\ c_1=0
\end{cases}
. \]
Since $c_{2}>0$, we have $c_{1}=0$ and $c_{2}=1$. Hence $Z$ is a line on $X$. 
Then the exact sequence (\ref{ex-beta}) gives the description (iv) in Theorem~\ref{mthm} (cf. \cite[Lemma~3.2]{ishi}). 
Hence $\mcE$ is one of (i-iv) in Theorem~\ref{mthm}. 
We complete the proof of (1). 
Here we note that the extension $0 \to \mcO_{X} \to \mcE \to \mcI_{L} \to 0$ for an arbitrary line $L$ on $X$ is uniquely determined by $L$ since $\Ext^{1}(\mcI_{L},\mcO_{X}) \simeq H^{2}(\mcI_{L} \otimes \omega_{X})^{\vee} \simeq H^{1}(\omega_{X}|_{L})^{\vee} \simeq \C$ by the Serre duality and the exact sequence $0 \to \mcI_{L} \otimes \omega_{X} \to \omega_{X} \to \omega_{X}|_{L} \to 0$.

(2) First we show that $(c_{1},c_{2}) \in \{(-1,2),(0,2),(0,3)\}$ assuming $\beta>0$. 
In this case, we have $\chi(\mcE) \leq 0$ by Lemma~\ref{lem-comp}~(\ref{lem-comp-LP}). 
If $c_{1}=-1$, then $\chi(\mcE)=(1/2)(2-c_{2})$ by  Lemma~\ref{lem-comp}~(\ref{lem-comp-RR}) and hence $c_{2}=2$ by Proposition~\ref{bounds}. 
If $c_{1}=0$, then $\chi(\mcE)=2-c_{2}$ and hence $c_{2} \in \{2,3\}$ by Proposition~\ref{bounds}. 
The inverse direction holds by (1). 
We complete the proof. 
\end{proof}

\section{Characterization of slope stable weak Fano bundles with $c_{1}=-1$: Proof of Theorem~\ref{thm-H}}\label{sec-H}
We devote this section to the proof of Theorem~\ref{thm-H}. 

\begin{proof}[Proof of Theorem~\ref{thm-H}]
Let $X$ be a del Pezzo $3$-fold of degree $4$. 
We will show the implications (2) $\ra$ (1), (1) $\ra$ (3), and (3) $\ra$ (2) in Theorem~\ref{thm-H}. 

The implication (2) $\ra$ (1) is obvious. 

We show the implication (1) $\ra$ (3). 
Let $\mcE$ be a weak Fano bundle with $(c_1, c_2) = (-1, 2)$. 
Then by Theorem~\ref{thm-I1} and Proposition~\ref{bounds}, we have 
$h^{0}(\mcE(1))>0$ and $h^{0}(\mcE)=0$. 
Let $s \in H^0(X, \mcE(1))$ be a non-zero section and $C = Z(s)$ the zero locus of the section $s$. 
Then we have an exact sequence
\begin{align}\label{ex-Hara}
 0 \to \mcO_X \xrightarrow{s} \mcE(1) \to \mcI_{C/X}(1) \to 0. 
\end{align}
By the Riemann-Roch theorem and the additivity of the Euler character,
we have that the Hilbert polynomial of $C$ is $2t+1$, where $t$ is a variable. 
Hence $C$ is a conic curve that may be singular. 

By the above argument, 
in order to show that $\mcE$ is (v) in Theorem~\ref{mthm}, 
it is enough to show that $\mathcal{E}(1)$ is globally generated.
Let $P$ be the linear span of $C$ in $\mathbb{P}^5$.
Since $C$ is a conic, $P$ is a plane (i.e. two-dimensional linear subspace). 
Choose two smooth quadrics $Q_1$ and $Q_2$ such that $X = Q_1 \cap Q_2$.
Then at least one of $Q_1$ and $Q_2$ does not contain $P$.
Indeed, if both of $Q_1$ and $Q_2$ contain $P$, then $P$ is a divisor of $X$, 
which contradicts that $\Pic(X) \simeq \Z$ generated by a hyperplane section $H_{X}$. 
Therefore we may assume that the plane $P$ is not a subscheme of $Q_1$.
In this case, the intersection $Q_1 \cap P$ is a conic in $P$.
Since
$C \subset X \cap P \subset Q_{1} \cap P$, we notice that $C = X \cap P$.
This means that $C$ is defined by linear equations in $X$, and hence $\I_{C/X}(1)$ is globally generated.
Then the exact sequence (\ref{ex-Hara}) implies that $\mcE(1)$ is globally generated. 
We complete the proof of this implication (1) $\ra$ (3). 

Finally, we show the implication (3) $\ra$ (2). 
Let $\mcE$ be (v) in Theorem~\ref{mthm}, that is, 
$\mcE$ is the non-trivial extension of $\mcI_{C}$ by $\mcO_{X}(-1)$, 
where $C$ is a smooth conic. 
As we saw in the proof of the implication (1) $\ra$ (3), 
$\mcI_{C}(1)$ is globally generated and hence so is $\mcE(1)$. 
Since $\mathcal{E}(1)$ is globally generated, it is nef and hence $\xi + H$ is a nef divisor on $\mathbb{P}_X(\mathcal{E})$. 
%Here we use the notation of Lemma~\ref{lem-comp}.
Thus $-K_{\P_{X}(\mcE)}=2\xi+3H$ is ample, which means that $\mcE$ is a Fano bundle. 

Here we note that the extension (\ref{ex-Hara}) is uniquely determined for a smooth conic $C$ since $\Ext^{1}(\mcI_{C/X}(1),\mcO_{X}) \simeq H^{2}(\mcI_{C/X}(-1))^{\vee} \simeq H^{1}(\mcO_{X}(-1)|_{C})^{\vee} \simeq \C$ by the Serre duality and the exact sequence $0 \to \mcI_{C/X}(-1) \to \mcO_{X}(-1) \to \mcO_{X}(-1)|_{C} \simeq \mcO_{\P^{1}}(-2) \to 0$. 
We complete the proof of Theorem~\ref{thm-H}.
\end{proof}

%%%%%%%%%%%%%%%%%%%%%%%%%%%%%%
%%%%%%%%%%%%%%%%%%%%%%%%%%%%%%
%%%%%%%%%%%%%%%%%%%%%%%%%%%%%%

\section{Global generation of nef bundles with $c_{1}=-K_{X}$}

The main purpose of this section is to prove Theorem~\ref{F-thm-glgen} and Theorem~\ref{thm-F-chara}. 

\subsection{Ladders of weak Mukai manifolds}
For proving Theorem~\ref{F-thm-glgen}, we will use the existence of a smooth ladder on a weak Mukai manifold, which is nothing but the following theorem. 

\begin{thm}\label{thm-Mella}\cite{Mella,Shin}
Let $n \geq 3$ and $M$ an $n$-dimensional weak Mukai manifold, i.e., $M$ is an $n$-dimensional weak Fano manifold with a nef big divisor $A$ such that $-K_{M} \sim (n-2)A$. 
Then $|A|$ has a smooth member. 
Moreover, if $\Bs |A|$ is not empty, then $\Bs|A|$ is a smooth rational curve. 
\end{thm}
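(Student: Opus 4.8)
The plan is to argue by induction on $n$, reducing the problem to the case of a K3 surface via a \emph{ladder}
\[
M = M_{n} \supset M_{n-1} \supset \cdots \supset M_{2},
\]
where each $M_{k-1}$ is a general member of $|A_{k}|$ (with $A_{k}:=A|_{M_{k}}$) and $M_{2}$ is a K3 surface. The inductive claim at each step is: a general $M_{k-1}\in|A_{k}|$ is smooth, the pair $(M_{k-1},A_{k-1})$ is again a weak Mukai manifold of dimension $k-1$ (a K3 surface when $k-1=2$), and $\Bs|A_{k}|=\Bs|A_{k-1}|$. Granting this for every step, $\Bs|A|=\Bs|A_{2}|$ lies on the K3 surface $M_{2}$, on which $A_{2}$ is nef and big with $A_{2}^{2}=A^{n}$, an even integer $\geq 2$. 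By the classical analysis of linear systems on K3 surfaces of Saint-Donat and Shokurov \cite{SD74,Shokurov}, either $|A_{2}|$ is base point free or $A_{2}\sim gB+\Gamma$ for a smooth elliptic curve $B$ with $B^{2}=0$ and a smooth rational $(-2)$-curve $\Gamma$ with $B.\Gamma=1$ and $g=\tfrac12 A^{n}+1\geq 2$, in which case $\Bs|A_{2}|=\Gamma$. Transporting this back up the ladder yields that $\Bs|A|$ is either empty or equals the smooth rational curve $\Gamma$.

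For the base case $n=3$ one has $A\sim -K_{M}$, so $M$ is a smooth weak Fano threefold. By the theory of anticanonical systems on (weak) Fano threefolds \cite{Shokurov,Reid,Shin}, $|-K_{M}|$ contains a smooth member $S$, which is a K3 surface. Since $-K_{M}$ is nef and big, Kawamata--Viehweg vanishing gives $H^{1}(M,\mcO_{M})=0$, hence the restriction $H^{0}(M,-K_{M})\to H^{0}(S,(-K_{M})|_{S})$ is surjective and so $\Bs|-K_{M}|=\Bs|(-K_{M})|_{S}|$; as $(-K_{M})|_{S}$ is nef and big on the K3 surface $S$, the argument of the previous paragraph completes this case.

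For the inductive step $n\geq 4$, first note $|A|\neq\emp$: indeed $H^{i}(M,A)=0$ for $i>0$ by Kawamata--Viehweg (since $A-K_{M}\sim(n-1)A$ is nef and big), and $\chi(A)>0$ by Riemann--Roch. Pick a general $M'\in|A|$. By adjunction $K_{M'}=(K_{M}+A)|_{M'}=-(n-3)A|_{M'}$, and $A|_{M'}$ is nef with $(A|_{M'})^{n-1}=A^{n}>0$, hence nef and big; thus $(M',A|_{M'})$ is a weak Mukai manifold of dimension $n-1$ as soon as $M'$ is smooth. Again $H^{1}(M,\mcO_{M})=0$ makes $H^{0}(M,A)\to H^{0}(M',A|_{M'})$ surjective, so $\Bs|A|=\Bs|A|_{M'}|\subset M'$; and away from $\Bs|A|$ a general $M'$ is smooth by Bertini.

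The remaining, and genuinely delicate, point is to show that a general $M'\in|A|$ is smooth \emph{along} $\Bs|A|$ as well; this is where the work of Mella and Shin \cite{Mella,Shin} enters, and I expect it to be the \textbf{main obstacle}. The strategy there is to run the ladder, observe that $\Bs|A|$ is preserved, so that after finitely many steps it may be assumed already identified via the K3 slice as a smooth rational curve $\Gamma$ along which $\mcO_{M}(A)$ has small (controlled) restriction; one then analyzes $|A|$ in a neighborhood of $\Gamma$ --- using that on $M_{2}$ the curve $\Gamma$ is a $(-2)$-curve and the movable part $gB$ of $A_{2}$ is base point free --- to deduce a Bertini-type smoothness statement for the general member transverse to $\Gamma$. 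Once this local analysis near the base curve is in place, the induction closes and both assertions of the theorem follow.
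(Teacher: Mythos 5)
Your skeleton (ladder of general members of $|A|$ down to a K3 slice, then Saint-Donat/Shokurov on the K3, with $H^1(\mcO_M)=0$ giving surjectivity of restriction so the base locus is preserved) is the same skeleton the paper uses, but your proposal has a genuine gap exactly at the point you flag as the ``main obstacle'': nothing in your argument shows that a general member of $|A_k|$ is smooth \emph{along} $\Bs|A_k|$, and this is essentially the whole content of the theorem, so it cannot simply be deferred to a ``Bertini-type smoothness statement near $\Gamma$.'' Worse, the strategy you sketch is circular: you propose to identify $\Bs|A|$ as a smooth rational $(-2)$-curve ``via the K3 slice,'' but the K3 slice only exists once you already know the ladder members are smooth (or at least have controlled singularities) along the base locus, which is what you are trying to prove. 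Your induction therefore never gets off the ground for $n\geq 4$: if some general member is singular at a base point, the slice is not a smooth weak Mukai manifold and the inductive hypothesis does not apply.

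The paper closes this gap differently. It does \emph{not} assume the ladder members are smooth: by Mella's Theorem~2.3 (with Reid and Shokurov) one gets a ladder whose general members have only Gorenstein \emph{canonical} singularities, and one descends to the possibly singular weak Fano threefold $X=M_{n-3}$ with $\Bs|A|=\Bs|-K_X|$. Shin's Theorem~0.5, which applies to such singular threefolds, says $\dim\Bs|-K_X|\in\{0,1\}$; your proposal silently ignores the $0$-dimensional possibility, in which the base point is an ordinary double point of the K3 member and a singular point of $X$, so the naive K3 analysis does not rule it out. The paper excludes this case by a separate argument: passing to the anticanonical model $\psi\colon M\to\ol{M}$, showing (following Minagawa) that $\psi$ is an isomorphism at the base point, producing (following Mella's proof of his Theorem~2.5) a member of $|\ol{A}|$ smooth there, and deriving a contradiction with the singularity of $X$. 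In the remaining $1$-dimensional case, Shin's theorem itself provides both that $\Bs|-K_X|$ is a smooth rational curve and that $|-K_X|$ has a member smooth along it, which is what yields smoothness of a general member of $|A|$. So to repair your proof you would need to replace the vague local analysis near $\Gamma$ with these two inputs (a canonical-singularities ladder plus Shin's structure theorem and the exclusion of the point case), rather than a Bertini-type argument.
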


\begin{proof}
%[\textit{Proof (self-contained) .}]
If $|A|$ is base point free, then Theorem~\ref{thm-Mella} follows from Bertini's theorem. 
Hence we may assume that $\Bs|A| \neq \emp$. 
When $n=3$, this theorem is proved by Minagawa \cite[Theorem~3.1]{Minagawa}. 
Hence we may assume that $n \geq 4$. 

Let $\psi \colon M \to \ol{M}$ be the crepant birational contraction onto the anti-canonical model $\ol{M}$ of $M$, which is given by applying the Kawamata-Shokurov base point free theorem to $A$. 
Let $\ol{A}$ be an ample Cartier divisor on $\ol{M}$ such that $A=\psi^{\ast}\ol{A}$. 
Then by \cite[Theorem~2.3]{Mella}, a general member $\ol{D} \in |\ol{A}|$ has only canonical singularities. 
By the inversion of adjunction \cite[Theorem~5.50]{KM98}, the log pair $(\ol{M},\ol{D})$ is plt. 
Then $D:=\psi^{\ast}\ol{D}$ is irreducible and reduced since $\psi$ is birational. 
Since $K_{M}+D=\psi^{\ast}(K_{\ol{M}}+\ol{D})$,  $(M,D)$ is also plt. 
Hence by \cite[Corollary~5.52 and Theorem~5.50]{KM98}, $D$ is normal and has only log terminal singularities. 
Since $D$ is Gorenstein, $D$ has only canonical singularities. 
This argument holds even if $M$ has Gorenstein canonical singularities. 

Thus, by repeating this argument and using Reid's result \cite[Theorem]{Reid}, we have a ladder $M_{n-2} \subset M_{n-3} \subset \cdots \subset M_{1} \subset M_{0} = M$ such that each $M_{i+1} \in |A|_{M_{i}}|$ has only Gorenstein canonical singularities. 
%By using Mella's result \cite[Theorem~2.3]{Mella}, general members of $|A|$ has only canonical singularities. 
%Moreover, it follows from the same argument as \cite[Remark]{Mella} \footnote{ibid} that general member of $|A|$ has only terminal singularities. 
We may assume that $M_{i}$ is smooth on the outside of $\Bs|A|$ by Bertini's theorem. 
We set $S=M_{n-2}$ and $X=M_{n-3}$. 
Note that $X$ is a weak Fano $3$-fold such that $A|_{X}=-K_{X}$ and $\Bs|A|=\Bs|-K_{X}|$. 
By Shin's result \cite[Theorem~0.5]{Shin}, it is known that $\dim \Bs|-K_{X}| \in \{0,1\}$. 

Now we show $\dim \Bs|A| \neq 0$. 
If $\dim \Bs|A|=0$, then it also follows from 
%\cite[Theorem~0.5]{Shin}
[ibid.,Theorem~0.5] that $\Bs|A|=\Bs|-K_{X}|$ consist of a single point $p \in M$ and $p$ is an ordinary double point of $S$. 
Moreover, $p$ must be a singular point of $X$. 
Then by the same argument as \cite[Proof~of~Theorem~3.1]{Minagawa}, 
we can show that $\psi|_{S} \colon S \to \psi(S)$ is isomorphic at $p$. 
Since $S=\psi^{-1}(\psi(S))$, $\psi$ is isomorphic at $p$. 
In particular, $\ol{M}$ is smooth at $\ol{p}:=\psi(p)$. 
By the same argument as \cite[Proof~of~Theorem~2.5]{Mella}, 
there is a member $\ol{D} \in |\ol{A}|$ which is smooth at $\ol{p}$. 
Since $\psi$ is isomorphic at $p$, $D:=\psi^{-1}(\ol{D}) \in |A|$ is smooth at $p=\Bs|A|$. 
Hence general members of $|A|$ are smooth, 
which contradicts that $X=M_{n-3}$ is singular at $p$. 

Therefore, $\dim \Bs|-K_{X}|=\dim \Bs|A|=1$. 
Again by \cite[Theorem~0.5]{Shin}, $\Bs|-K_{X}|$ is a smooth rational curve and $|-K_{X}|$ has a member which is smooth along $\Bs|-K_{X}|$. 
Hence a general member of $|A|$ is smooth. 
We complete the proof. 
\end{proof}

\subsection{Lower bound of the degree of elliptic curves on Fano 3-folds}

As another preliminary for proving Theorem~\ref{F-thm-glgen}, we give a (non-optimal) lower bound of the anti-canonical degree of an elliptic curve on a Fano $3$-fold as follows. 
\begin{lem}\label{lem-degell1}
Let $X$ be a Fano $3$-fold of Picard rank $1$ and $B \subset X$ be an elliptic curve. 
Let $r(X):=\max \{r \in \Z_{>0} \mid -K_{X} \sim rH \text{ for some divisor } H \}$ be the Fano index of $X$. 
Then we obtain the following assertions. 
\begin{enumerate}
\item If $r(X)=1$, then $(-K_{X}).B \geq \frac{(1/2)(-K_{X})^{3}+3}{2}$. 
%$2g-2=(-K_{X})^{3}$
%$g=(1/2)(-K_{X})^{3}+1$
\item If $r(X) \leq 2$, then $(-K_{X}).B \geq \frac{1}{4}(-K_{X})^{3}$. 
\item If $r(X) \geq 3$, then $(-K_{X}).B \geq 12$. 
\end{enumerate}
\end{lem}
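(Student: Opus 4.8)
The plan is to use a case analysis based on the Fano index $r(X)$, combined with the classification of Fano $3$-folds and standard facts about linear systems on K3 surfaces and del Pezzo surfaces.

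First I would set up the following reduction. Write $-K_X \sim r H$ with $r = r(X)$ and $H$ the ample generator of $\Pic(X)$, and put $d := H^3$, so $(-K_X)^3 = r^3 d$. Since $B$ is an elliptic curve, $H.B \geq 1$, and the goal in each case is to bound $H.B$ from below. The key geometric input is that a general member $S \in |{-K_X}|$ (or, when it is base-point-free and gives a suitable structure, a general member of $|H|$) is a smooth surface containing $B$ after a suitable specialization, and one studies $B$ as a divisor class on $S$ via the Hodge index theorem. More precisely, for the index $1$ case I would take a smooth K3 surface $S \in |{-K_X}|$ (which exists by Shokurov's theorem, or directly since $X$ is a smooth Fano $3$-fold), and on $S$ the curve $B$ satisfies $B^2 \geq 0$ (it is elliptic, so $B^2$ is even and $B^2 = 2p_a(B) - 2 = 0$ if $B$ moves, but in any case one uses $B^2 \ge -2$ and adjusts). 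Applying the Hodge index theorem on $S$ to the classes $H|_S$ and $B$ gives $(H|_S . B)^2 \geq (H|_S)^2 \cdot B^2 = d \cdot B^2$; the cleanest route, though, is to use that $(-K_X)|_S = H|_S$ generates a rank-one sublattice and that $-K_X.B = H.B$ with $H.B \geq \lceil \sqrt{d}\,\rceil$-type bounds — here I would instead follow the standard argument bounding the genus of curves on a K3 surface: $-K_X.B = H.B$ and $2g(B) - 2 = B^2$ on $S$ forces, via Hodge index, $(H.B)^2 \geq d(2g(B)-2) = 0$ which is vacuous, so the real content must come from the \emph{three}-fold geometry, namely from $h^0(\mathcal{O}_X(H)) = \tfrac{d}{2} + 3$ (for index $1$, $g = \tfrac{d}{2}+1$ so $h^0(-K_X) = g+2$) and the fact that $B$ spans at most a $\mathbb{P}^{d/2}$-dimensional space inside the anticanonical embedding, giving $H.B = \deg B \geq$ (its codimension-one span dimension) $+ 1$. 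I would make this precise using that a non-degenerate curve of degree $e$ in $\mathbb{P}^N$ has $e \geq N$, combined with the projective normality / spanning properties of $X \subset \mathbb{P}^{g+1}$; for a curve contained in a linear section this yields $-K_X.B \geq \tfrac14(-K_X)^3 + 2$ after translating $N = g+1 = \tfrac{d}{2}+2$ into the stated bound.

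For the case $r(X) = 2$ (del Pezzo threefolds), a general $S \in |H|$ is a del Pezzo surface of degree $d$ with $-K_S = H|_S$, and $B \subset S$ is an elliptic curve with $B^2 \geq 0$. On a del Pezzo surface the Hodge index theorem applied to $-K_S$ and $B$ gives $(-K_S.B)^2 \geq (-K_S)^2 \cdot B^2 = d \cdot B^2$; since $B$ is elliptic, $B^2 = -K_S.B$ by adjunction ($2p_a(B)-2 = B^2 + K_S.B = 0$), so $(-K_S.B)^2 \geq d \cdot (-K_S.B)$, i.e. $-K_S.B \geq d$. Then $-K_X.B = 2H.B = 2(-K_S.B) \geq 2d = \tfrac14 (r^3 d) \cdot \tfrac{8}{r^3}\big|_{r=2} = \tfrac14(-K_X)^3 \cdot \tfrac{8}{8}$... let me state it cleanly: $-K_X.B = 2(H.B) \geq 2d$, while $\tfrac14(-K_X)^3 = \tfrac14 \cdot 8d = 2d$, giving exactly $(-K_X).B \geq \tfrac14(-K_X)^3$. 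The case $r = 1$ similarly gives the extra $+2$ because on a K3 surface the analogous adjunction computation yields $B^2 = 0$ and Hodge index gives $(H.B)^2 \ge 0$, which is weak; instead one uses $h^0$-counting as above, or, more robustly, the classification-free bound: one can always find a hyperplane section through $B$ and run the del-Pezzo-type argument on the blow-up structure — the $+2$ tracks the dimension of the linear span.

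For $r(X) \geq 3$: by the classification of Fano threefolds, $r(X) \in \{3, 4\}$, with $r = 4$ forcing $X = \mathbb{P}^3$ (so $-K_X = 4H$, $(-K_X)^3 = 64$, and any curve has $-K_X.B = 4\deg B \geq 4 \geq 12$? no — a line gives $4$, which is $< 12$, so $\mathbb{P}^3$ must be handled: but a line is rational, not elliptic, and the minimal degree of an elliptic space curve in $\mathbb{P}^3$ is $3$, a plane cubic, giving $-K_X.B = 12$), and $r = 3$ forcing $X = Q^3 \subset \mathbb{P}^4$ (so $-K_X = 3H$ with $H^3 = 2$; an elliptic curve on $Q^3$ has degree $\geq 4$ — e.g. an elliptic quartic curve, a $(2,2)$-complete intersection inside $Q^3$ — giving $-K_X.B = 3 \cdot 4 = 12$). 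So in both subcases the bound $(-K_X).B \geq 12$ holds, with equality realized. The main work here is just recalling/citing that the minimal degree of an elliptic curve in $\mathbb{P}^3$ is $3$ and in $Q^3 \subset \mathbb{P}^4$ is $4$, which follows from the fact that a non-degenerate curve of degree $e$ in $\mathbb{P}^N$ with $e < N + p_a$ ... more simply, from Castelnuovo-type bounds or direct inspection.

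I expect the main obstacle to be the $r(X) = 1$ case: unlike $r = 2$ and $r = 3$, the Hodge index argument on a K3 section is too weak on its own (it only gives $(H.B)^2 \geq 0$), so one genuinely needs to exploit either the linear-span / projective-normality of the anticanonical model $X \subset \mathbb{P}^{g+1}$ together with the classical bound $\deg(\text{non-degenerate curve}) \geq \dim(\text{span})$, or a more refined surface argument (e.g. that $B$ together with the polarization generates a sublattice of $\mathrm{NS}(S)$ whose discriminant controls $H.B$). Getting the precise constant $\tfrac14(-K_X)^3 + 2$ — rather than something slightly weaker — will require care about whether $B$ is linearly normal and whether it can be degenerate in the anticanonical embedding; since the lemma only claims a non-optimal bound, I would be willing to lose nothing here by arguing that a general hyperplane section of $X$ through a translate of $B$ is a del Pezzo surface in the index-$1$-minus-one sense, reducing to the surface computation above with the $+2$ coming from $h^0(H) - h^0(H|_B) - 1$.
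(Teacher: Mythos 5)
Your part (3) is fine and essentially identical to the paper's (Kobayashi--Ochiai reduces to $\P^{3}$ and $\Q^{3}$, where the minimal degrees of elliptic curves are $3$ and $4$). For parts (1) and (2), however, the paper does something quite different and your route has real gaps. The paper applies the Hartshorne--Serre correspondence to $B$ to get a rank $2$ bundle $\mcG$ with $0 \to \mcO_{X} \to \mcG \to \mcI_{B}(-K_{X}) \to 0$, so $c_{1}(\mcG)=-K_{X}$ and $c_{2}(\mcG)\equiv B$; since $\rho(X)=1$ and $r(X)\le 2$, any destabilizing line subbundle would force $h^{0}(\mcI_{B})\neq 0$, so $\mcG$ is slope semistable. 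Part (2) is then exactly the Bogomolov inequality $(c_{1}(\mcG)^{2}-4c_{2}(\mcG)).(-K_{X})\le 0$, and part (1) is the sharper bound for semistable rank $2$ bundles with $c_{1}=-K_{X}$ on an index $1$ Fano $3$-fold from Faenzi's Lemma~1.2(1). This is a complete, uniform argument needing no surface sections at all.

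The concrete gaps in your proposal: for $r(X)=2$ you assume $B$ lies on a \emph{general} member of $|H|$, hence on a smooth del Pezzo surface. A general member of $|H|$ does not contain a prescribed curve $B$; you would have to take a member of $|\mcI_{B}(H)|$ (which exists only when $H.B < h^{0}(\mcO_{X}(H))$, though in the contrary case you are done anyway), and such a member can be singular along or near $B$, which breaks both the adjunction computation $B^{2}=-K_{S}.B$ and the Hodge index step. More seriously, for $r(X)=1$ you yourself observe that the K3-section Hodge index argument is vacuous ($B^{2}=0$), and the fallback you sketch --- bounding $\deg B$ by the dimension of its linear span in the anticanonical embedding --- only yields $\deg B \ge N+1$ where $N$ is the dimension of the span of $B$; since $B$ may be degenerate (a priori $N$ could be as small as $2$), this does not give $\tfrac14(-K_{X})^{3}+2$ for $(-K_{X})^{3}$ large, and ruling out low-span elliptic curves would require projective-normality or quadric-generation hypotheses on $X$ that do not hold for all index $1$ Fano $3$-folds. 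So the index $1$ case, which you correctly identify as the crux, is not actually proved; the missing idea is precisely the passage to the Serre-construction bundle $\mcG$ and its semistability, which converts the problem into a Chern-class inequality.
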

\begin{proof}
(1) and (2): First we treat the case $r(X) \leq 2$. 
By the Hartshorne-Serre correspondence, 
there is a rank $2$ vector bundle $\mcG$ fitting into
%\begin{align}\label{ex-HS-corresp}
$0 \to \mcO_{X} \to \mcG \to \mcI_{B}(-K_{X}) \to 0$. 
%\end{align}
This exact sequence implies $h^{0}(\mcG(K_{X}))=0$, which implies that $\mcG$ is slope stable (resp. slope semi-stable) if $r(X)=1$ (resp. $2$). 
%Since $\rho(X)=1$ and $r(X) \leq 2$, 
%$\mcG$ is a slope semi-stable vector bundle with $c_{2}(\mcG) \equiv B$. 
Then (1) follows from \cite[Lemma~3.1]{BF08} and 
(2) follows from the Bogomolov inequality $(c_{1}(\mcG)^{2}-4c_{2}(\mcG))(-K_{X}) \leq 0$. 

(3): When $r(X) \geq 3$, it is known by Kobayashi and Ochiai that $X$ is isomorphic to a hyperquadric $\Q^{3}$ in $\P^{4}$ or the projective $3$-space $\P^{3}$.  
It is easy to see that the degree of every elliptic curve $B$ on $\Q^{3}$ (resp. $\P^{3}$) is greater than or equal to $4$ (resp. $3$). 
Thus we obtain (3). 
\end{proof}

\subsection{Proof of Theorem~\ref{F-thm-glgen}}\label{subsec-proofofglgen}
Let $X$ be a smooth Fano $3$-fold of Picard rank $1$ and $\mcF$ a nef bundle with $\det \mcF=\mcO(-K_{X})$ and $\rk \mcF \geq 2$. 
Let $\pi \colon M:=\P_{X}(\mcF) \to X$ be the natural projection and $\xi$ a tautological divisor. 
Set $n=\dim M = 2+\rk \mcF$. 
Recall the assumption $\xi^{\dim M}=c_{1}(\mcF)^{3}-2c_{1}(\mcF)c_{2}(\mcF)+c_{3}(\mcF)>0$. 
Since $-K_{M} \sim (\rk \mcF) \xi$, $M$ is a weak Mukai manifold. 
Then Theorem~\ref{thm-Mella}
gives the following smooth ladder of $|\xi|$: 
\begin{align}\label{eq-ladder}
M_{n-2} \subset M_{n-3} \subset \cdots \subset M_{1} \subset M_{0}=M=\P_{X}(\mcF),
\end{align}
where $M_{i}$ is a smooth prime member of $|\xi|_{M_{i-1}}|$. 

Until the end of this proof, we suppose that $\mcF$ is not globally generated. 
We proceed in $5$ steps. 

\subsubsection{Step 1.}
In Step 1 and Step 2, we prepare some facts on the weak Fano $3$-fold $M_{n-3}$ and the K3 surface $M_{n-2}$. 

\begin{claim}\label{claim-weakFano3}
Set $\wt{X}:=M_{n-3}$, which is a weak Fano $3$-fold. 
Then $\pi_{\wt{X}}:=\pi|_{\wt{X}} \colon \wt{X} \to X$ is the blowing-up along
a (possibly disconnected) smooth curve $C$ on $X$. 
Moreover, the following assertions hold. 
\begin{enumerate}
\item $c_{2}(\mcF) \equiv C$ and $c_{3}(\mcF)=-2\chi(\mcO_{C})$. 
\item Every connected component of $C$ is of genus $g \geq 1$. 
\end{enumerate}
\end{claim}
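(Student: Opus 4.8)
The plan is to pin $\wt X$ down concretely as a blow-up, then read off the two numerical formulas by intersection theory on $M$, and finally rule out rational components of $C$ by a positivity argument that uses the failure of $\mcF$ to be globally generated.

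First I would identify $\wt X$. Write $r=\rk\mcF$. Since $\pi_{*}\mcO_{M}(\xi)=\mcF$, the $r-1$ general members of $|\xi|$ whose intersection is $\wt X=M_{n-3}$ are the divisors attached to general sections $s_{1},\dots,s_{r-1}\in H^{0}(X,\mcF)$, and inside $M=\Proj_{X}\Sym\mcF$ they are cut out by these degree-one forms; hence $\wt X=\Proj_{X}\Sym\mcQ$ with $\mcQ:=\mathrm{coker}\bigl(\phi\colon\mcO_{X}^{\oplus(r-1)}\xrightarrow{(s_{1},\dots,s_{r-1})}\mcF\bigr)$. Off the degeneracy locus $C:=\{x:\mathrm{rk}\,\phi_{x}\le r-2\}$ the sheaf $\mcQ$ is invertible and $\pi_{\wt X}$ is an isomorphism, while over $C$ the fibre is $\P(\mcF_{x}/\mathrm{im}\,\phi_{x})$, a line where $\mathrm{rk}\,\phi_{x}=r-2$ and a $\P^{k}$ with $k\ge 2$ where $\mathrm{rk}\,\phi_{x}\le r-3$. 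Because $\wt X=M_{n-3}$ is smooth and irreducible, $\pi_{\wt X}$ is birational with only $\P^{0}$- and $\P^{1}$-fibres, which forces $C$ to be a smooth (possibly disconnected) curve of pure codimension two and $\pi_{\wt X}$ to be the blow-up of $X$ along $C$, with $E:=\mathrm{Exc}(\pi_{\wt X})=\P_{C}(N^{\vee}_{C/X})$; this last step I would get from Mori's classification of $3$-dimensional divisorial extremal contractions, or from the fact that $\mcI_{C}$ is of linear type, so that $\Proj\Sym\mcQ=\Proj\Sym\mcI_{C}=\Bl_{C}X$.

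For (1) I would use intersection theory. On $M$ one has $[\wt X]=\xi^{r-1}$, so by the projection formula and the Segre relations $\pi_{\wt X *}\bigl((\xi|_{\wt X})^{k}\bigr)=\pi_{*}(\xi^{\,r-1+k})$ equals $c_{1}(\mcF)$, $c_{1}(\mcF)^{2}-c_{2}(\mcF)$ and $c_{1}(\mcF)^{3}-2c_{1}c_{2}(\mcF)+c_{3}(\mcF)$ for $k=1,2,3$. On the other hand $\xi|_{\wt X}=-K_{\wt X}=\pi_{\wt X}^{*}(-K_{X})-E$, and the standard blow-up identities give $\pi_{\wt X *}(E^{2})=-[C]$, $\pi_{\wt X *}(\pi^{*}(-K_{X})\cdot E)=0$ and $E^{3}=-\deg N_{C/X}=-(\deg\omega_{C}+\deg_{-K_{X}}C)=2\chi(\mcO_{C})-\deg_{-K_{X}}C$, using adjunction together with $\deg\omega_{C}=-2\chi(\mcO_{C})$. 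Expanding $(\pi^{*}(-K_{X})-E)^{2}$ and $(\pi^{*}(-K_{X})-E)^{3}$ and pushing forward, the $k=2$ comparison gives $[C]\equiv c_{2}(\mcF)$, and then the $k=3$ comparison — an equality of numbers, since $\dim X=3$ — reads $c_{1}^{3}-2c_{1}c_{2}+c_{3}=c_{1}^{3}-2\deg_{-K_{X}}C-2\chi(\mcO_{C})=c_{1}^{3}-2c_{1}c_{2}-2\chi(\mcO_{C})$, whence $c_{3}(\mcF)=-2\chi(\mcO_{C})$.

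For (2) I would argue by contradiction: suppose a connected component $C_{0}$ of $C$ is rational, so $E_{0}=\P_{C_{0}}(N^{\vee}_{C_{0}/X})$ is a Hirzebruch surface with $N_{C_{0}/X}=\mcO(b_{1})\oplus\mcO(b_{2})$, $b_{1}+b_{2}=\deg_{-K_{X}}C_{0}-2$. Restricting the nef class $-K_{\wt X}=\xi|_{\wt X}$ to $E_{0}$ gives $\xi|_{E_{0}}=\mcO_{E_{0}}(1)\otimes p^{*}\bigl(\mcO_{X}(-K_{X})|_{C_{0}}\bigr)$, whose direct image on $C_{0}\simeq\P^{1}$ is $N^{\vee}_{C_{0}/X}\otimes\mcO_{C_{0}}(-K_{X})$; nefness forces both summands to be non-negative, so this bundle is globally generated and $\xi|_{E_{0}}$ is base-point free. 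Hence $\Bs|{-K_{\wt X}}|\cap E_{0}\subseteq\Bs|\xi|_{E_{0}}|=\emptyset$, and running this over all rational components, the base curve $\Gamma=\Bs|{-K_{\wt X}}|$ — which is non-empty, and a smooth rational curve, precisely because $\mcF$ is not globally generated — is disjoint from every rational component of $E$. The plan is then to use the general K3 member $S=M_{n-2}\in|{-K_{\wt X}}|$: it is the minimal resolution of a surface $D\in|\mcI_{C}(-K_{X})|$, the copy of $C$ in $S$ is $E\cap S$, and $\xi|_{S}=(-K_{X})|_{D}-C$ is nef with base curve $\Gamma$, so by the Saint-Donat/Shokurov structure theorem $\xi|_{S}\sim gB+\Gamma$ with $B$ elliptic, $B^{2}=0$, $B\cdot\Gamma=1$, $\Gamma^{2}=-2$; intersecting the relation $(-K_{X})|_{D}=gB+\Gamma+C$ with the ample class $(-K_{X})|_{D}$ and with $C_{0}$ (a $(-2)$-curve on $S$, disjoint from the other components of $C$ and from $\Gamma$) is meant to contradict the ampleness of $(-K_{X})|_{D}$, which gives $g(C_{j})\ge 1$ for all $j$.

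The main obstacle is exactly this last step. Nefness and bigness of $-K_{\wt X}$ restricted to $E_{0}$ do not by themselves exclude a rational centre — blow-ups of a line or a conic in $\P^{3}$ are (weak) Fano and carry a $\P^{1}$-bundle of exactly this shape over a rational curve — so the argument must genuinely exploit that $|{-K_{\wt X}}|$ retains a base curve, i.e.\ that $\mcF$ is not globally generated, and forcing that base curve into a position incompatible with a rational component of $C$, through the interplay of $\Gamma$, $C$ and the elliptic pencil $|B|$ on $S$, is the crux. A secondary but real point, in the identification step, is checking that the $\phi$ furnished by the ladder of Theorem~\ref{thm-Mella} has a degeneracy locus of the expected codimension two — no $\P^{k}$-fibres with $k\ge 2$ — which is exactly where the smoothness of $M_{n-3}$ is used.
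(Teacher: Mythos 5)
Your identification of $\pi_{\wt{X}}$ as a blow-up along a smooth curve and your proof of (1) are essentially correct and close to the paper's route: the paper likewise takes the degeneracy locus of $s=(s_{1},\dots,s_{r-1})\colon\mcO_{X}^{\oplus r-1}\to\mcF$, gets the blow-up structure and the smoothness of $C$ from cited results (Fukuoka, Mori), deduces that $C$ is purely one-dimensional from the vanishing of $\mathcal{E}xt^{\geq 3}(\mcO_{C},\mcO_{X})$ forced by the two-term resolution $0\to\mcO_{X}^{\oplus r-1}\to\mcF\to\mcI_{C}(-K_{X})\to 0$, and reads off $c_{2},c_{3}$ from that sequence together with the standard formula for $(-K_{\wt{X}})^{3}$; your pushforward computation of powers of $\xi$ gives the same identities.

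The genuine gap is in (2), and you have diagnosed it yourself: the strategy via base-point-freeness of $\xi$ on the exceptional Hirzebruch surface and the interplay of $\Gamma$, $C$ and the elliptic pencil on the K3 member is never brought to a contradiction, and, as you observe, nefness of $-K_{\wt{X}}$ alone cannot exclude rational centres. The missing idea is that no global information about $\Bs|\xi|$ is needed at all: rational components are killed by a purely local degree count that uses only $\det\mcF=\mcO_{X}(-K_{X})$. Restrict the sequence $0\to\mcO_{X}^{\oplus r-1}\xrightarrow{s}\mcF\to\mcI_{C}(-K_{X})\to 0$ to a component $C_{i}\simeq\P^{1}$; since $C$ is smooth with disjoint components, $\mcI_{C}\otimes\mcO_{C_{i}}\simeq\mcN^{\vee}_{C_{i}/X}$, so one gets $\mcO_{\P^{1}}^{\oplus r-1}\to\mcF|_{C_{i}}\to\mcN^{\vee}_{C_{i}/X}(-K_{X}|_{C_{i}})\to 0$, and $\Im(s|_{C_{i}})$ is the kernel of the right-hand surjection. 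By adjunction $\deg\mcN_{C_{i}/X}=\deg\omega_{C_{i}}-\deg\omega_{X}|_{C_{i}}=-2+(-K_{X}).C_{i}$, so the rank-two quotient has degree $(-K_{X}).C_{i}+2$, whence $\deg\Im(s|_{C_{i}})=\deg\mcF|_{C_{i}}-\bigl((-K_{X}).C_{i}+2\bigr)=-2<0$. A quotient of a trivial bundle on $\P^{1}$ has non-negative degree, contradiction. This disposes of rational components outright, with no reference to the failure of global generation, to $\Gamma$, or to the K3 surface; those ingredients only enter later in the proof of Theorem~\ref{F-thm-glgen}.
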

\begin{proof}
Let $s_{1},\ldots,s_{\rk \mcF-1} \in H^{0}(X,\mcF) \simeq H^{0}(M,\mcO(\xi))$ be the sections defining $\wt{X}$. 
Set $s=(s_{1},\ldots,s_{\rk \mcF-1}) \colon \mcO^{\oplus \rk F-1} \to \mcF$ and let $C:=\{x \in X \mid \rk s(x)<\rk \mcF-1\}$ be the degeneracy locus of $s$, which is possibly empty. 
Since $\wt{X}$ is a smooth variety, $\wt{X}$ is the blowing-up of $X$ along $C$ with $\dim C \leq 1$ (see \cite[Lemma~6.9]{Fukuoka18}). 
Moreover, we obtain the following exact sequence: 
\begin{align}\label{F-ex-ext}
0 \to \mcO^{\oplus \rk \mcF-1} \mathop{\to}^{s} \mcF \to \mcI_{C}(-K_{X}) \to 0.
\end{align}
By the exact sequence (\ref{F-ex-ext}), we have 
$\mathcal{E}xt^{i}(\mcO_{C},\mcO_{X})=\mathcal{E}xt^{i-1}(\mcI_{C},\mcO_{X})=0$ for $i \geq 3$. 
Hence $C$ is purely 1-dimensional or empty. 
If $C$ is empty, then we have $\mcF=\mcO^{\oplus \rk\mcF-1} \oplus \mcO(-K_{X})$, which is globally generated since $X$ is of Picard rank $1$ (cf. \cite[Corollary~2.4.6]{Fanobook}). 
Hence we may assume that $C$ is not empty. 
Moreover, it follows from \cite[Theorem~(3.3)]{Mori82} that $C$ is smooth. 

(1) The equality $C \equiv c_{2}(\mcF)$ follows from the exact sequence (\ref{F-ex-ext}). 
Moreover, since $c_{1}(\mcF)^{3}-2c_{1}(\mcF)c_{2}(\mcF)+c_{3}(\mcF)=\xi^{\rk\mcF+2}=(-K_{\wt{X}})^{3}=(-K_{X})^{3}+2K_{X}.C-2\chi(\mcO_{C})$, 
we obtain that $c_{3}(\mcF)=-2\chi(\mcO_{C})$. 

(2) We assume that a connected component $C_{i}$ of $C$ is a smooth rational curve. 
Taking the restriction of the exact sequence (\ref{F-ex-ext}) to $C_{i}$, we obtain an exact sequence $\displaystyle \mcO_{\P^{1}}^{\oplus \rk \mcF-1} \mathop{\to}^{s|_{C_{i}}} \mcF|_{C_{i}} \to \mcN_{C_{i}/X}^{\vee}(-K_{X}|_{C_{i}}) \to 0$. 
Since $\det \mcF=\mcO(-K_{X})$ and 
$\deg( \mcN_{C_{i}/X}^{\vee}(-K_{X}|_{C_{i}}) )
%= -\deg(\mcN_{C_{i}/X})-2K_{X}.C_{i}
=\deg(-K_{C_{i}}) - \deg (K_{X}|_{C_{i}})$ by the adjunction formula, 
the degree of $\Im(s|_{C_{i}})$ is $-2$, which contradicts that $\Im(s|_{C_{i}})$ is a quotient of $\mcO_{\P^{1}}^{\oplus \rk \mcF-1}$. 
We complete the proof. 
\end{proof}

\subsubsection{Step 2.}

\begin{claim}\label{claim-K3}
Set $S:=M_{n-2}$, which is a K3 surface. 
Set $\ol{S}=\pi(S)$ and $\pi_{S}:=\pi|_{S} \colon S \to \ol{S}$. 
Then the following assertions hold. 

\begin{enumerate}
\item There is an elliptic fibration $f_{S} \colon S \to \P^{1}$ and a section $\Gamma$ of $f_{S}$ such that $\xi|_{S} \sim gB+\Gamma$ for a general fiber $B$ of $f_{S}$ . Moreover, it holds that 
\begin{align}\label{eq-BsLocus}
\Bs|\xi|=\Bs|\xi|_{S}|=\Gamma. 
\end{align}
\item It holds that $g = \frac{1}{2}(-K_{X})^{3}+K_{X}.C-\chi(\mcO_{C})+1 \in \Z_{\geq 2}$. 
\item 
%$\ol{S}$ has at most ordinary double points as its singular locus. Moreover, 
$\ol{S}$ is normal and contains the curve $C$ defined in Claim~\ref{claim-weakFano3}. 
\item If $\Gamma$ is not contracted by $\pi_{S}$, then $\pi_{S}$ is isomorphic along a general $f_{S}$-fiber $B$. 
In particular, a smooth elliptic curve $\ol{B}:=\pi_{S}(B)$ is an effective Cartier divisor on $\ol{S}$. 
Moreover, it holds that 
\begin{align}\label{eq-importantK3}
\ol{B}.C=(-K_{X}|_{\ol{S}}).\ol{B}-1.
\end{align}
\end{enumerate}
%In particular, $S$ satisfies the setting in Lemma~\ref{lem-degell2}. 
\end{claim}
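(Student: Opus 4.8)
The plan is to push the ladder \eqref{eq-ladder} all the way to its bottom and then feed the K3 surface $S=M_{n-2}$ into Saint--Donat's analysis of linear systems on K3 surfaces, while using the blow-up description of $\wt X=M_{n-3}$ from Claim~\ref{claim-weakFano3}. First I would record that repeated adjunction along the ladder gives $-K_{M_i}\sim(\rk\mcF-i)\,\xi|_{M_i}$, so $S$ has $K_S\sim 0$; since every $M_i$ with $i\le n-3$ has $h^1(\mcO_{M_i})=0$ (Kawamata--Viehweg vanishing, $-K_{M_i}$ being nef and big), the structure sequences $0\to\mcO_{M_i}\to\mcO_{M_i}(\xi)\to\mcO_{M_{i+1}}(\xi)\to 0$ give both $h^1(\mcO_S)=0$ (so $S$ is a K3 surface) and surjectivity of $H^0(M,\xi)\to H^0(S,\xi|_S)$. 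As $\Bs|\xi|$ lies in every member of $|\xi|$, it lies in $S$; combined with that surjectivity this yields $\Bs|\xi|=\Bs|\xi|_S|$.

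For (1): since $\mcF$ is not globally generated, $\Bs|\xi|\ne\emp$, so by Theorem~\ref{thm-Mella} it is a smooth rational curve, and so is $\Bs|\xi|_S|$ by the above. The class $\xi|_S$ is nef, and big because $(\xi|_S)^2=\xi^{\dim M}>0$. I would then invoke Saint--Donat's structure theorem \cite{SD74} (see also \cite{Shokurov}): a nef and big class on a K3 surface with nonempty base locus has a single smooth rational $(-2)$-curve $\Gamma$ as its fixed part, its moving part is $g$ times the fibre class of an elliptic pencil $|B|$ with $B\cdot\Gamma=1$, and the base locus equals $\Gamma$. The pencil $|B|$ defines $f_S\colon S\to\P^1$; $\Gamma$ meets a general fibre once, hence is a section. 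This gives (1). For (2) I would compute $(\xi|_S)^2$ twice: $(gB+\Gamma)^2=2g-2$ since $B^2=0$, $B\cdot\Gamma=1$, $\Gamma^2=-2$; and $(\xi|_S)^2=\xi^{\dim M}=c_1(\mcF)^3-2c_1c_2(\mcF)+c_3(\mcF)=(-K_X)^3+2K_X\cdot C-2\chi(\mcO_C)$ by Claim~\ref{claim-weakFano3}. Equating gives the asserted value of $g$, and $g\ge 2$ because $2g-2=(\xi|_S)^2>0$.

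For (3) and (4) I would study $\pi_S=\pi|_S\colon S\to\ol S\subset X$ via the blow-up $\pi_{\wt X}\colon\wt X\to X$ along $C$, with exceptional divisor $E\cong\P(\mcN^\vee_{C/X})$. Since $S\in|\xi|_{\wt X}|=|-K_{\wt X}|=|\pi_{\wt X}^*(-K_X)-E|$, the image divisor $\ol S$ satisfies $\pi_{\wt X}^*\ol S=S+E$, so $\ol S\in|-K_X|$ with $\mathrm{mult}_C\ol S=1$; in particular $C\subset\ol S$. Restricting, $E|_S$ decomposes as a section $C_S$ of $E\to C$ (mapping isomorphically onto $C$) plus possibly some fibres $F_x$ of $E\to C$ contained in $S$; these fibres are precisely the curves contracted by $\pi_S$, and each is a $(-2)$-curve on the K3 surface $S$. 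Normality of $\ol S$ follows by Stein factorization $S\to Y\to\ol S$ together with $\chi(\mcO_Y)=2=\chi(\mcO_{\ol S})$ (the right-hand side because $\ol S\in|-K_X|$ on the Fano threefold $X$), which forces the finite part $Y\to\ol S$ to be an isomorphism; hence $\ol S$ is a normal K3 surface with at worst Du Val singularities, giving (3). For (4), $\xi|_S\cdot F_x=\xi|_{\wt X}\cdot F_x=1$ for each contracted fibre $F_x$; combined with $\xi|_S\sim gB+\Gamma$, $g\ge 2$, and $\Gamma\cdot F_x\ge 0$ whenever $F_x\ne\Gamma$, this forces $B\cdot F_x=0$ for every contracted fibre distinct from $\Gamma$. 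So if $\Gamma$ is not contracted, a general $B$ is disjoint from $\Exc(\pi_S)$, hence $\pi_S$ is an isomorphism near $B$; then $\ol B:=\pi_S(B)$ is a smooth elliptic curve in the smooth locus of $\ol S$, thus an effective Cartier divisor. Finally $\pi_S^*(-K_X|_{\ol S})\sim(\xi+E)|_S=\xi|_S+C_S+\sum F_x$, and intersecting with $B$ (using $\xi|_S\cdot B=\Gamma\cdot B=1$ and $F_x\cdot B=0$) gives $(-K_X|_{\ol S})\cdot\ol B=1+C_S\cdot B=1+\ol B\cdot C$, which is \eqref{eq-importantK3}.

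The step I expect to be hardest is the analysis of $\pi_S$ underlying (3) and (4): splitting $E|_S$ into a section and vertical fibres, pinning down the contracted curves, proving normality of $\ol S$, and carrying out the numerical bookkeeping which shows that a general fibre $B$ misses $\Exc(\pi_S)$ exactly when $\Gamma$ is not contracted. A lesser but genuine point of care is the correct invocation of Saint--Donat's theorem in (1), especially that the fixed part of a non-free nef and big system on a K3 is a single smooth rational curve and the moving part a multiple of an elliptic pencil.
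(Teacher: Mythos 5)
Your proof follows essentially the same route as the paper: the smooth ladder together with Saint--Donat's structure theorem for (1)--(2), and the blow-up description of $\wt{X}\to X$ with the same intersection-number bookkeeping ($\xi\cdot l=1$ for contracted fibres, $g\ge 2$ forcing $B\cdot l=0$, and $E|_{S}\cdot B$ computing $\ol{B}\cdot C$) for (3)--(4). The only divergence is the normality of $\ol{S}$ in (3), where the paper argues via connectedness of the fibres of $\pi_{S}$ while you compare Euler characteristics through the Stein factorization; both are comparably terse (your step from $\chi(\mcO_{Y})=\chi(\mcO_{\ol{S}})$ to $Y\simeq\ol{S}$ tacitly needs the cokernel of $\mcO_{\ol{S}}\to\nu_{*}\mcO_{Y}$, a sheaf supported on a curve with vanishing $\chi$, to be zero), and neither choice affects the rest of the argument.
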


\begin{proof}
Since we assume that $\Bs|\xi|$ is not empty, so is $\Bs|\xi|_{S}|$, where $\xi|_{S}$ is a nef and big divisor on $S$. 
Then (1) is known by \cite[Lemma~2.3]{Shokurov} and \cite[(2.7.3) and (2.7.4)]{SD74}. 
(2) immediately follows from $c_{1}(\mcF)^{3}-2c_{1}(\mcF)c_{2}(\mcF)+c_{3}(\mcF)=\xi^{\rk\mcF +2} = (\xi|_{S})^{2}=(gB+\Gamma)^{2} = 2g-2 > 0$ and Claim~\ref{claim-weakFano3}~(1). 

(3) 
Set $E_{S}:=\Exc(\pi_{\wt{X}}) \cap S$ where $\pi_{\wt{X}}$ is the blowing-up defined in Claim~\ref{claim-weakFano3}. 
Then $E_{S}$ is a member of some tautological bundle of the $\P^{1}$-bundle $\Exc(\pi_{\wt{X}}) \to C$. 
Hence $\ol{S}$ contains $C$ and every fiber of $S \to \ol{S}$ is connected, 
which implies $\ol{S}$ is normal. 
%Note that $\ol{S}$ is the degenerate locus of the $(\rk \mcF)$ global sections of $\mcF$ corresponding to the ladder (\cite{eq-ladder}). 
%Hence we have $\omega_{\ol{S}} \simeq \mcO_{\ol{S}}$. 

%Write $E_{S}=\wt{C}+\sum_{i=1}^{m} a_{i}l_{i}$ as a divisor of $\Exc(\pi_{\wt{X}})$, 
%where $\wt{C}$ is the horizontal irreducible component of $E_{S}$ and $l_{i}$ is a fiber of with respect to $\Exc(\pi_{\wt{X}}) \to C$. 
%Note that $m$ might be $0$. 
%Then $-1=\Exc(\pi_{\wt{X}}).l_{i}=E_{S}.l_{i} = 1-2a_{i}$, which implies $a_{i}=1$. 
%Hence $E_{S}=\wt{C}+\sum_{i}l_{i}$ and $\pi_{\wt{S}}$ contracts the $(-2)$-curves $l_{1},\ldots,l_{m}$. 

(4) Suppose $\Gamma$ is not contracted by $\pi_{S}$. 
Then for every exceptional curve $l$ of $\pi_{S}$, we have $1=\xi.l=(gB+\Gamma).l \geq g(B.l)$. 
Since $g \geq 2$, we have $B.l=0$. 
Hence $\pi_{S}$ is isomorphic along $B$. 
Thus $\ol{B}$ is a Cartier divisor on $\ol{S}$ and it holds that $B=\pi_{S}^{\ast}\ol{B}$. 
Since $-K_{\wt{X}}|_{S}=\xi|_{S}=gB+\Gamma$, we have 
$\ol{B}.C=\pi_{S}^{\ast}\ol{B}.E_{S} = B.(\pi_{S}^{\ast}(-K_{X}|_{S})-(gB+\Gamma)) = (-K_{X}).\ol{B}-1$, 
which implies the equality (\ref{eq-importantK3}). 
\end{proof}

\subsubsection{Step 3.}
In this step, we show the following claim. 

\begin{claim}\label{claim-degell2}
If $\Gamma$ is not contracted by $\pi_{S}$, then, for a general $f_{S}$-fiber $B$, we have 
\begin{align}\label{ineq-Bogomolov1}
\frac{1}{4}(-K_{X})^{3} \leq (-K_{X}).\ol{B},
\end{align}
where $\ol{B}=\pi_{S}(B)$.  
\end{claim}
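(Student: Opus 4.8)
The plan is to derive the inequality from a Bogomolov-type argument applied to a rank $2$ bundle on the K3 surface $S$, exactly as in the literature on instanton bundles (cf. \cite{Faenzi2014}). Assume $\Gamma$ is not contracted by $\pi_S$, so that by Claim~\ref{claim-K3}~(4) the curve $\ol{B}=\pi_S(B)$ is an effective Cartier divisor on the normal surface $\ol{S}=\pi(S)$, and $\ol{S}$ contains the smooth curve $C$. First I would transport the numerical data back to $S$: set $A:=\xi|_S$, so that $A^2=2g-2>0$ with $g\geq 2$, and recall $A\sim gB+\Gamma$ with $B^2=0$, $\Gamma^2=-2$, $B.\Gamma=1$. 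Since $B=\pi_S^\ast\ol{B}$ and $E_S=\Exc(\pi_{\wt X})\cap S$ satisfies $B.E_S=\ol{B}.C$, the key relation (\ref{eq-importantK3}) reads $\ol{B}.C=(-K_X).\ol{B}-1$, i.e. $B.E_S=(-K_X).\ol{B}-1$.

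The main step is to produce, on $S$, a rank $2$ vector bundle via the Hartshorne--Serre correspondence applied to the divisor class $(-K_X)|_S$ together with the curve $C\subset \ol{S}$ (or rather its proper transform on $S$, which is $E_S$ up to the obvious identification). Concretely, one has on $S$ an exact sequence $0\to\mcO_S\to\mcG\to\mcI_{E_S/S}((-K_X)|_S)\to 0$ with $\mcG$ a rank $2$ bundle, $c_1(\mcG)=(-K_X)|_S$ and $c_2(\mcG)=\deg E_S$ (as a $0$-cycle on $S$ this is the length of the relevant scheme, which by the projection and Claim~\ref{claim-weakFano3} equals the appropriate intersection number). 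Then I would apply the Bogomolov inequality on the K3 surface $S$: any slope-semistable rank $2$ bundle $\mcG$ satisfies $c_1(\mcG)^2\leq 4c_2(\mcG)$. The semistability of $\mcG$ should follow because a destabilizing sub-line-bundle would, combined with $\rho$-considerations coming from $\Pic(X)\simeq\Z$ and $-K_X$ being the polarization pulled back to $S$, force $C$ to move in a way incompatible with its being the genuine degeneracy curve — this is the same mechanism as in Claim~\ref{claim-weakFano3}~(2), or alternatively one checks $H^0(\mcG(-(-K_X)|_S))=0$ directly from the sequence. Feeding $c_1^2=((-K_X)|_S)^2=(-K_X)^2.\ol{S}$ and $c_2=(-K_X).\ol{B}$ (after identifying $\deg E_S$ via (\ref{eq-importantK3}) and the fibration structure: each fiber $B$ meets $E_S$ in $(-K_X).\ol{B}-1$ points, and $A=gB+\Gamma$, so $A.E_S$ gives $c_2$ in terms of $g$ and $(-K_X).\ol{B}$) into $c_1^2\leq 4c_2$ and simplifying yields precisely $\tfrac14(-K_X)^3\leq (-K_X).\ol{B}$, using $(-K_X)^2.\ol{S}=(-K_X)^3$ since $S\in|\xi|$ maps to an anticanonical-type divisor and $\ol{S}\sim -K_X$ on $X$ up to the blow-up bookkeeping.

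I expect the \textbf{main obstacle} to be the bookkeeping that relates the $0$-cycle $c_2(\mcG)$ on $S$ to the intersection number $(-K_X).\ol{B}$ cleanly, and establishing the semistability of $\mcG$ rigorously: one must be careful that $E_S$ is reduced and has the expected class, that $\ol{S}$ being only normal (not smooth) does not obstruct the Hartshorne--Serre construction (it does not, since we work on the smooth $S$ and only use $\ol{B}$, $C$ as divisors/curves there), and that the Bogomolov inequality is applied to a bundle that is genuinely slope-semistable with respect to $(-K_X)|_S$. An alternative, possibly cleaner route avoiding bundles entirely: restrict everything to a general fiber $B$, which is a smooth elliptic curve, note that the rational curve $\Gamma$ meets $B$ transversally in one point, and play off the Hodge index theorem on $\ol{S}$ against the fact that $\ol{B}$ and $C$ are effective with $\ol{B}.C=(-K_X).\ol{B}-1\geq 0$; then $(-K_X)|_{\ol S}$, $\ol{B}$, $C$ generate a sublattice of $\mathrm{NS}(\ol S)$ whose Gram matrix must have non-positive determinant by Hodge index (since $\ol{B}^2=0$), and expanding that determinant inequality gives the bound. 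I would develop the bundle-theoretic argument as the primary proof and keep the Hodge-index computation in reserve.
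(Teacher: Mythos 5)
Your guiding idea --- Hartshorne--Serre plus a Bogomolov-type inequality --- is indeed how the paper handles \emph{part} of this claim, but your proposal has two genuine problems. First, the construction you describe is not well-formed: on the K3 surface $S$ the curve $E_S$ (the proper transform of $C$) is a \emph{divisor}, so $\mcI_{E_S/S}((-K_X)|_S)$ is a line bundle and your ``rank $2$ bundle $\mcG$'' is just an extension of two line bundles, with $c_2(\mcG)=\mcO_S\cdot((-K_X)|_S-E_S)=0$; there is no $0$-cycle in sight whose length equals $(-K_X).\ol{B}$, so the identification $c_2=(-K_X).\ol{B}$ has no content. The correct version of your idea is carried out in the paper's Lemma~\ref{lem-degell1}: apply Hartshorne--Serre on the \emph{threefold} $X$ to the elliptic curve $\ol{B}$ itself (which is codimension $2$ there), getting $0\to\mcO_X\to\mcG\to\mcI_{\ol{B}}(-K_X)\to 0$ with $c_2(\mcG)\equiv\ol{B}$, and then apply Bogomolov.

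Second, and more seriously, no argument that uses only ``$\ol{B}$ is an elliptic curve in a Fano $3$-fold of Picard rank $1$'' can prove the claim, because the inequality $\frac14(-K_X)^3\le(-K_X).\ol{B}$ is \emph{false} for arbitrary elliptic curves when the Fano index is $\ge 3$: a plane cubic in $\P^3$ has $(-K_{\P^3}).B=12<16$, and a degree-$4$ elliptic curve in $\Q^3$ has $(-K_{\Q^3}).B=12<27/2$. Correspondingly, the semistability you need for Bogomolov genuinely fails there (a degenerate $\ol{B}$ gives $H^0(\mcI_{\ol{B}}(H_X))\neq 0$, destabilizing $\mcG$); this is why Lemma~\ref{lem-degell1} assumes $r(X)\le 2$ for the Bogomolov bound. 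The paper therefore splits the proof of Claim~\ref{claim-degell2}: for $r(X)\le 2$ it quotes Lemma~\ref{lem-degell1}, and for $X=\P^3,\Q^3$ it runs a separate intersection-theoretic argument on $S$ that crucially exploits the hypothesis that $B$ is a fiber of $f_S$ and $\Gamma$ a section not contracted by $\pi_S$ --- namely $\xi|_S\sim gB+\Gamma$ with $g\ge 2$, $H.\Gamma\ge 1$, the nefness of $E_S$, the identity (\ref{eq-importantK3}), and the effectivity of $H-B$ --- to derive $H.B<3$, a contradiction. Your proposal contains no mechanism for exploiting this extra structure, so it cannot close the high-index case; your reserve Hodge-index sketch is too vague to fill the gap (and a rank-$3$ sublattice of signature $(1,2)$ has non-negative, not non-positive, Gram determinant).
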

\begin{proof}
Since $\Gamma$ is not contracted by $\pi_{S}$, 
$\ol{B}:=\pi_{S}(B)$ is a smooth elliptic curve by Claim~\ref{claim-K3}~(4). 
The inequality $(-K_{X}).\ol{B} \geq (1/4)(-K_{X})^{3}$ follows from Lemma~\ref{lem-degell1} when $r(X) \leq 2$. 
Assume that $r(X) \geq 3$, that is, $X=\P^{3}$ or $\Q^{3}$. 
Let $H_{X}$ be an ample generator of $\Pic(X)$ and $r:=r(X)$. 
Set $H=(\pi_{X}^{\ast}H_{X})|_{S}$ and $A:=rH \sim (\pi_{X}^{\ast}(-K_{X}))|_{S}$. 
To obtain a contradiction, we assume $(-K_{X}).\ol{B} < (1/4)(-K_{X})^{3}$. 
Then Lemma~\ref{lem-degell1} gives $(-K_{X}).\ol{B}=12$. 
When $X=\P^{3}$ (resp. $\Q^{3}$), $\ol{B}$ is a plane cubic curve (resp. a complete intersection of two quadrics in $\P^{3}$) and hence $H-B$ is effective. 
Let $\wt{C} \subset S$ be the proper transform of $C \subset \ol{S}$. 
Then $\wt{C}$ is a nef divisor on $S$ by Claim~\ref{claim-weakFano3}~(2). 
Hence we have $H.\wt{C}=(H-B+B).\wt{C} \geq B.\wt{C}=A.B-1$ by the equality (\ref{eq-importantK3}). 
Since $\Gamma$ is not contracted by $\pi_{S}$, 
we have $H.\Gamma \geq 1$ and hence $gB.H+1 \leq (gB+\Gamma).H=(A-\wt{C}).H \leq A.H-A.B+1$. 
Hence we have $(A+gH).B \leq A.H$. 
Since $A=rH$, we have $H.B \leq \frac{r}{r+g}H^{2}$.  
When $X=\P^{3}$ (resp. $\Q^{3}$), we obtain $H.B \leq \frac{16}{4+g} < 3$ (resp. $H.B \leq \frac{12}{3+g} < 3$) since $g \geq 2$, which is a contradiction. 
Hence we have $(-K_{X}).\ol{B} \geq (1/4)(-K_{X})^{3}$. 
\end{proof}

\subsubsection{Step 4.}\label{subsubsec-GammaContracted}
In this step, we show the following key claim. 
\begin{claim}\label{claim-GammaContracted}
The base locus $\Bs|\xi|=\Gamma$ is contracted by $\pi$. 
\end{claim}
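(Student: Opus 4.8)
The strategy is by contradiction: assume $\Gamma$ is \emph{not} contracted by $\pi$, and derive a numerical impossibility by combining the inequality $(-K_{X})^{3}/4 \le (-K_{X}).\ol{B}$ from Step~3 with the degree bound for elliptic curves in Lemma~\ref{lem-degell1} and with the structure of the K3 surface $S$ described in Claim~\ref{claim-K3}. The point is that $\ol{B} = \pi_{S}(B)$ is a genuine smooth elliptic curve on $X$, so its anti-canonical degree $(-K_{X}).\ol{B}$ is simultaneously bounded \emph{below} by $\frac14(-K_{X})^{3}$ (Step~3) and governed from \emph{above} by the relation $\xi|_{S} \sim gB + \Gamma$ on $S$, where $g = \frac12(-K_{X})^{3} + K_{X}.C - \chi(\mcO_{C}) + 1 \ge 2$.

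First I would push the computation on $S$ that was only partially carried out in Step~3. Intersecting $\xi|_{S} = gB + \Gamma$ with the pullback class $H := (\pi_{X}^{\ast}H_{X})|_{S}$ (where $H_{X}$ is the ample generator of $\Pic(X)$, or $-K_{X}$ itself via $r(X)$) and using that $\Gamma$ is not contracted (hence $H.\Gamma \ge 1$) together with the equality $\ol{B}.C = (-K_{X}).\ol{B} - 1$ from (\ref{eq-importantK3}), one obtains an inequality of the shape $(A + gH).B \le A.H$, i.e. $H.B \le \tfrac{1}{1 + g/\!\deg} H^{2}$ after normalizing. Since $g \ge 2$ and $g$ grows linearly with $(-K_{X})^{3}$, this forces $H.B = (-K_{X}).\ol{B}$ to be small — strictly smaller than $\frac14(-K_{X})^{3}$ once $(-K_{X})^{3}$ is large enough — contradicting Step~3. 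Concretely I expect this rules out all del Pezzo $3$-folds of degree $\ge 2$ and all Fano $3$-folds of index $\ge 2$ with $(-K_{X})^{3}$ in the relevant range, and one has to inspect by hand the finitely many small cases (degree $1$, and low anti-canonical degree Fano $3$-folds of index $1$) where the crude inequalities leave room.

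The main obstacle, and where the argument will need care, is the \textbf{degree-$1$ del Pezzo case} and more generally the boundary cases where the inequality from Step~3 is an \emph{equality} or nearly so: there $g$ is small, $\ol{B}$ can be a low-degree elliptic curve, and the numerical contradiction does not come for free. In those cases I would argue directly on the geometry of $S$ — using that $\Bs|\xi| = \Gamma$ is a $(-2)$-curve (a section of the elliptic fibration $f_{S}$) and that $\pi_{S}$ contracts precisely the $(-2)$-curves lying over $C$ — to show that if $\Gamma$ survives on $\ol{S}$ then $\ol{S}$, and hence $X$, acquires a divisor class incompatible with $\Pic(X) \simeq \Z$, or that $B$ would have to be contracted after all. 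Once $\Gamma$ is shown to be contracted by $\pi$ in every case, Claim~\ref{claim-GammaContracted} follows, and (as sketched in the outline) one then blows up $M$ along $\Gamma$, computes the conormal bundle of the line $\Gamma \subset$ (a $\pi$-fiber), and reads off that $X$ must be the degree-$1$ del Pezzo $3$-fold with $\mcF \simeq \mcO_{X}(\tfrac{-K_{X}}{2})^{\oplus 2} \oplus \mcO_{X}^{\oplus \rk\mcF - 2}$.
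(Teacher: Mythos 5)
Your overall strategy --- assume $\pi^{\ast}(-K_{X}).\Gamma \geq 1$ and play the lower bound $(-K_{X}).\ol{B} \geq \tfrac14(-K_{X})^{3}$ of Step~3 against the decomposition $\xi|_{S} \sim gB + \Gamma$ --- is the same as the paper's, but the proposal does not actually close the argument, and the place where you expect trouble is not where the proof does its real work. The paper intersects $\xi|_{S}=gB+\Gamma$ with $\pi_{S}^{\ast}(-K_{X}|_{\ol{S}})$ to get $g(-K_{X}).\ol{B}+1 \leq (-K_{X})^{3}-(-K_{X}).C$, rewrites the right-hand side via the genus formula of Claim~\ref{claim-K3}~(2) and $\chi(\mcO_{C})\le 0$ as $(g-1)+\tfrac12(-K_{X})^{3}$, and combines this with Step~3 into the single inequality $(g-2)((-K_{X})^{3}-4)\leq 0$; since $(-K_{X}).\ol{B}\geq 2$ also yields $(-K_{X})^{3}\geq 2(g+2)\geq 8$, this forces $g=2$ and equality at every step. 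The contradiction is then extracted from the equality case: equality in Lemma~\ref{lem-degell1} rules out index $1$ (so the degree-$1$ del Pezzo case is \emph{not} a leftover small case --- it is killed by the same numerics, via the ``$+2$'' in Lemma~\ref{lem-degell1}~(1)); $\chi(\mcO_{C})=0$ makes $-K_{X}.C=\tfrac12((-K_{X})^{3}-2)$ odd, which forces the Fano index to be odd and $\geq 2$, hence equal to $3$, hence $X=\Q^{3}$; and $(-K_{\Q^{3}})^{3}=54$ is not divisible by $4$, contradicting $(-K_{X}).\ol{B}=(-K_{X})^{3}/4\in\Z$. None of this equality-and-parity analysis appears in your proposal, and without it the ``large degree'' heuristic does not produce a contradiction in the borderline cases.

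The fallback you offer for those cases is also not viable as stated: a curve class on the surface $\ol{S}=\pi(S)\subset X$ does not induce a divisor class on the threefold $X$, so the assertion that ``$X$ acquires a divisor class incompatible with $\Pic(X)\simeq\Z$'' has no content, and knowing which $(-2)$-curves $\pi_{S}$ contracts does not by itself force $B$ or $\Gamma$ to be contracted. You have also partly conflated the computation of Step~3 (the inequality $(A+gH).B\le A.H$ is used there only to establish the lower bound of Claim~\ref{claim-degell2} when $r(X)\ge 3$) with the computation needed in this step. In short: the skeleton is right, but the decisive move --- forcing $g=2$ and exploiting the resulting equalities together with the parity of $-K_{X}.C$ and the index classification --- is missing, and the proposed substitute would not supply it.
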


\begin{proof}[Proof of Claim~\ref{claim-GammaContracted}]
We derive a contradiction by assuming $\pi_{X}^{\ast}(-K_{X}).\Gamma \geq 1$. 
By Claim~\ref{claim-K3}~(1), we have $\xi|_{S} \sim gB+\G$ on $S$ 
and hence 
\begin{align}\label{ineq-Bogomolov2}
g(-K_{X}).\ol{B}+1 
&\leq \pi_{S}^{\ast}(-K_{X}|_{\ol{S}}).(gB+\Gamma)
=\pi^{\ast}(-K_{X}).\xi^{\dim \P_{X}(\mcF)-1}  \\
&=c_{1}(\mcF)^{3}-c_{1}(\mcF)c_{2}(\mcF) 
=(-K_{X})^{3}-(-K_{X}).C \notag
\end{align}
by Claim~\ref{claim-weakFano3}~(1). 
Then Claim~\ref{claim-K3}~(2) implies 
\begin{align}\label{ineq-t2andc3}
(-K_{X})^{3}-(-K_{X}).C
&=(g-1)+\frac{(-K_{X})^{3}}{2} + \chi(\mcO_{C}) \leq (g-1)+\frac{(-K_{X})^{3}}{2}. 
\end{align}
Then 
the inequalities (\ref{ineq-Bogomolov1}), (\ref{ineq-Bogomolov2}), and (\ref{ineq-t2andc3}) imply $g\frac{(-K_{X})^{3}}{4}+1 \leq (g-1)+\frac{(-K_{X})^{3}}{2}$, 
which is equivalent to 
\begin{align}\label{ineq-sandwich}
(g-2)((-K_{X})^{3}-4) \leq 0. 
\end{align}
Noting that $-K_{X}.\ol{B} \geq 2$ since $\mcO_{X}(-K_{X})$ is globally generated (cf. \cite[Corollary~2.4.6]{Fanobook}). 
Hence we also obtain $2g+1 \leq (g-1)+\frac{(-K_{X})^{3}}{2}$ from (\ref{ineq-Bogomolov2}) and (\ref{ineq-t2andc3}), which implies 
$8 \leq 2(g+2) \leq (-K_{X})^{3}$. 
Then (\ref{ineq-sandwich}) implies $g=2$. 
Hence the inequalities (\ref{ineq-Bogomolov1}), (\ref{ineq-Bogomolov2}), and (\ref{ineq-t2andc3}) are equalities. 
Thus we have the following assertions: 
\begin{itemize}
\item[(a)] $-K_{X}.\ol{B}=(-K_{X})^{3}/4 \geq 2$ and hence $(-K_{X})^{3}$ is divisible by $4$. 
\item[(b)] $\chi(\mcO_{C})=0$ and hence $2(-K_{X}.C)=(-K_{X})^{3}-2$ by Claim~\ref{claim-K3}~(2). 
\end{itemize}
If follows from (a) and Lemma~\ref{lem-degell1}~(1) that $r(X) \geq 2$. 
It also follows from (a) and (b) that $-K_{X}.C$ is an odd number. 
In particular, the Fano index $r(X)$ of $X$ is $3$, i.e., $X$ is a hyperquadric, which contradicts (a). 
Hence we obtain a contradiction and conclude that $\pi^{\ast}(-K_{X}).\Gamma=0$, which means that $\Gamma$ is contracted by $\pi$. 
\end{proof}

\subsubsection{Step 5.}
Finally, we prove Theorem~\ref{F-thm-glgen}. 
Let $\tau \colon \wt{M}:=\Bl_{\G}M \to M=\P_{X}(\mcF)$ be the blowing-up. 
Set $D:=\Exc(\tau)$ and $\wt{\xi}:=\tau^{\ast}\xi-D$. 
Since $\xi|_{S} \sim gB+\G$, $|\wt{\xi}|$ is base point free. 
Let $\Psi \colon \wt{M} \to \P^{g+\rk \mcF}$ be the morphism given by $|\wt{\xi}|$ and $W$ its image. 
Then, for the proper transform $\wt{S} \subset \wt{M}$ of $S$, the morphism $\wt{S} \to \Psi(\wt{S})$ is given by $|gB|$. 
Since $\Psi(\wt{S})$ is a linear section of $W$, $W$ is of degree $g$. 
In particular, the $\Delta$-genus of $W \subset \P^{g+\rk \mcF}$ is $0$. 
Moreover, $D \to W$ is birational. 
%This can be shown as follows.
%Let $\wt{S} \subset \wt{M}$ be the proper transform of $S$. 
%Then $\wt{S}$ is the pull-back of a codimension $(\dim W-1)$ linear section $C \subset W$.
%Since $\wt{S} \to C$ is a given by $|gB|$, the curve $C$ is a smooth rational curve of degree $g$. 
%Then $D \to W$ is birational since $\Gamma = \wt{S} \cap D$ is a section of $\wt{S} \to C$. 

Let us compute $g=\deg W$ and the normal bundle $\mcN_{\G/M}$. 
By Claim~\ref{claim-GammaContracted}, $\Gamma \subset \wt{X}$ is contracted by $\pi_{X}$. 
Then Claim~\ref{claim-weakFano3} implies that $\Gamma$ is a line in a fiber of $M=\P_{X}(\mcF) \to X$. 
Therefore, it holds that $\mcN_{\G/M} \simeq \mcO_{\P^{1}}(1)^{\oplus \rk \mcF-2} \oplus \mcO_{\P^{1}}^{\oplus 3}$. 
Hence $D$ is isomorphic to $\P_{\P^{1}}(\mcO^{\oplus \rk \mcF-2} \oplus \mcO(1)^{\oplus 3})$. 
Under this identification, $\mcO(\wt{\xi})|_{D}$ is isomorphic to the tautological bundle. 
Since $\Psi|_{D} \colon D \to W$ is birational, we have $g=\deg W=\wt{\xi}^{\rk \mcF+1}.D=3$. 
Since the $\Delta$-genus of $W$ is $0$, 
$D=\P_{\P^{1}}(\mcO(1)^{\oplus 3} \oplus \mcO^{\oplus \rk \mcF-2}) \to W$ is the morphism given by the complete linear system of the tautological bundle and $W$ is a join of $\P^{\rk \mcF-3}$ and $\P^{1} \times \P^{2}$ in $\P^{\rk \mcF+3}$. 

Then we let $V=M_{n-4} \subset M=\P_{X}(\mcF)$ be the intersection of general $(r-2)$ members of $|\xi|$.
$V$ contains $\G$ since $\G$ is the base locus of $|\xi|$. 
Let $\tau_{V} \colon \wt{V}:=\Bl_{\G}V \to V$ be the blowing-up and $D_{V}:=\Exc(\wt{V} \to V)$. 
Then $|\tau_{V}^{\ast}\xi-D_{V}|$ induces the morphism $\psi \colon \wt{V} \to \P^{1} \times \P^{2}$. 
Note that $D_{V} \to \P^{1} \times \P^{2}$ is isomorphic. 
Let $\wt{F_{1}},\wt{F_{2}}$ be two general fibers of $\pr_{1} \circ \psi \colon \wt{V} \to \P^{1}$. 
For each $i \in \{1,2\}$, we set $F_{i}:={\tau_{V}}_{\ast}\wt{F_{i}}$. 
Since $D_{V} \simeq \P^{1} \times \P^{2}$ and $D_{V} \to \G$ coincides with the first projection, 
it holds that $F_{1} \cap F_{2}=\emp$. 
Hence $|F_{i}|$ is base point free and induces a morphism $h \colon V \to \P^{1}$. 
Thus we obtain the following commutative diagram: 

\[\xymatrix{
&D_{V}\ar[ld] \ar@{^{(}->}[r] \ar@(u,u)[rr]^{\simeq}&\wt{V} \ar[ld]_{\tau_{V}} \ar[r] & \P^{1} \times \P^{2} \ar[ld]^{\pr_{1}} \\
\Gamma  \ar@{^{(}->}[r] &V \ar[r]_{h} \ar[d]_{\pi|_{V}} & \P^{1} \\
&X
}\]
Recall that the $h$-section $\Gamma$ is contracted by $\pi|_{V} \colon V \to X$, which is an adjunction theoretic scroll. 
We set $x:=\pi|_{V}(\G)$ and $J:=\pi|_{V}^{-1}(x)$. 
Then $J$ is a projective space containing $\G$. 
If $\dim J \geq 2$, then $h|_{J} \colon J \to \P^{1}$ must contract the whole $J$, which contradicts that $\G$ is a section of $h$. 
Hence $J=\G$.  
Therefore, the induced morphism $(\pi|_{V},h) \colon V \to X \times \P^{1}$ is birational. 
Since every fiber of $\pi|_{V} \colon V \to X$ is a projective space, $V$ is isomorphic to $X \times \P^{1}$. 

Let $\alpha \colon \mcO_{X}^{\oplus r-2} \to \mcF$ be the morphism corresponding to $(r-2)$ members of $|\xi|$ which define $V \subset \P_{X}(\mcF)$. 
Since $V \simeq X \times \P^{1}$, $\alpha$ is injective and $\Cok \alpha \simeq \mcL^{\oplus 2}$ for some line bundle $\mcL$. 
Since $\det \mcF=\mcO(-K_{X})$, $-K_{X}$ is divided by $2$ in $\Pic(X)$ and $\mcL=\mcO_{X}(\frac{-K_{X}}{2})$. 
Hence it holds that $\mcF \simeq \mcO_{X}^{\oplus r-2} \oplus \mcO_{X}(\frac{-K_{X}}{2})^{\oplus 2}$. 
Since $\mcF$ is not globally generated, so is $\mcO_{X}(\frac{-K_{X}}{2})$.
By the classification of del Pezzo $3$-folds, $X$ is a del Pezzo $3$-fold of degree $1$ (cf. \cite{Fujita80}). 
We complete the proof of Theorem~\ref{F-thm-glgen}. 
\qed
\subsection{Proof of Theorem~\ref{thm-F-chara}}

In this section, we give a corollary as consequences of the above results 
and show Theorem~\ref{thm-F-chara}.
\begin{cor}\label{cor-instanton}
Let $X$ be a del Pezzo $3$-fold of Picard rank $1$. 
Let $\mcE$ be a rank $2$ weak Fano bundle on $X$ with $c_{1}(\mcE)=0$. 
Then the zero scheme of a general global section of $\mcE(1)$ is a connected smooth elliptic curve and $h^{1}(\mcE(-1))=0$. 
Especially, if $\mcE$ is slope stable, then $\mcE$ is an instanton bundle \cite{Kuznetsov2012,Faenzi2014}. 
\end{cor}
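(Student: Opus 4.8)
The plan is to put $\mcF:=\mcE(1)$ and run it through Theorem~\ref{F-thm-glgen}. Since $X$ is a del Pezzo threefold we have $-K_X=2H_X$, so $\det\mcF=\mcO_X(-K_X)$, and the canonical bundle formula for $\P_X(\mcF)=\P_X(\mcE)$ gives $-K_{\P_X(\mcE)}=2\xi_\mcF$. Hence $\mcE$ being weak Fano says exactly that $\mcF$ is nef and that $\xi_\mcF^{4}=c_1(\mcF)^3-2c_1(\mcF)c_2(\mcF)+c_3(\mcF)>0$, with $c_3(\mcF)=0$ because $\rk\mcF=2$. So the hypotheses of Theorem~\ref{F-thm-glgen} hold, and $\mcF$ is globally generated unless $X$ has degree $1$ and $\mcF\cong\mcO_X(1)^{\oplus 2}$, i.e.\ $\mcE\cong\mcO_X^{\oplus 2}$; that last bundle is not stable, and for it I would verify the statement by hand (a general section of $\mcO_X(1)^{\oplus 2}$ cuts out a smooth Weierstrass cubic through the base point of $|H_X|$, and $h^{1}(\mcO_X(-1))=0$ by Kodaira vanishing since $\mcO_X(-1)=\omega_X\otimes\mcO_X(H_X)$). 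The split bundle $\mcE\cong\mcO_X(1)\oplus\mcO_X(-1)$, whose generic section of $\mcF$ is nowhere zero, is disposed of directly as well. So from now on assume $\mcF$ globally generated and $\mcE$ not of these shapes.

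Then a general $s\in H^{0}(\mcF)$ vanishes in pure codimension $2$ and, by Bertini, its zero scheme $C$ is smooth; one gets $0\to\mcO_X\xrightarrow{s}\mcF\to\mcI_{C/X}(-K_X)\to 0$ together with the canonical isomorphism $\mcN_{C/X}\cong\mcF|_C$. Taking determinants and using $\det\mcF=\mcO_X(-K_X)$, the adjunction formula yields $\omega_C\cong\mcO_C$; since $C$ is smooth, every connected component of $C$ is a smooth curve of genus $1$ and $\chi(\mcO_C)=0$. (That each component has genus $\geq 1$ also follows, as in Claim~\ref{claim-weakFano3}(2), by restricting the displayed sequence to a component.)

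The real content is to show that $C$ is connected. Chasing $0\to\mcO_X\to\mcF\to\mcI_{C/X}(-K_X)\to 0$, the sequence $0\to\mcI_{C/X}\to\mcO_X\to\mcO_C\to 0$, and the Fano vanishings $H^{1}(\mcO_X)=H^{2}(\omega_X)=0$, connectedness is equivalent to $h^{1}(\mcE(-1))=h^{1}(\mcI_{C/X})=h^{0}(\mcO_C)-1=0$, so the two assertions of the corollary are really one. I would bound the number of components thus: bigness of $-K_{\P_X(\mcE)}$ reads $0<\xi_\mcF^{4}=4\big(\deg X-H_X\cdot c_2(\mcE)\big)$, whence $\deg_{H_X}C=H_X\cdot c_2(\mcE)+\deg X\leq 2\deg X-1$. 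When $|H_X|$ is very ample (degree $\geq 3$) each component, being an elliptic curve, has $H_X$-degree $\geq 3$; for $\deg X=3$ this already forces one component, and for $\deg X=4$ a second component would have degree $3$, hence be a plane cubic whose spanning plane lies in $X$ — impossible since $\Pic(X)=\Z H_X$. For the remaining degrees one instead works on the anticanonical K3 surface $\ol S\subset X$ produced by the ladder in the proof of Theorem~\ref{F-thm-glgen} (a general member of $|\mcO_X(-K_X)|$ through $C$): there $C^{2}=0$, $\xi_\mcF|_{\ol S}=\mcO_{\ol S}(-K_X)\otimes\mcO_{\ol S}(-C)$ is base point free and big with $h^{1}=0$, and, using the base-point-freeness of $\mcF$ and the theory of linear systems on K3 surfaces \cite{SD74}, one rules out $C$ being a disjoint union of several elliptic-pencil fibres. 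This exclusion of the disconnected case is the step I expect to be the main obstacle.

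Once $C$ is connected it is a connected smooth genus-$1$, i.e.\ elliptic, curve, and $h^{1}(\mcE(-1))=h^{0}(\mcO_C)-1=0$. Finally, on a del Pezzo threefold $\tfrac{K_X}{2}=-H_X$, so $H^{1}\!\big(\mcE(\tfrac{K_X}{2})\big)=H^{1}(\mcE(-1))=0$; hence if $\mcE$ is in addition stable it satisfies the defining properties of an instanton bundle \cite{Kuznetsov2012,Faenzi2014}, which completes the proof.
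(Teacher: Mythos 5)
Your overall strategy coincides with the paper's: apply Theorem~\ref{F-thm-glgen} to $\mcF=\mcE(1)$, handle the exceptional case $\mcO_X^{\oplus 2}$ on a degree-one $X$ by hand, cut out $C=Z(s)$, get $\omega_C\simeq\mcO_C$ by adjunction, reduce everything to the connectedness of $C$, and then read off $h^{1}(\mcE(-1))=h^{1}(\mcI_{C/X})=0$. All of that is correct, and your observation that the two assertions of the corollary collapse into the single statement $h^{0}(\mcO_C)=1$ is exactly right.

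However, there is a genuine gap at the step you yourself flag as ``the main obstacle'': the exclusion of a disconnected $C$. Your ad hoc degree count ($\deg C\leq 2d-1$ plus ``each elliptic component has degree $\geq 3$'') only closes the cases $d=3$ and $d=4$; for $d=5$ it leaves open splittings such as $4+5$, and for $d=1,2$ (where $H_X$ is not very ample) it says nothing, while the K3-surface argument you sketch for the remaining degrees is not carried out. The paper closes this uniformly with Lemma~\ref{lem-degell1}(2), which you did not invoke: for any elliptic curve $B$ on a Fano $3$-fold of Picard rank $1$ and index $\leq 2$, the Hartshorne--Serre bundle $\mcG$ attached to $B$ (an extension of $\mcI_B(-K_X)$ by $\mcO_X$) is slope semistable, and the Bogomolov inequality $\bigl(c_1(\mcG)^2-4c_2(\mcG)\bigr).(-K_X)\leq 0$ gives $(-K_X).B\geq\tfrac14(-K_X)^3$, i.e.\ $H_X.B\geq d$ on a del Pezzo $3$-fold of degree $d$. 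Combined with your bound $\deg_{H_X}C<2d$, this forces $C$ to have a single component in every degree at once. If you insert that lemma in place of your case analysis, your proof becomes complete and is essentially the paper's.
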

\begin{proof}
First, we treat the case $X$ is of degree $1$ and $\mcE=\mcO_{X}^{\oplus 2}$. 
In this case, it is well-known that $X$ is a smooth sextic hypersurface of $\P(1,1,1,2,3)$ \cite{Fujita80} and hence the zero scheme of a general section $s \in H^{0}(\mcE(1))=H^{0}(\mcO_{X}(1))^{\oplus 2}$, which is a general complete intersection of two members of $|\mcO_{X}(1)|$, is a connected smooth elliptic curve.
%In this case, it is well-known that the zero scheme of a general section $s \in H^{0}(\mcE(1))=H^{0}(\mcO_{X}(1))^{2}$ is a connected smooth elliptic curve. 
Hence, by Theorem~\ref{F-thm-glgen}, we may assume that $\mcF := \mcE(1)$ is globally generated. 
Let $C$ be the zero scheme of a general $s \in H^{0}(\mcF)$. 
Since $C$ is a smooth curve with $\omega_{C} \simeq \mcO_{C}$, $C$ is a disjoint union of smooth elliptic curves.
We show that $C$ is connected. 
Let $d$ be the degree of $X$ and $\xi_{\mcF}$ be the tautological bundle of $\P_{X}(\mcF)$. 
Since $\xi_{\mcF}^{4} = c_{1}(\mcF)^{3}-2c_{1}(\mcF)c_{2}(\mcF)=8d-4\deg C>0$, 
we have $\deg C < 2d$. 
By Lemma~\ref{lem-degell1}~(2), 
the degree of every connected component of $C$ is greater than or equal to $d$. 
Therefore, $C$ must be connected. 
Using an exact sequence $0 \to \mcO \to \mcF \to \mcI_{C}(2) \to 0$, 
we have $H^{1}(\mcE(-1))=H^{1}(\mcF(-2))=H^{1}(\mcI_{C})=0$ since $C$ is a smooth connected curve and $H^{1}(\mcO_{X})=0$. 
Therefore, if $\mcE$ is slope stable, then $\mcE$ is an instanton bundle by definition. 
\end{proof}

\begin{comment}
\begin{cor}\label{cor-minimal-instanton}
Let $X$ be a del Pezzo $3$-fold of degree $3 \leq d \leq 5$ and $\mcE$ a vector bundle on $X$. 
The following conditions are equivalent. 
\begin{enumerate}
\item $\mcE$ is a minimal instanton bundle, which is defined to be a slope stable vector bundle $\mcE$ of rank $2$ with $c_{1}(\mcE)=0$, $c_{2}(\mcE)=2$, and $h^{1}(\mcE(-1))=0$ \cite{Kuznetsov2012,Faenzi2014}. 
\item $\mcE(1)$ is a special Ulrich bundle of rank $2$, which is equivalent to say that $\rk \mcE=2$, $c_{1}(\mcE)=0$, and $H^{\bullet}(X,\mcE(-j))=0$ for $j \in \{0,1,2\}$ by definition \cite[Theorem~2.3, Definition]{Beauville}. 
\item $\mcE$ is a unique extension of $\mcI_{C/X}(1)$ by $\mcO_{X}(-1)$, where $C$ is a non-degenerate smooth elliptic curve of degree $d+2$ in $X \subset \P^{d+1}$. 
\item $\mcE$ is a rank $2$ weak Fano bundle with $c_{1}(\mcE)=0$ and $c_{2}(\mcE)=2$. 
\end{enumerate}
\end{cor}
\begin{proof}
(1) $\ra$ (2) follows from \cite[Lemma~3.1]{Kuznetsov2012}. 
(2) $\ra$ (3) is \cite[Remark~6.3.3]{Beauville}. 
We show (3) $\ra$ (4). 
Note that every non-degenerate elliptic curve $C \subset \P^{d+1}$ of degree $d+2$ is defined by quadratic equations (cf. \cite{Hulek}). 
Hence for every bundle $\mcE$ in (3), $\mcE(1)$ is globally generated. 
Hence $-K_{\P(\mcE(1))}=2\xi_{\mcE(1)}$ is nef. 
Noting that $(\xi_{\mcE(1)})^{4}=4d-8>0$, we conclude that $\mcE$ is a weak Fano bundle. 
(4) $\ra$ (1) follows from Corollary~\ref{cor-instanton}. 
We complete the proof. 
\end{proof}
\end{comment}

\begin{proof}[Proof of Theorem~\ref{thm-F-chara}]
The implication (1) $\ra$ (2) follows from Corollary~\ref{cor-instanton} and Theorem~\ref{F-thm-glgen}. 
The implication (2) $\ra$ (1) for (vi) (resp. (vii)) holds since it follows from \cite{Hulek} (resp. our assumption in (vii)) that the curve $C$ can be defined by quadratic equations. 
We complete the proof. 
\end{proof}

\section{Existence: Proof of Theorem~\ref{thm-existence}}
In this section, we show the existence of vector bundles for each case in Theorem~\ref{mthm} on an arbitrary del Pezzo $3$-fold of degree $4$. 
The existence of the cases (i-iii) is clear, and that of the case (iv) and (v) was proved in Proof of Theorem~\ref{thm-I1} in Section~\ref{sec-I1} and Proof of Theorem~\ref{thm-H} in Section~\ref{sec-H} respectively. 
%The existence of the cases (i-vi) was already proved. 
The existence of the case (vi) is equivalent to the existence of a special Ulrich bundle of rank $2$ (cf. Remark~\ref{rem-main}~(3)), which is proved by \cite[Proposition 6.1]{Beauville}. 
Thus, the remaining case is only (vii). The main purpose of this section is to prove the existence of a bundle belonging to (vii), which is equivalent to the following theorem by the Hartshorne-Serre correspondence.

\begin{thm}[$=$Theorem~\ref{thm-existence} for (vii)]\label{7-mainthm}
Let $X \subset \P^{5}$ be an arbitrary smooth complete intersection of two hyperquadrics. 
Then $X$ contains an elliptic curve $C$ of degree $7$ defined by quadratic equations. 
\end{thm}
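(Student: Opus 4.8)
The plan is to produce the curve $C \subset X$ by a smoothing/degeneration argument, following the strategy already outlined in the introduction (cf. \cite[Lemma~6.2]{Beauville}, \cite{Hartshorne-Hirschowitz}). First I would reduce the problem to a cohomological statement about curves in $\P^{5}$: by the characterization that a non-degenerate septic elliptic curve in $\P^{5}$ is defined by quadratic equations if and only if it has no trisecant lines (Proposition~\ref{7-stability}), it suffices to exhibit, inside the given $X$, a smooth elliptic curve of degree $7$ with no trisecants. The idea is then to write such a curve as a flat limit of a reducible nodal curve $C_{0} = C' \cup C''$ lying on $X$, where $C'$ is a smooth conic and $C''$ is a smooth elliptic curve of degree $5$ meeting $C'$ transversally in two points, chosen so that $C_{0}$ has arithmetic genus $1$ and no trisecants; one then smooths $C_{0}$ inside $X$.

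The key steps, in order, would be: (1) find the degree-$5$ elliptic curve $C''$ on $X$ — for instance as the zero locus of a section of a minimal instanton bundle (type (vi)) twisted appropriately, or by a direct construction inside a hyperplane section del Pezzo surface, noting that a degree-$5$ elliptic curve in $\P^{4}$ is projectively normal and has no trisecants; (2) find a conic $C'$ on $X$ meeting $C''$ in exactly two points, with the union in general position — a general hyperplane section is a quartic del Pezzo surface, and one can choose $C'$ in a suitable conic class on that surface passing through two general points of $C''$; (3) check that $C_{0} = C' \cup C''$ has arithmetic genus $1$ (the conic contributes no genus, and the two nodes raise $p_{a}$ by $2$ from the genus contributions, so one arranges the numerics so that $p_{a}(C_{0}) = 1$) and no trisecant line — this is the combinatorial heart: a trisecant of $C_{0}$ would have to be a trisecant of $C''$ (impossible), a trisecant of $C'$ (impossible, $C'$ is a conic), or a line meeting the two components in a total of three points through the nodes, which one rules out by the general position of the gluing points; (4) verify $H^{1}(X, \mcN_{C_{0}/X}) = 0$ (or at least that the Hilbert scheme is smooth of the expected dimension at $[C_{0}]$ and the general deformation is irreducible), so that $C_{0}$ smooths to an irreducible smooth elliptic curve $C$ of degree $7$ on $X$; (5) observe that having no trisecants is an open condition, so the general smoothing $C$ still has no trisecants and is therefore defined by quadratic equations by Proposition~\ref{7-stability}, and $C$ is automatically non-degenerate since a degenerate septic elliptic curve would lie in a hyperplane section, whose Picard lattice does not admit such a class.

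The main obstacle I expect is Step (4) together with the need to keep the "no trisecant" condition alive through the smoothing. Showing $H^{1}(\mcN_{C_{0}/X}) = 0$ for the nodal curve requires controlling the normal bundle of a reducible curve with two nodes on a Fano threefold; one typically uses the exact sequences relating $\mcN_{C_{0}/X}$ to $\mcN_{C'/X}$, $\mcN_{C''/X}$ and the local contributions at the nodes, and one must verify the relevant vanishings for $C'$ and $C''$ separately (here the projective normality of the degree-$5$ elliptic curve and the simplicity of the conic help) plus a transversality/general-position input at the two nodes. The secondary subtlety is that smoothness of the Hilbert scheme alone does not force the generic deformation to be \emph{irreducible}; one must argue that the reducible locus has positive codimension in the component of $[C_{0}]$, e.g. by a dimension count showing $\hom(\mcN_{C_{0}/X})$ strictly exceeds the dimension of the family of such reducible configurations, which is exactly where the existence statement becomes non-trivial and where the hypothesis that $X$ is an \emph{arbitrary} quartic del Pezzo threefold (rather than a general one) needs care.
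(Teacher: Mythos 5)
Your overall strategy is the one the paper uses --- reduce to ``no trisecants'' via Proposition~\ref{7-stability}, degenerate to the union of a conic and a quintic elliptic curve, smooth inside $X$, and use openness of the no-trisecant condition --- but there is a concrete numerical error that breaks the construction. If the conic $C'$ and the smooth elliptic quintic $C''$ meet transversally at \emph{two} points, then the nodal curve $C_{0}=C'\cup C''$ has arithmetic genus $p_{a}(C_{0})=g(C')+g(C'')+2-1=0+1+2-1=2$, not $1$; arithmetic genus is constant in a flat family, so any smoothing of your $C_{0}$ is a smooth curve of degree $7$ and genus $2$, never an elliptic curve. There is no way to ``arrange the numerics'' here: with these two components you must glue at exactly \emph{one} point, which gives $p_{a}=0+1+1-1=1$ and Hilbert polynomial $7t$. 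This is what the paper does (Lemma~\ref{lem-constCG}(i): $C\cap\G$ is a single reduced point), and the one-point configuration also changes the trisecant analysis --- the line to be excluded is a bisecant of the quintic meeting the conic, which is ruled out by choosing the hyperplane section $\braket{S}$ through the gluing point so that $S$ contains no lines through $S\cap\G$.

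Two secondary points. First, your non-degeneracy argument is false as stated: the Picard lattice of a hyperplane section of $X$ (a quartic del Pezzo surface) \emph{does} contain classes of elliptic curves of degree $7$, e.g.\ $3h-e_{1}-e_{2}$ with $D^{2}=-K\cdot D=7$; non-degeneracy of the smoothed curve should instead be deduced from the fact that the central fiber $C\cup\G$ already spans $\P^{5}$ (the quintic spans the hyperplane $\braket{S}$ and the conic is not contained in it), spanning being an open condition. Second, your worry about $H^{1}(\mcN_{C_{0}/X})$ and about irreducibility of the generic deformation is handled more cheaply than you anticipate: the strong smoothability criterion of Hartshorne--Hirschowitz (Theorem~4.1 of \cite{Hartshorne-Hirschowitz}) only requires the component-wise vanishings $H^{1}(\mcN_{C''/X}(-x_{1}))=0$ (already checked in \cite{Beauville}) and $H^{1}(\mcN_{C'/X})=0$ (from the normal bundle sequence inside a smooth quadric containing $X$), and the general fiber of the resulting family is smooth and connected, hence irreducible, so no dimension count against the reducible locus is needed.
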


The key proposition is the following. 
\begin{prop}\label{7-stability}
Let $C \subset \P^{5}$ be an elliptic curve of degree $7$. 
Then the following assertions are equivalent. 
\begin{enumerate}
\item $C$ is defined by quadratic equations. 
\item $C$ has no trisecants. 
\item For every rank $3$ subsheaf $\mcE$ of $\Omega_{\P^{5}}(1)|_{C}$, we have $\mu(\mcE)<-7/5$. 
In this article, we call this property the \emph{slope stability for rank $3$ subsheaves}. 
\end{enumerate}
\end{prop}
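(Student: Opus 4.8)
The plan is to prove the cycle of implications $(1) \Rightarrow (2) \Rightarrow (3) \Rightarrow (1)$, with the passage $(3) \Rightarrow (1)$ being the conceptual heart, carried out via Mukai's vector-bundle criterion for being cut out by quadrics. First, for $(1) \Rightarrow (2)$: if $C$ has a trisecant line $\ell$, then any quadric $Q$ containing $C$ meets $\ell$ in at least three points, hence contains $\ell$; thus the intersection of all quadrics through $C$ contains $\ell \not\subset C$, so $C$ is not defined by quadrics. This is elementary. For $(2) \Rightarrow (3)$: I would argue contrapositively. A rank $3$ subsheaf $\mcE \subset \Omega_{\P^5}(1)|_C$ with $\mu(\mcE) \geq -7/5$ produces, after passing to its saturation and taking determinants, a sub-line-bundle of $\det(\Omega_{\P^5}(1)|_C) = \mcO_C(-6 + \deg)$-type data; unwinding the Euler sequence $0 \to \Omega_{\P^5}(1)|_C \to \mcO_C(-1)^{\oplus 6} \to \mcO_C \to 0$, such a subsheaf of low slope forces a linear dependence among the $6$ coordinate functions restricted to a degree-$3$ subscheme (an effective divisor $D$ of degree $\le 3$), i.e.\ produces collinear points. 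The cleanest version is to test the numerical bound on the specific rank $3$ kernel sheaf associated to a divisor $D$ of degree $3$ on $C$: the three points of $D$ span a $\P^2$ iff no destabilizing subsheaf appears, and they are collinear (a trisecant, possibly infinitely near) exactly when $\mu$ crosses the threshold $-7/5$.

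**The main step $(3) \Rightarrow (1)$.** Here I would invoke Mukai's technique as advertised in the introduction. The idea: $C \subset \P^5 = \P(V)$ with $V = H^0(C,\mcO_C(1))^\vee$, $\deg \mcO_C(1) = 7$, $h^0 = 6$. The homogeneous ideal of $C$ is generated in degree $2$ if and only if the natural multiplication map $\Sym^2 H^0(\mcO_C(1)) \to H^0(\mcO_C(2))$ is surjective \emph{and} the "first syzygies" behave correctly; Mukai reformulates the relevant vanishing as $H^1$ of a family of vector bundles on $C$ built from $\Omega_{\P^5}(1)|_C$. Concretely, being cut out by quadrics is governed by $H^1(C, \bigwedge^2 M_{\mcO_C(1)} \otimes \mcO_C(1)) = 0$ (or the analogous statement for the bundle $M = \Omega_{\P^5}(1)|_C$, the restricted Koszul/kernel bundle), and by Serre duality with $\omega_C = \mcO_C$ this is equivalent to the vanishing of $H^0$ of the dual, which in turn is controlled by the nonexistence of destabilizing rank $3$ subsheaves of slope $\ge -7/5$. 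So the plan is: (a) identify the precise cohomology group whose vanishing is equivalent to $(1)$ — presumably $H^1$ of a twist of $\bigwedge^2 \Omega_{\P^5}(1)|_C$ or of $M \otimes \mcO_C(1)$; (b) dualize using $\omega_C \simeq \mcO_C$ to rewrite it as a $\Hom$/$H^0$ statement; (c) observe that a nonzero element of that $\Hom$ gives precisely a rank $3$ subsheaf of $\Omega_{\P^5}(1)|_C$ violating the slope bound in $(3)$, and conversely.

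**Expected obstacle.** The hard part will be step (a)–(b) of $(3) \Rightarrow (1)$: pinning down exactly which rank and twist of bundle on $C$ encodes "defined by quadratic equations," and checking that its (co)homology dualizes cleanly to the slope condition for \emph{rank $3$} subsheaves of $\Omega_{\P^5}(1)|_C$ — the numerology $\mu < -7/5 = -7/5$ must come out exactly, which requires care with degrees ($\deg \Omega_{\P^5}(1)|_C = 7\cdot 5 - 6\cdot 7 = -7$, so slope $-7/5$ on rank $3$ is the threshold for a quotient/sub to match) and with the saturation of subsheaves. A secondary subtlety is handling the degenerate (infinitely-near) trisecants — tangent trisecant lines and triple points — uniformly with honest trisecants; I would treat "trisecant" scheme-theoretically throughout, i.e.\ the existence of a length-$3$ subscheme $Z \subset C$ spanning only a line, so that the equivalences hold verbatim in all cases. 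Once the dictionary between subsheaves, trisecant subschemes, and the quadric-generation cohomology is set up, each implication is a short computation.
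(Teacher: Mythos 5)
Your overall architecture matches the paper's: $(1)\ra(2)$ elementary, $(2)\ra(3)$ by showing a destabilizing rank~$3$ subsheaf produces a trisecant, $(3)\ra(1)$ via Mukai's technique. But both nontrivial implications have genuine gaps as sketched. First, a numerical error that hides the key input: $h^{0}(C,\mcO_{C}(1))=7$ (Riemann--Roch on an elliptic curve), not $6$, so $C\subset\P^{5}$ is cut out by a codimension-one subspace of sections and is not linearly normal. The paper's proof of $(2)\ra(3)$ runs entirely through this fact: it realizes $C\subset\P^{5}$ as the projection of the elliptic normal curve $C\subset\P^{6}$ from an external point, giving $0\to\Omega_{\P^{5}}(1)|_{C}\to\Omega_{\P^{6}}(1)|_{C}\to\mcO_{C}\to 0$, and then uses the stability of $\Omega_{\P^{6}}(1)|_{C}$ (Brenner--Hein) to sandwich the slope of a destabilizing subsheaf $\mcF$ between $-7/5$ and $-7/6$, forcing $\deg\mcF=-4$ exactly and $\mcF$ saturated. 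Only with that precise degree does the dualization ($h^{0}(\mcF^{\vee})=4$, kernel a line bundle $\mcO_{C}(-\eta)$ with $\deg\eta=4$, hence a line $l$ with $\deg\mcI_{l\cap C/C}=-3$) produce an honest trisecant. Your Euler-sequence "linear dependence on a degree-$3$ subscheme" sketch has no mechanism to pin down this degree and would not close.

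Second, for $(3)\ra(1)$ your proposed criterion $H^{1}(\bigwedge^{2}M_{\mcO_{C}(1)}\otimes\mcO_{C}(1))=0$ is the Koszul-cohomology test for generation of the homogeneous ideal in degree $2$ of a projectively normal embedding; it is not the right statement here, both because $C\subset\P^{5}$ is not projectively normal and because the paper's notion of "defined by quadratic equations" is the scheme-theoretic one, namely base-point-freeness of $|2H-E|$ on $\Bl_{C}\P^{5}$. Mukai's actual criterion is pointwise: for each $x\in\Bl_{C}\P^{5}$ there is a rank~$4$ subsheaf $\mcE_{x}\subset\Omega_{\P^{5}}(1)|_{C}$ with $\det\mcE_{x}=\mcO_{C}(-1)$, and $H^{1}(\mcE_{x}\otimes\xi)=0$ for a single degree-$2$ line bundle $\xi$ already gives freeness at $x$. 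The paper then decomposes $\mcE_{x}$ into Atiyah indecomposables and runs a case analysis over the rank partitions $(1,1,1,1),(1,1,2),(1,3),(2,2),(4)$; hypothesis $(3)$ is used at exactly one point, to force the degree of a rank~$3$ summand down to $-5$ in the $(1,3)$ case (otherwise the complementary line-bundle summand has degree $-3$ and $H^{1}$ cannot vanish for any $\xi$). A single Serre-duality step from one cohomology group to "no destabilizing rank~$3$ subsheaf," as you propose, does not reproduce this: the duality of $H^{1}(\mcE_{x}\otimes\xi)$ sees line subbundles of $\mcE_{x}^{\vee}$, and converting that into the rank~$3$ condition on $\Omega_{\P^{5}}(1)|_{C}$ requires the genericity of $\xi$ and the full case analysis. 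You correctly flagged this as the expected obstacle; it is indeed where the proof lives.
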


By Proposition~\ref{7-stability}, it suffices to show that every smooth complete intersection $X$ of two hyperquadrics in $\P^{5}$ contains an elliptic curve $C$ of degree $7$ having no trisecants. 
We will obtain such a curve by smoothing the union of a conic and an elliptic curve of degree $5$. 

\subsection{Mukai's technique}

Our proof of the implication (3) $\ra$ (1) in Proposition~\ref{7-stability} is based on Mukai's technique \cite{Mukai}.
We quickly review his technique. 

Let $C$ be an elliptic curve of degree $7$ in $\P^{5}$. 
Let $\s \colon \wt{\P}:=\Bl_{C}\P^{5} \to \P^{5}$ be the blowing-up. 
Set $H:=\s^{\ast}\mcO_{\P^{5}}(1)$, $E=\Exc(\s)$, and $e:=\s|_{E}$. 
Let $i \colon C \hra \P^{5}$ and $j \colon E \hra \wt{\P}$ be the inclusions. 
Note that $|2H-E|$ is base point free if and only if $C$ is defined by quadratic equations. 
Consider the morphism 
$i \times \s \colon C \times \wt{\P} \to \P^{5} \times \P^{5}$. 
We denote the diagonal in $\P^{5} \times \P^{5}$ by $\Delta$ and its pull-back via $i \times \s$ by $\wt{E}$. 
Note that $\wt{E}$ is isomorphic to $E$. 
On $\P^{5} \times \P^{5}$, there is the following natural exact sequence 
$\Omega_{\P^{5}}(1) \boxtimes \mcO_{\P^{5}}(-1) \to \mcI_{\Delta/\P^{5} \times \P^{5}} \to 0$. 
Taking the pull-back under $i \times \s$, we have $\Omega_{\P^{5}}(1)|_{C} \boxtimes \mcO_{\wt{\P}}(-H) \to \mcI_{\wt{E}/C \times \wt{\P}} \to 0$. 
Since $\wt{E}$ is of codimension $2$ in $C \times \wt{\P}$, the kernel of the above surjection, say $\mcE$, is locally free. 
Thus, we obtain an exact sequence $0 \to \mcE \to \Omega_{\P^{5}}(1)|_{C} \boxtimes \mcO_{\Bl_{C}\P^{5}}(-H) \to \mcI_{\wt{E}/C \times \Bl_{C}\P^{5}} \to 0$ on $C \times \Bl_{C}\P^{5}$. %\label{ex-Mukai}
For each closed point $x \in \Bl_{C}\P^{5}$, 
we set 
\begin{align}\label{def-Ex}
\mcE_{x}:=\mcE|_{C \times \{x\}}
\end{align}
and regard it as a locally free sheaf on $C$. 
In this setting, Mukai \cite{Mukai} proved the following theorem: 

\begin{thm}\label{thm-Mukai}\cite[Lemma~1 and Lemma~2]{Mukai}
Fix $x \in \Bl_{C}\P^{5}$. 
If $H^{1}(\mcE_{x} \otimes \xi)=0$ for a line bundle $\xi$ on $C$ of degree $2$, 
then the complete linear system $|2H-E|$ is free at $x$. 
Moreover, $\mcE_{x}$ is a rank $4$ subsheaf of $\Omega_{\P^{5}}(1)|_{C}$ such that $\det \mcE_{x}=\mcO_{C}(-1)$. 
\end{thm}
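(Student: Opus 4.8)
The plan is to analyse the universal exact sequence
\[0 \to \mcE \to \Omega_{\P^{5}}(1)|_{C}\boxtimes\mcO_{\wt{\P}}(-H) \to \mcI_{\wt{E}/C\times\wt{\P}} \to 0\]
on $C\times\wt{\P}$ by restricting it to a slice $C\times\{x\}\cong C$, and then to read off both assertions from the restricted sequence. The two pieces of input are: (a) the description of $\wt{E}$ as the graph of $e\colon E\to C$, so that the scheme $\wt{E}\cap(C\times\{x\})$ is empty when $x\notin E$ and a single fat point lying over $e(x)$ when $x\in E$; and (b) cohomology and base change along $\pr_{2}\colon C\times\wt{\P}\to\wt{\P}$, which is particularly clean here because $C$ is elliptic, so that $\chi(C,\mcE_{x}\otimes\xi)=\deg(\mcE_{x}\otimes\xi)$ is a constant independent of $x$.

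For the assertion that $\mcE_{x}$ is a rank $4$ subsheaf of $\Omega_{\P^{5}}(1)|_{C}$ with $\det\mcE_{x}=\mcO_{C}(-1)$: since $\mcE$ is locally free it is flat over $\wt{\P}$, so $\mcE_{x}:=\mcE|_{C\times\{x\}}$ is locally free of rank $\rk\mcE=4$. Restricting the universal sequence to $C\times\{x\}$ gives a long $\mathcal{T}or$ sequence whose only troublesome term is $\mathcal{T}or_{1}(\mcI_{\wt{E}},\mcO_{C\times\{x\}})\cong\mathcal{T}or_{2}(\mcO_{\wt{E}},\mcO_{C\times\{x\}})$; because $\wt{E}$ is a smooth (hence Cohen--Macaulay) subscheme of codimension $2$ and $C\times\{x\}\not\subset\wt{E}$, this is the top homology of a length-two Koszul complex over the integral scheme $C\times\{x\}$ and so vanishes. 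Hence $\mcE_{x}\hookrightarrow\Omega_{\P^{5}}(1)|_{C}$ is injective, and we obtain $0\to\mcE_{x}\to\Omega_{\P^{5}}(1)|_{C}\to\mcQ_{x}\to0$ with $\mcQ_{x}=\mcI_{\wt{E}}\otimes\mcO_{C\times\{x\}}$ of rank $1$. When $x\notin E$ this $\mcQ_{x}$ is simply $\mcO_{C}$ (the evaluation-at-$\s(x)$ quotient); when $x\in E$ a local Koszul computation at $e(x)$ shows that the torsion subsheaf $T$ of $\mcQ_{x}$ and the rank-one locally free quotient $\mcQ_{x}/T$ have opposite determinants, so that $\det\mcQ_{x}=\mcO_{C}$ in every case. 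Taking determinants in the short exact sequence then gives $\det\mcE_{x}=\det(\Omega_{\P^{5}}(1)|_{C})=\mcO_{C}(-1)$, using $\det\Omega_{\P^{5}}(1)=\mcO_{\P^{5}}(-1)$.

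For the base-freeness assertion, the starting point is $\s_{\ast}\mcO_{\wt{\P}}(2H-E)=\mcI_{C}(2)$, so that $H^{0}(\wt{\P},2H-E)=H^{0}(\P^{5},\mcI_{C}(2))$ and $x\in\Bs|2H-E|$ means exactly that the quadrics through $C$ fail to separate $x$, i.e.\ that $\s(x)$ lies on every quadric through $C$ (if $x\notin E$) or that their second fundamental forms miss the normal direction $x$ (if $x\in E$). I would encode this evaluation as a multiplication of global sections on $C$ by twisting the universal sequence by $\pr_{1}^{\ast}\xi\otimes\pr_{2}^{\ast}\mcO_{\wt{\P}}(2H-E)$ (or a closely related line bundle) and pushing forward along $\pr_{2}$. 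Cohomology and base change, together with $\chi(C,\mcE_{x}\otimes\xi)=\deg(\mcE_{x}\otimes\xi)=-7+8=1$ and upper semicontinuity of $h^{1}$, show that $\pr_{2\ast}(\mcE\otimes\pr_{1}^{\ast}\xi)$ is a line bundle and $R^{1}\pr_{2\ast}(\mcE\otimes\pr_{1}^{\ast}\xi)=0$ in a neighbourhood of $x$ precisely when $H^{1}(C,\mcE_{x}\otimes\xi)=0$; in that case the surjection appearing in the pushed-forward sequence then supplies the non-degeneracy of the evaluation at $x$, i.e.\ freeness of $|2H-E|$ at $x$. The hypothesis on $\xi$ enters through the degree: with $\deg\xi=2$ the twist $\mcE_{x}\otimes\xi$ has the critical Euler characteristic $1$, and since $\omega_{C}\simeq\mcO_{C}$, Serre duality turns $H^{1}(C,\mcE_{x}\otimes\xi)=0$ into $H^{0}(C,\mcE_{x}^{\vee}\otimes\xi^{-1})=0$, the form in which the vanishing drives the surjectivity.

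The main obstacle is the case $x\in E$. There $C\times\{x\}$ meets $\wt{E}$ non-transversally, so one must track the $\mathcal{T}or$-sheaves carefully to obtain both the injectivity $\mcE_{x}\hookrightarrow\Omega_{\P^{5}}(1)|_{C}$ and the exact numerical cancellation that forces $\det\mcQ_{x}=\mcO_{C}$; and on the geometric side one must correctly match the infinitesimal normal-direction data cut out by quadrics through $C$ with the cohomology of $\mcE_{x}$. Away from $E$ everything reduces to the familiar statement that $\mcI_{C}(2)$ is globally generated at $\s(x)$, which is comparatively routine.
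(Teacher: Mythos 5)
Your treatment of the second assertion (that $\mcE_{x}$ is a rank $4$ subsheaf of $\Omega_{\P^{5}}(1)|_{C}$ with $\det\mcE_{x}=\mcO_{C}(-1)$) is essentially sound: the vanishing of $\mathcal{T}or_{2}(\mcO_{\wt{E}},\mcO_{C\times\{x\}})$ via the length-two Koszul resolution together with $C\times\{x\}\not\subset\wt{E}$ does give the injectivity $\mcE_{x}\hookrightarrow\Omega_{\P^{5}}(1)|_{C}$, and the determinant computation is correct (it can be shortcut: $\det\mcE|_{C\times\{x\}}$ is independent of $x$ because $\Pic(C\times\wt{\P})\simeq\Pic(C)\times\Pic(\wt{\P})$, so one may compute for $x\notin E$, where the quotient is $\mcO_{C}$).

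The gap is in the first and main assertion. You correctly twist the universal sequence by $\pr_{1}^{\ast}\xi$, push forward along $\pr_{2}$, and deduce from $H^{1}(\mcE_{x}\otimes\xi)=0$ that the map $\alpha\colon H^{0}(\Omega_{\P^{5}}(1)|_{C}\otimes\xi)\otimes\mcO_{\wt{\P}}(-H)\to{\pr_{2}}_{\ast}(\mcI_{\wt{E}/C\times\wt{\P}}\otimes\pr_{1}^{\ast}\xi)$ is surjective at $x$. But you then assert that "the surjection \dots supplies the non-degeneracy of the evaluation at $x$" without ever producing a section of $|2H-E|$, let alone one generating $\mcO(2H-E)$ at $x$. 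The missing mechanism --- which is the actual content of Mukai's lemma and of the argument the paper relies on --- is determinantal. From $0\to\mcI_{\wt{E}}\otimes\pr_{1}^{\ast}\xi\to\pr_{1}^{\ast}\xi\to\pr_{1}^{\ast}\xi|_{\wt{E}}\to0$ one identifies ${\pr_{2}}_{\ast}(\mcI_{\wt{E}}\otimes\pr_{1}^{\ast}\xi)$ with the kernel of the evaluation $\beta\colon H^{0}(C,\xi)\otimes\mcO_{\wt{\P}}\to j_{\ast}e^{\ast}\xi$, which is surjective since $\xi$ is globally generated. Because $h^{0}(C,\xi)=2$ (this is precisely where $\deg\xi=2$ and $g(C)=1$ enter), the complex $\mcO(-H)^{\oplus N}\xrightarrow{\alpha}\mcO^{\oplus 2}\xrightarrow{\beta}j_{\ast}e^{\ast}\xi\to0$ is exact at $x$, and the image of $\bigwedge^{2}\alpha\colon\bigwedge^{2}H^{0}(\Omega_{\P^{5}}(1)|_{C}\otimes\xi)\otimes\mcO(-2H)\to\mcO_{\wt{\P}}$ is the Fitting ideal of the line bundle $j_{\ast}e^{\ast}\xi$ on the smooth divisor $E$, i.e.\ equals $\mcI_{E}=\mcO(-E)$ in a neighbourhood of $x$. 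The $2\times2$ minors of $\alpha$ are thus global sections of $\mcO(2H-E)$ that generate it at $x$, which is the required freeness. Without this step (or an equivalent), the hypothesis $H^{1}(\mcE_{x}\otimes\xi)=0$ never connects to $\Bs|2H-E|$; the twist by $\pr_{2}^{\ast}\mcO_{\wt{\P}}(2H-E)$ "or a closely related line bundle" that you propose does not substitute for it.
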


\subsection{Proof of Proposition~\ref{7-stability}}
The implication (1) $\ra$ (2) is obvious. 

We show the implication (2) $\ra$ (3). 
Let $\Phi \colon C \hra \P^{6}$ be the embedding given by $|\mcO_{C}(1)|$. 
Then there is a point $p \in \P^{6} \setminus \Phi(C)$ such that the image of $\Phi(C)$ by the projection from $p$ is $C \subset \P^{5}$. 
Hence we obtain the following exact sequence: 
\begin{align}\label{ex-proj-7}
0 \to \Omega_{\P^{5}}(1)|_{C} \to \Omega_{\P^{6}}(1)|_{C} \to \mcO_{C} \to 0.
\end{align}
Assume that $\Omega_{\P^{5}}(1)|_{C}$ has a rank $3$ subsheaf $\mcF$ such that $\mu(\mcF) \geq -7/5$. 
Since $\mcF$ is also a subsheaf of a slope stable bundle $\Omega_{\P^{6}}(1)|_{C}$ \cite[Theorem~1.3]{Brenner-Hein}, we have 
$-7/5 \leq \mu(\mcF) < -7/6$, which implies that $\deg \mcF=-4$. 
%Hence we have $-21/5 \leq \deg \mcF < -7/2$, 
%which implies that $\deg \mcF=-4$. 
Let $\wh{\mcF} \subset \Omega_{\P^{5}}(1)|_{C}$ be the saturation of $\mcF$ in $\Omega_{\P^{5}}(1)|_{C}$. 
Since $\wh{\mcF}$ is also a rank $3$ subsheaf of $\Omega_{\P^{6}}(1)|_{C}$, 
we also have $\deg(\wh{\mcF})=-4$ by the same argument as above. 
Hence it follows that $\wh{\mcF}=\mcF$, i.e., $\mcF$ is saturated. 
Let $\mcG:=\Omega_{\P^{5}}(1)|_{C}/\mcF$, which is locally free. 
Considering the dual, we obtain an exact sequence 
$0 \to \mcG^{\vee} \to T_{\P^{5}}(-1)|_{C} \to \mcF^{\vee} \to 0$. 
We set $V=\C^{6}$ and $\P^{5}:=\P(V)$. 
Note that there is a surjection $V^{\vee} \otimes \mcO_{C} \epm T_{\P^{5}}(-1)|_{C}$. 
Let $I:=\Im(V^{\vee} \subset H^{0}(T_{\P^{5}}(-1)|_{C}) \to H^{0}(\mcF^{\vee}))$ and $K:=\Ker(V^{\vee} \to I)$. 
\begin{comment}
Then we obtain the following diagram: 
\[\xymatrix{
&&0\ar[d]&&\\
&&\mcO_{C}(-1) \ar[d]&& \\
0 \ar[r]&K \otimes \mcO_{C} \ar[r] \ar[d]_{a}&V^{\vee} \otimes \mcO_{C}\ar[r] \ar[d]&I \otimes \mcO_{C}\ar[d]\ar[r]&0 \\
0 \ar[r] &\mcG^{\vee} \ar[r]&T_{\P^{5}}(-1)|_{C} \ar[r] \ar[d]& \mcF^{\vee} \ar[r] & 0 \\
&&0.&&
}\]
\end{comment}
Then we have a surjection $I \otimes \mcO_{C} \epm \mcF^{\vee}$. 
Since $\rk \mcF^{\vee}=3$ and $H^{0}(\mcF^{\vee})=4$, 
we have $I=H^{0}(\mcF^{\vee})$. 
Thus $\Ker(I \otimes \mcO_{C} \to \mcF^{\vee})$ is an invertible sheaf $\mcO(-\eta)$, where $\deg \eta=4$. 
%Hence $\dim K=2$. 
Taking the duals again, we have the following diagram: 
\[\xymatrix{
&&0&0&\\
&&\mcO_{C}(1) \ar[u] &\ar[l]\mcO_{C}(\eta)\ar[u]& \\
0 &K^{\vee} \otimes \mcO_{C} \ar[l] &V \otimes \mcO_{C}\ar[l] \ar[u]&I^{\vee} \otimes \mcO_{C}\ar[l] \ar[u]&\ar[l]0 \\
0 &\mcG \ar[l]&\Omega_{\P^{5}}(1)|_{C} \ar[l] \ar[u]& \mcF \ar[l] \ar[u]& \ar[l]0 \\
&&0\ar[u]&0. \ar[u]&
}\]
Let $l$ be the line corresponding to the injection $I^{\vee} \to V$. 
Under the natural morphism $\e \colon V \otimes \mcO_{\P(V)} \epm \mcO_{\P(V)}(1)$, 
the image of $I^{\vee} \otimes \mcO_{\P(V)}$ is nothing but $\mcI_{l/\P(V)}(1)$. 
Taking the restriction on $C$, we obtain surjections 
$I^{\vee} \otimes \mcO_{C} \epm \mcI_{l/\P(V)}(1)|_{C} \epm \mcI_{l \cap C/C} \otimes \mcO_{C}(1) \simeq \mcO_{C}(\eta)$. 
Hence $\mcI_{l \cap C/C}$ is isomorphic to $\mcO_{C}(-1) \otimes \mcO_{C}(\eta)$, which is of degree $-3$. 
Hence $l$ is a trisecant of $C$. 

Finally, we show the implication (3) $\ra$ (1). 
Assume that $\Omega_{\P^{5}}(1)|_{C}$ is slope stable for rank $3$ subsheaves. 
Let $x \in \Bl_{C}\P^{5}$ be a point and set $\mcE_{x}$ as in (\ref{def-Ex}). 
Then by Theorem~\ref{thm-Mukai} and the exact sequence (\ref{ex-proj-7}), we obtain inclusions $\mcE_{x} \hra \Omega_{\P^{5}}(1)|_{C} \hra \Omega_{\P^{6}}(1)|_{C}$. 
Note that the determinant bundles of the above three vector bundles are isomorphic. 

It is known (cf. \cite[Proposition~1.1]{Brenner-Hein}, \cite{Atiyah}) that there is a unique decomposition $\mcE_{x}:=\bigoplus_{i=1}^{N} \mcE_{i}$ such that each $\mcE_{i}$ is indecomposable. 
Since $\Omega_{\P^{6}}(1)|_{C}$ is slope stable and $\Omega_{\P^{5}}(1)|_{C}$ is slope stable for rank $3$ subsheaves, 
the following assertions hold for each $i$: 
\begin{itemize}
\item[(a)] $\mu(\mcE_{i})<-7/6$. 
\item[(b)] If $\rk \mcE_{i}=3$, then $\mu(\mcE_{i})<-7/5$. 
\end{itemize}
Note that $\sum_{i=1}^{N} \deg(\mcE_{i})=-7$. 
Set $r_{i}:=\rk(\mcE_{i})$ and $d_{i}:=\deg \mcE_{i}$. 
We may assume that $r_{1} \leq r_{2} \leq \cdots \leq r_{N}$. 
Then the sequence $(r_{i})$ is one of the following: 
$(1,1,1,1)$, $(1,1,2)$, $(1,3)$, $(2,2)$, or $(4)$. 
By Theorem~\ref{thm-Mukai}, 
%in order to show (1), 
it suffices to show that $H^{1}(\mcE_{x} \otimes \xi)=0$ for a general degree $2$ line bundle $\xi$ for each case. 
\begin{itemize}
\item[(i)] If $(r_{i})=(1,1,1,1)$, then we have $d_{i} < -7/6$ and hence $d_{i} \leq -2$, which contradicts that $\sum_{i=1}^{4}d_{i}=-7$. 
\item[(ii)] If $(r_{i})=(1,1,2)$, then we have $d_{1},d_{2} \leq -2$ and $d_{3} \leq -3$. 
Hence $d_{1}=d_{2}=-2$ and $d_{3}=-3$. 
Since $\mcE_{3}$ is slope stable, there is an exact sequence 
$0 \to \mcL_{-2} \to \mcE_{3} \to \mcL_{-1} \to 0$, 
where $\mcL_{i}$ is a line bundle of degree $i$. 
Let $\xi$ be a line bundle of degree $2$ such that $\xi^{-1}$ is neither $\mcE_{1}$, $\mcE_{2}$, nor $\mcL_{-2}$. 
Then we have $H^{1}(\mcE \otimes \xi)=
H^{1}(\mcE_{1} \otimes \xi) \oplus H^{1}(\mcE_{2} \otimes \xi) \oplus H^{1}(\mcE_{3} \otimes \xi)=0$. 
\item[(iii)] Assume that $(r_{i})=(1,3)$. 
By (a), we have $d_{1} \leq -2$.
By (b), we have $d_{2} \leq -5$. 
Hence we have $d_{1}=-2$ and $d_{2}=-5$. 
Let $\xi$ be a line bundle of degree $2$ such that $\xi^{-1} \not\simeq \mcE_{1}$. 
Then we have $H^{1}(\mcE_{x} \otimes \xi)
=H^{1}(\mcE_{1} \otimes \xi) \oplus H^{1}(\mcE_{2} \otimes \xi)
=H^{0}(\mcE_{1}^{\vee} \otimes \xi^{-1})^{\vee} \oplus \Hom(\xi,\mcE_{2}^{\vee})^{\vee}$. 
Note that $H^{0}(\mcE_{1}^{\vee} \otimes \xi^{-1})=0$ since $\xi^{-1} \not\simeq \mcE_{1}$. 
Since $\mcE_{2}$ is slope stable, so is $\mcE_{2}^{\vee}$. 
Since $\mu(\mcE_{2}^{\vee})=5/3$, we have $\Hom(\xi,\mcE_{2}^{\vee})=0$. 
\item[(iv)] If $(r_{i})=(2,2)$, then $d_{i} \leq -3$ for each $i$. 
Hence $\deg \mcE_{1}=-3$ and $\deg \mcE_{2}=-4$. 
Since $\mcE_{1}$ is slope stable and $\mcE_{2}$ is slope semi-stable, 
there are exact sequences 
$0 \to \mcL_{-2} \to \mcE_{1} \to \mcL_{-1} \to 0$
and
$0 \to \mcM_{-2} \to \mcE_{2} \to \mcM_{-2} \to 0$, 
where $\mcL_{i}$ is a line bundle of degree $i$ and $\mcM_{-2}$ is a line bundle of degree $-2$. 
Let $\xi$ be a line bundle of degree $2$ such that $\xi^{-1}$ is not equal to $\mcL_{-2}$ or $\mcM_{-2}$. 
Then It is easy to see $H^{1}(\mcE \otimes \xi)=0$. 
\item[(v)] If $\mcE_{x}$ is slope stable, then $H^{1}(\mcE_{x} \otimes \xi)=\Hom(\xi,\mcE_{x}^{\vee})^{\vee}=0$ for every line bundle $\xi$ of degree $2$. 
\end{itemize}

We complete the proof of Proposition~\ref{7-stability}. \qed

\subsection{Proof of Theorem~\ref{7-mainthm}}\label{subsec-Smoothing}

Let $X \subset \P^{5}$ be an arbitrary complete intersection of two hyperquadrics. 
We will construct an elliptic curve $C$ of degree $7$ on $X$ 
such that $C$ is defined by quadratic equations. 
We proceed with $4$ steps. 

\textit{Step 1.}
First of all, we confirm the following lemma. 
\begin{lem}\label{lem-finitelymanylines}
For an arbitrary closed point $x \in X$, 
lines on $X$ passing through $x$ are finitely many. 
\end{lem}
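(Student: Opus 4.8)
The statement is that a del Pezzo threefold $X$ of degree $4$ has only finitely many lines through any given point $x \in X$. The plan is to argue by contradiction: suppose infinitely many lines pass through $x$. Since the Hilbert scheme of lines on $X$ is projective, these lines would sweep out a surface $T \subset X$, with $x$ a point through which all the lines of a one-parameter family pass, so $T$ is a cone (or at least a surface swept out by lines through $x$). First I would recall that $X = Q_1 \cap Q_2 \subset \P^5$ is an intersection of two quadrics, and consider the projection $\pi_x \colon X \dashrightarrow \P^4$ from $x$; a line on $X$ through $x$ is contracted by $\pi_x$, so infinitely many such lines would force the image of $\pi_x$ to have dimension $\leq 2$, contradicting that $\pi_x$ is generically finite onto a threefold (the image of $X$ under projection from a point is again three-dimensional since $X$ is not a cone with vertex containing a positive-dimensional locus — indeed $X$ is smooth).

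More carefully, the key step is the following: the union of all lines on $X$ through $x$ is contained in $X \cap T_x X$, where $T_x X \cong \P^3$ is the embedded projective tangent space (this holds because any line through $x$ tangent to $X$ — and a line lying on $X$ is certainly tangent — is contained in $T_x X$). Now $X \cap T_x X$ is the intersection of the $3$-plane $T_x X$ with the two quadrics $Q_1, Q_2$, hence a complete intersection of two quadric surfaces in $\P^3$, which is a curve of degree $4$ (a canonical curve of genus $1$, possibly degenerate/reducible/non-reduced, but in any case one-dimensional) unless $T_x X$ is contained in one of the quadrics. I would rule out the degenerate possibility: if $T_x X \subset Q_i$ then $Q_i$ is singular along a large locus, but more to the point $X \cap T_x X$ would then be $Q_j \cap T_x X$, a single quadric surface in $\P^3$, which is two-dimensional — and this surface would have to lie on $X$, contradicting $\Pic(X) \cong \Z \cdot H_X$ (a quadric surface in $\P^3 = T_x X$ has degree $2$ as a surface in $\P^5$, but also one checks it cannot be a multiple of $H_X \cap (\text{something})$; more simply, $X$ contains no surfaces other than hyperplane sections by the Lefschetz-type statement $\Pic(X) = \Z[H_X]$, and a hyperplane section has degree $4$, not $2$). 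Hence $X \cap T_x X$ is a curve, so it contains only finitely many lines, and a fortiori only finitely many lines through $x$.

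The main obstacle I anticipate is the degenerate case analysis: one must ensure that for \emph{every} point $x \in X$ — not just a general one — the intersection $X \cap T_x X$ is genuinely a curve and not a surface, i.e. that $T_x X$ is contained in neither $Q_1$ nor $Q_2$. The cleanest route is the $\Pic(X) \cong \Z$ argument already used twice in the proof of Theorem~\ref{thm-H}: a two-dimensional component of $X \cap T_x X$ would be a surface of degree $\leq 4$ cut out on $X$ by a linear subspace, and since every such surface is a (possibly multiple) hyperplane section of degree divisible by $4 = H_X^3$, having degree $\leq 2$ forces a contradiction; alternatively the two quadrics generate the ideal of $X$, so any plane-or-larger linear space meeting $X$ in a surface would have to lie on one quadric while meeting the other in that surface — but a smooth $X$ contains no plane (again by $\Pic(X) = \Z$), which handles it. Once the degenerate case is excluded, the conclusion is immediate since a reduced-or-not curve in $\P^3$ of degree $4$ contains at most finitely many lines, and these are the only candidates for lines on $X$ through $x$.
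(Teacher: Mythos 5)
Your proof is correct, but it takes a genuinely different and more elementary route than the paper. The paper blows up $x$ and considers the projection $\psi\colon \Bl_{x}X \to \P^{4}$; since $-K_{\Bl_{x}X}=2(\sigma^{*}H_{X}-E)=2\psi^{*}\mcO(1)$, the Stein factorization of $\psi$ is a crepant contraction of a weak Fano $3$-fold, and infinitely many lines through $x$ would sweep out a divisor contracted crepantly, contradicting the classification of weak Fano $3$-folds admitting crepant divisorial contractions in the cited work of Jahnke--Peternell--Radloff. Your argument instead observes that every line on $X$ through $x$ lies in the embedded tangent space $T_{x}X\cong\P^{3}$, so all such lines sit inside $X\cap T_{x}X=(Q_{1}\cap T_{x}X)\cap(Q_{2}\cap T_{x}X)$; any two-dimensional component of this would be a surface on $X$ of degree at most $2$, impossible since $\Pic(X)=\Z[H_{X}]$ forces every surface in $X$ to have degree divisible by $H_{X}^{3}=4$, so $X\cap T_{x}X$ is a curve of degree $4$ and contains at most finitely many lines. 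Your degenerate-case analysis is complete (you cover both the case $T_{x}X\subset Q_{i}$ and the case where the two quadric surfaces in $T_{x}X$ share a plane), and your approach has the advantages of being self-contained, avoiding the classification machinery, and even yielding the explicit bound of at most four lines through any point; the paper's approach is shorter on the page but leans on a substantial external result and implicitly on the nefness of $\sigma^{*}H_{X}-E$.
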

\begin{proof}
Let $\psi \colon \Bl_{x}X \to \P^{4}$ be the restriction of $\Bl_{x}\P^{5} \to \P^{4}$. 
%Let $H$ be the pull-back of a hyperplane section by $\Bl_{x}X \to X$ and $E$ the exceptional divisor. 
%Then we have $\psi^{\ast}\mcO_{\P^{4}}(1)=\mcO_{\Bl_{x}X}(H-E)$. 
%Hence the Stein factorization of $\psi$ is a crepant birational contraction. 
Then the Stein factorization of $\psi$ is a crepant birational contraction. 
Since the proper transforms of the lines passing through $x$ are contracted by $\psi$, 
if there are infinitely many lines on $X$ passing through $x$, 
then $\psi$ contracts a divisor $D$, 
which contradicts the classification of weak Fano $3$-folds having crepant divisorial contractions \cite{JPR05}. 
%Then there are integers $a,b$ such that $D \sim aH-bE$. 
%Since it holds that $0=(H-E)^{2}D$, we have $4a-b=0$, 
%which implies that $D \sim a(H-4E)$. 
%Let $l_{\psi}$ be a non-trivial fiber, which is a proper transform of a line on $X$. 
%Then by \cite[Proposition~1.8]{JPR1}, we have $-2=D.l_{\psi}=a(H-4E).l_{\psi}=a(1-4E.l_{\psi})$, which is a contradiction. 
\end{proof}

\textit{Step 2.}
In this step, we show the following lemma. 

\begin{lem}\label{lem-constCG}
Let $\G$ be a smooth conic on $X$. 
Then there exists a smooth elliptic curve $C$ of degree $5$
satisfying the following. 
\begin{enumerate}
\item[(i)] The scheme-theoretic intersection $C \cap \G$ is reduced one point. 
%$l(\mcO_{C \cap \G})=1$. 
\item[(ii)] $C \cup \G$ has no trisecants on $X$. 
\end{enumerate}
\end{lem}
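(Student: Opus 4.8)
The plan is to build $C$ inside a well-chosen hyperplane section of $X$. Since a smooth elliptic curve of degree $5$ spans a $\P^4$, it automatically lies in a hyperplane section $S=X\cap H$, and such an $S$ --- a smooth complete intersection of two quadrics in $H\simeq\P^4$ --- is a del Pezzo surface of degree $4$. Conversely, every such $S$ carries, for a blow-down presentation $S=\Bl_5\P^2$, the class $D$ of plane cubics through four of the five points; one checks $D^2=D.(-K_S)=5$, that $D$ is nef hence base-point-free, that the general member is a smooth irreducible curve $C$ of arithmetic genus $1$, and that $H^0(S,\mcO_S(1))\to H^0(C,\mcO_C(1))$ is an isomorphism, so $C$ spans $H$ and is a quintic elliptic normal curve there. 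I record two facts about such a $C\subset H\simeq\P^4$ that will be used repeatedly: $C$ is cut out by quadrics and hence has no trisecant; and for a point $p\in S$ not lying on any line of $S$, the projection of $C$ from $p$ is a closed embedding --- it is the restriction to the strict transform of $C$ of the anticanonical embedding $\Bl_p S\hookrightarrow\P^3$ onto a smooth cubic surface, $\Bl_p S$ being then a genuine del Pezzo surface of degree $3$ --- so no line through $p$ meets $C$ with length $\geq 2$.

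First I would fix a general pair of distinct points $p_1,p_2\in\G$ and a general hyperplane $H$ containing the secant line $m:=\ol{p_1p_2}$ but not the plane $\langle\G\rangle$. Then $S:=X\cap H$ is a smooth del Pezzo surface of degree $4$ with $\G\cap S=\{p_1,p_2\}$ reduced, $\G$ meets $S$ transversally at $p_1$ and at $p_2$, and, crucially, $S$ contains no line through $p_1$ or through $p_2$: by Lemma~\ref{lem-finitelymanylines} there are only finitely many lines of $X$ through $p_1$ or $p_2$ (and none of them is $m$, since $m\not\subset X$), and a general $H$ in the three-dimensional family of hyperplanes through $m$ contains none of them. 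Now take $C\in|D|$ general subject to $p_1\in C$; as $p_1,p_2$ are general points of $S$, this $C$ is a smooth irreducible elliptic quintic through $p_1$ missing $p_2$. Then $C\cap\G\subseteq S\cap\G=\{p_1,p_2\}$ reduces to $\{p_1\}$, and since $T_{p_1}C\subset T_{p_1}S$ while $T_{p_1}\G\not\subset T_{p_1}S$, the tangent lines of $C$ and $\G$ at $p_1$ are distinct, so $C\cap\G$ has trivial Zariski tangent space at $p_1$ and is a reduced point. This proves (i), and shows that $C\cup\G$ has an ordinary node at $p_1$ with branches $C$ and $\G$.

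For (ii) I would argue by a length count that no line $\ell$ is a trisecant of $C\cup\G$. A line meets $C$ in length $\leq 2$ (no trisecant) and meets $\G$ in length $\leq 1$ unless $\ell\subset\langle\G\rangle$, in which case it meets $\G$ in length $2$ but meets $C$ only along $C\cap\langle\G\rangle$. Since a plane inside $X$ would be a divisor, contradicting $\Pic X\simeq\Z$, we have $\langle\G\rangle\not\subset X$, hence $\langle\G\rangle\cap X=\G$; thus $m=\langle\G\rangle\cap H$ meets $X$ only in $\{p_1,p_2\}$, reduced at $p_1$, so $m$ is not tangent to $X$ at $p_1$ and in particular $m\neq T_{p_1}C$. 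Consequently $C\cap m=\{p_1\}$ is reduced, as otherwise $m$ would be a secant of $C$ through $p_1$ or equal $T_{p_1}C$, both excluded (the former by the projection fact at $p_1$). Now run the cases. If $\ell$ meets $C$ with length $2$ then $\ell\subset H$, so $\ell$ meets $\G$ only at $p_1$ or $p_2$; meeting $\G$ at $p_2$ would make $\ell$ a line through $p_2$ meeting $C$ with length $\geq 2$, excluded; so $\ell\ni p_1$, whence (no secant of $C$ through $p_1$) $\ell=T_{p_1}C$, which by $T_{p_1}C\neq T_{p_1}\G$ and $T_{p_1}C\neq m$ meets $C\cup\G$ in length exactly $2$. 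If instead $\ell$ meets $C$ with length $\leq 1$, then $\ell$ can be a trisecant only if it meets $\G$ in length $2$, i.e.\ $\ell\subset\langle\G\rangle$; then $\ell\cap C\subseteq C\cap m=\{p_1\}$, and a direct count (again using $T_{p_1}C\neq T_{p_1}\G$) gives length $\leq 2$ for $\ell\cap(C\cup\G)$. So no trisecant exists, which is (ii).

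The main obstacle is (ii), and within it the exclusion of secant and tangent lines of $C$ through the \emph{second} point $p_2$ of $\G\cap H$: a priori the secant variety of $C$ is a hypersurface in $H\simeq\P^4$, so one cannot simply declare $p_2$ generic. What must be extracted is that $S$ is not contained in the secant variety of $C$ --- equivalently, the projection of $S$ from $p_2$ to $\P^3$ is injective and unramified --- and this is exactly where the genericity of $H$, through Lemma~\ref{lem-finitelymanylines}, enters: it forces $p_2$ to avoid every line of $S$, so that $\Bl_{p_2}S$ is a genuine del Pezzo surface of degree $3$ and embeds as a smooth cubic surface. The remaining verifications (base-point-freeness of $|D|$, smoothness and irreducibility of the general member, the isomorphism on global sections) are routine computations on del Pezzo surfaces.
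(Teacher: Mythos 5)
Your construction of $C$ is correct and is essentially the one in the paper: there too one takes a hyperplane section $S\ni p_1,p_2$ chosen, via Lemma~\ref{lem-finitelymanylines}, to contain no line through $p_1$ or $p_2$, and then a general member of $|3h-e_1-\cdots-e_4|$ through $p_1$ (written there as $|3h-(e_0+e_1+\cdots+e_4)|$ on $\Bl_{p_1}S$). Part (i) and the treatment of lines through $p_2$ are fine. The gap is in Case~1 of your proof of (ii), at the claim ``no secant of $C$ through $p_1$, whence $\ell=T_{p_1}C$.'' This is false: since $p_1\in C$, every point $q\in C\setminus\{p_1\}$ gives a secant $\langle p_1,q\rangle$ of $C$ through $p_1$. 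Your projection fact at $p_1$ (the strict transform of $C$ embeds in the cubic surface $\Bl_{p_1}S\subset\P^3$) only says that no line through $p_1$ meets $C\setminus\{p_1\}$ in length $\geq 2$; it does not exclude a line through $p_1$ and exactly one other point of $C$. Each such line meets $C$ in length $2$ and meets $\Gamma$ in length $1$ (at $p_1$), so the subadditive bound $\lgth(\ell\cap(C\cup\Gamma))\leq\lgth(\ell\cap C)+\lgth(\ell\cap\Gamma)$ only gives $\leq 3$, and this one-parameter family of candidate trisecants is exactly the hard case; your argument never addresses it. A related problem affects the asserted ``length exactly $2$'' for $T_{p_1}C$ and the ``direct count'' in your last sub-case: at the node $p_1$ the local intersection length of a line with $C\cup\Gamma$ is \emph{not} controlled by $T_{p_1}C\neq T_{p_1}\Gamma$ alone --- any line through $p_1$ lying in the Zariski tangent plane $\Pi:=\langle T_{p_1}C,T_{p_1}\Gamma\rangle$ already meets the nodal curve with local length $\geq 2$ even if it is neither tangent line, so the naive count can produce a trisecant.

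The missing (and sufficient) observation is the transversality of $\Gamma$ and $H$ at $p_1$, which you do establish but do not exploit: since $T_{p_1}\Gamma\not\subset H$, one has $\Pi\cap H=T_{p_1}C$, and more precisely $(C\cup\Gamma)\cap H=C\sqcup\{p_2\}$ as schemes (locally at $p_1$, $I_{C\cup\Gamma}+(\lambda)=I_C$ for the equation $\lambda$ of $H$, because $\lambda$ restricts to a uniformizer on the branch $\Gamma$). Hence for \emph{every} line $\ell\subset H$ one gets $\lgth(\ell\cap(C\cup\Gamma))=\lgth(\ell\cap C)+\lgth(\ell\cap\{p_2\})\leq 2+\lgth(\ell\cap\{p_2\})$, so the only possible trisecants in $H$ are bisecants of $C$ through $p_2$, which your cubic-surface argument excludes; lines not in $H$ meet $C$ in length $\leq 1$ and are disposed of by $\langle\Gamma\rangle\cap X=\Gamma$ as in your Case~2. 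This repairs the proof using only ingredients you already have, but as written the key case --- lines through the node $p_1$ meeting $C$ in length $2$ --- is handled by a false assertion, so the argument for (ii) does not go through.
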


\begin{proof}
Let $\G$ be a smooth conic on $X$. 
Since the linear span $\braket{\G} \subset \P^{5}$ is not contained in $X$, 
we have $\braket{\G} \cap X=\G$. 
In particular, for any two points on $\G$, the line passing through them is not contained in $X$. 

Let $x_{1} \in \G$ be a point. 
Let $S_{0} \subset X$ be a general hyperplane section such that $S_{0}$ has no lines on $X$ passing through $x_{1}$. 
Set $\{x_{1},x_{2}\}=S_{0} \cap \G$. 
Let $l:=\braket{x_{1},x_{2}}$. 
Then $l \cap X=\{x_{1},x_{2}\}$. 
Note that $\Lambda:=|\mcO_{X}(1) \otimes \mf m_{x_{1},x_{2}}|$ is a 3-dimensional linear system whose general members are smooth. 
By Lemma~\ref{lem-finitelymanylines}, the lines passing through $x_{1}$ or $x_{2}$ on $X$ are finitely many. 
Hence 
\begin{align}\label{awayfromlines}
\text{a general member } S \in \Lambda \text{ does not contain any lines passing through } x_{1} \text{ or } x_{2}. 
\end{align}
We note that $S \cap \G=\{x_{1},x_{2}\}$ still holds. 

Let $\e \colon S \to \P^{2}$ be the blowing-up of $\P^{2}$ at $5$ points. 
Let $h$ be the pull-back of a line and $e_{1},\ldots,e_{5}$ the exceptional curves. 
Let $\s \colon \wt{S} \to S$ be the blowing-up at $x_{1}$ and 
$e_{0}$ the exceptional curve. 
Note that $\wt{S}$ is a del Pezzo surface by (\ref{awayfromlines}). 
We take a general member $\wt{C} \in |3h-(e_{0}+e_{1}+\cdots+e_{4})|$. 
Set $C=\s(\wt{C})$, which is smooth and passing through $x_{1}$. 
Note that $C$ is an elliptic curve of degree $5$ on $S$. 
By taking general $\wt{C}$, we may assume that $C$ does not pass through $x_{2}$. 
Since $S \cap \G=\{x_{1},x_{2}\}$, we have $C \cap \G =x_{1}$, which implies (i). 

We show (ii). 
First of all, there are no bisecants of $\G$ on $X$ since $\braket{\G} \cap X=\G$.
Moreover, $C$ has no trisecants on $X$. Indeed, since $S$ is a complete intersection of two hyperquadrics in $\P^{4}$, if $C$ has a trisecant $l$, then $l$ is contained in $S$. 
Since every line $l$ in $S$ is linear equivalent to $e_{i}$ for $1 \leq i \leq 5$, 
$h-(e_{i}+e_{j})$ for $1 \leq i < j \leq 5$, or
$2h-(e_{1}+\cdots+e_{5})$, 
we have $C.l \leq 2$. 

Hence, if there is a trisecant $l$ of $C \cup \G$, 
%$l$ must be a unisecant of $\G$ and a bisecant of $C$. 
$l$ must be a bisecant of $C$. 
Note that every bisecant $l$ of $C$ is contained in $S$.
In fact, since $l=\braket{C \cap l} \subset \braket{C} \subset \braket{S}$, 
we have $l=l \cap X \subset \braket{S} \cap X=S$. 
Then by (\ref{awayfromlines}), we have $\emp = l \cap \{x_{1},x_{2}\} = l \cap \Gamma \cap S = l \cap \Gamma$. 
Therefore, every bisecant $l$ of $C$ does not meet $\Gamma$. 
Hence $\Gamma \cup C$ has no trisecants on $X$. 
%Hence every trisecant $l$ of $C \cup \G$ must be one of the following: 
%\begin{itemize}
%\item $l_{i}:=h-(e_{i}+e_{5})$, where $i \in \{1,2,3,4\}$. 
%\item $l_{0}:=2h-(e_{1}+\cdots+e_{5})$. 
%\end{itemize}
%It suffices to show that $l_{i}$ does not meet $\G$ for every $i \in \{0,\ldots,4\}$. 
%Fix $i$. 
%Note that $\G \cap l_{i} \subset \G \cap S=\{x_{1},x_{2}\}$. 
%If $l_{i}$ passes through $x_{j}$ for some $j$, 
%$l_{i}$ must be a line on $X$ passing through $j$, 
%which contradicts (\ref{awayfromlines}). 
%Hence $\G \cap l_{i}=\emp$. 
We complete the proof of Lemma~\ref{lem-constCG}. 
\end{proof}

\textit{Step 3.}
Let $C$ and $\G$ be as in Lemma~\ref{lem-constCG}. 
We regard $C \cup \G$ as a reduced curve on $X$. 
Then the Hilbert polynomial of $C \cup \G$ is $7t$, 
where $t$ is a variable. 

In this step, we prove that $C \cup \G$ is strongly smoothable in $X$ in the sense of \cite{Hartshorne-Hirschowitz}.
Our proof is essentially the same as \cite[Proof of Lemma~6.2]{Beauville}. 
Let $x_{1}=C \cap \G$. 
By \cite[Theorem~4.1]{Hartshorne-Hirschowitz}, it suffices to check that 
%$H^{1}(\elm^{-}_{x_{1}}\mcN_{C/X})=0$
$H^{1}(\mcN_{C/X}(-x_{1}))=0$ and %Proof of \cite[Prop 2.3 (b)]{{Hartshorne-Hirschowitz}}
$H^{1}(\mcN_{\G/X})=0$. 
Note that $H^{1}(\mcN_{C/X}(-x_{1}))=0$ is already proved in 
\cite[Proof of Lemma~6.2]{Beauville}. 
Let us show that $H^{1}(\mcN_{\G/X})=0$. 
Take a smooth hyperquadric $\Q^{4}$ in $\P^{5}$ containing $X$. 
%There is a smooth hyperquadric $X \subset \Q^{4}$. 
Then $\mcN_{\G/\Q^{4}} \simeq \mcO_{\P^{1}}(2)^{3}$. 
From the normal bundle sequence 
$0 \to \mcN_{\G/X} \to \mcN_{\G/\Q^{4}} \simeq \mcO_{\P^{1}}(2)^{3} \to \mcN_{X/\Q^{4}}|_{\G} \simeq \mcO_{\P^{1}}(4) \to 0$, 
it follows that $\mcN_{\G/X}$ is spanned, which implies the vanishing $H^{1}(\mcN_{\G/X})=0$. 
%Since there is an exact sequence 
%$0 \to \elm^{-}_{x_{1}}\mcN_{\G/X} \to \mcN_{\G/X} \to \mcO_{\{x_{1}\}} \to 0$, 
%we have $H^{1}(\elm^{-}_{x_{1}}\mcN_{\G/X})=0$. 
Hence $C \cup \G$ is strongly smoothable. 
%Hence, if we take $a,b \in \Z$ such that $\mcN_{\G/X}=\mcO_{\P^{1}}(a) \oplus \mcO_{\P^{1}}(b)$, then it follows that $a \leq 2$, $b \leq 2$, and $a+b=2$. 
%Therefore, $\mcN_{\G/X}$ is spanned, which implies the vanishing $H^{1}(\mcN_{\G/X})=0$. 

%This means that the Hilbert scheme $\Hilb_{7t}(X)$ is smooth at $C \cup \G$ and a general fiber of the universal family $U \to \Hilb_{7t}(X)$ is smooth. 

\textit{Step 4.} 
Finally, we prove Theorem~\ref{7-mainthm}. 
Let $C$ and $\G$ be as in Lemma~\ref{lem-constCG}. 
By Step 3, there is a smooth neighborhood $\Delta$ of $\Hilb_{7t}(X)$ at $[C \cup \G]$ such that the base change of the universal family $\mcC \to \D$ is a smoothing of $C \cup \G$. 
Let $C'$ be a general fiber of this smoothing. 
Since $C \cup \G$ is connected, so is $C'$. 
Hence $C'$ is an elliptic curve of degree $7$. 
Since $C \cup \G$ has no trisecants on $X$, general fibers have no trisecants on $X$ also. 
Hence $C'$ has no trisecants on $X$. 
Note that every trisecant $l$ of $C'$ in $\P^{5}$ must be contained in $X$ since $C'$ is on $X$ and $X$ is a complete intersection of two hyperquadrics in $\P^{5}$. 
Hence $C'$ has no trisecants on $\P^{5}$, which implies that $C'$ is defined by quadratic equations by Proposition~\ref{7-stability}. 
We complete the proof of Theorem~\ref{7-mainthm} and that of Theorem~\ref{thm-existence}. \qed

\begin{comment}%Openness of the condition having no trisecants
Note that 
\[\Hilb^{0}_{7t}(X) := \{[C] \in \Hilb_{7t}(X) \mid \text{ for every line $l$, we have $l(\mcO_{C \cap l}) \leq 2$} \} \]
is open in $\Hilb_{7t}(X)$. 
To see this, 
we consider the following diagram. 
\[\xymatrix{
&\Hilb_{7t}(X) \times \Hilb_{t+1}(X)\ar[rd] \ar[ld]& \\
\Hilb_{7t}(X)&U:=U_{7t}(X) \times_{X} U_{t+1}(X)\ar[d]_{\pi}\ar[u] \ar[rd]\ar[ld]&\Hilb_{t+1}(X) \\
U_{7t}(X) \ar[r] \ar[u]&X&U_{t+1}(X) \ar[u] \ar[l]
}\]
For a closed point $([C],[l]) \in \Hilb_{7t}(X) \times \Hilb_{t+1}(X)$, 
the fiber $\pi^{-1}([C],[l])$ is $C \cap l$. 
Now we consider 
\[U^{0}:=\{p=([C],[l]) \in U \mid \pi^{-1}(p)=C \cap l \text{ is empty or $0$-dimensional with } l(\mcO_{C \cap l}) \leq 2. \}\]
If $U^{0}$ is open, then so is $\Hilb^{0}_{7t}(X)=\pr_{1}(U)$. 
Thus the statement follows from 
\begin{claim}
Let $\pi \colon X \to Y$ be a proper morphism between noetherian schemes. 
%schemes of finite type over an algebraically closed field. 
Let $d \in \Z_{>0}$. 
Then $U:=\{y \in Y \mid \dim \pi^{-1}(y) \leq 0 \text{ and } l(\pi^{-1}(y)) \leq d\}$ is open. 
Here $l(\pi^{-1}(y))$ denotes the number $\dim_{\Bbbk(y)}\mcO_{\pi^{-1}(y)}$. 
\end{claim}
\begin{proof}
Set $Z_{\infty}=\{y \in Y \mid \dim \pi^{-1}(y) \geq 1\}$. 
This is closed since $\pi$ is proper. 
Thus we may assume that $\pi$ is finite. 
Consider $Z_{d}=\{y \in Y \mid \dim_{\Bbbk(y)} (\pi_{\ast}\mcO_{X}) \otimes \Bbbk(y) \geq d\}$, which is closed. 
Since $\pi$ is finite, $\Spec \pi_{\ast}\mcO_{X} \otimes \Bbbk(y) \simeq \pi^{-1}(y)$ as schemes on $\Spec \Bbbk(y)$. 
\end{proof}
\end{comment}

\subsection{An example of a non-weak Fano instanton bundle}

We conclude this paper as showing that 
every complete intersection of two hyperquadrics $X \subset \P^{5}$ has an instanton bundle $\mcE$ with $c_{2}(\mcE)=3$ 
such that $\mcE$ is not a weak Fano bundle. 
To show the above result, we show the following proposition. 
\begin{prop}\label{prop-trisecex}
Let $X \subset \P^{5}$ be an arbitrary smooth complete intersection of two hyperquadrics. 
Let $l \subset X$ be a general line such that $\mcN_{l/X} \simeq \mcO_{l}^{\oplus 2}$. 
%Since $X$ is swept out by lines, there is a free line on $X$. 
Then there exists a non-degenerate smooth elliptic curve $C \subset X \subset \P^{5}$ of degree $7$
such that $C$ transversally meets $l$ at $3$ points. 
\end{prop}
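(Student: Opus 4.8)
The plan is to obtain $C$ by smoothing, inside $X$, the union of an elliptic quintic and a conic, but to carry out the smoothing so that three fixed points of $l$ stay on the curve; this is a variant of the argument proving Theorem~\ref{7-mainthm}.

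First I would fix a general line $l \subset X$, so that $\mcN_{l/X} \simeq \mcO_l^{\oplus 2}$, and a general hyperplane section $S = X \cap H_0$ containing $l$. Then $S$ is a smooth del Pezzo surface of degree $4$, non-degenerate in $H_0 \simeq \P^4$, and $l$ is one of its sixteen lines. A direct computation in the Picard lattice of $S$ produces a class of smooth elliptic quintics on $S$ meeting $l$ in two points (for example $4h - 2e_1 - e_2 - e_3 - e_4 - 2e_5$ when $l = e_5$). Next I would fix a third point $p_3 \in l$ and a smooth conic $\Gamma \subset X$ with $p_3 \in \Gamma$, $\Gamma \cap l = \{p_3\}$ and $\Gamma \not\subset H_0$; since $\Gamma \not\subset H_0$ it meets $S$ in two points, and I fix one of them, $x_1$. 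Finally I would take $D$ to be a general member on $S$ of the quintic class above passing through $x_1$ and through two prescribed general points $p_1, p_2$ of $l$. For suitably general choices, $D$ is a smooth elliptic quintic with $D \cap l = \{p_1, p_2\}$ and $D \cap \Gamma = \{x_1\}$, all transversal, and $D \subset H_0$; hence $C_0 := D \cup \Gamma$ is a connected nodal curve of degree $7$ and arithmetic genus $1$, non-degenerate in $\P^5$ (because $\Gamma \not\subset H_0 = \langle D \rangle$), meeting $l$ exactly at the three reduced points $p_1, p_2, p_3$.

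The main step is to smooth $C_0$ inside $X$ keeping it through $p_1, p_2, p_3$. By the Hartshorne--Hirschowitz criterion \cite[Theorem~4.1]{Hartshorne-Hirschowitz}, applied with these three fixed points and with the node $x_1$ placed on the $D$-side, it suffices to check
\[ H^{1}(D, \mcN_{D/X}(-x_1-p_1-p_2)) = 0 \qquad\text{and}\qquad H^{1}(\Gamma, \mcN_{\Gamma/X}(-p_3)) = 0. \]
For the first, the normal bundle sequence of $D \subset S \subset X$ reads $0 \to \mcN_{D/S} \to \mcN_{D/X} \to \mcO_X(1)|_D \to 0$, with $\deg \mcN_{D/S} = D^2 = 5$ and $\deg \mcO_X(1)|_D = H_X.D = 5$; twisting down by the degree-$3$ divisor $x_1 + p_1 + p_2$ leaves both outer terms of positive degree on the elliptic curve $D$, hence with vanishing $H^1$. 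For the second, the sequence $0 \to \mcN_{\Gamma/X} \to \mcN_{\Gamma/\Q^4} \simeq \mcO_{\P^1}(2)^{\oplus 3} \to \mcO_{\P^1}(4) \to 0$ used in the proof of Theorem~\ref{7-mainthm} shows $\mcN_{\Gamma/X}$ is globally generated, so all its summands are $\geq 0$ and those of $\mcN_{\Gamma/X}(-p_3)$ are $\geq -1$, whence $H^1 = 0$. Consequently $C_0$ is strongly smoothable through $p_1, p_2, p_3$, and a general member $C$ of the resulting family is a smooth connected curve of degree $7$ and genus $1$ passing through $p_1, p_2, p_3$; it is also non-degenerate, since non-degeneracy is an open condition satisfied by $C_0$.

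It remains to see that $C$ meets $l$ in exactly three points. By construction $C \cap l \supseteq \{p_1, p_2, p_3\}$, so this scheme has length $\geq 3$. Over the parameter space $T$ of the smoothing, the intersection of the universal curve with $l \times T$ is finite over $T$ (a general member of the family does not contain $l$), its fibre over $[C_0]$ is the reduced length-$3$ scheme $\{p_1, p_2, p_3\}$, and fibre length is upper semicontinuous; hence the generic fibre also has length $3$, and since "being three distinct reduced points" is open and holds at $[C_0]$, it consists of three distinct reduced points. Two curves in the smooth threefold $X$ meeting at a reduced point meet transversally there, so $C$ meets $l$ transversally at exactly three points, as required. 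The step I expect to be most delicate is this last one — excluding an extra intersection point of $C$ with $l$ after the smoothing — together with arranging the genericity conditions in the construction of $C_0$ so that they all hold simultaneously.
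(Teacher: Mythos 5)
Your argument is sound in outline but takes a genuinely different route from the paper. The paper does not smooth a reducible curve for Proposition~\ref{prop-trisecex}: it blows up the general line $l$ and uses the induced birational morphism $\Bl_{l}X=\Bl_{B}\P^{3}\to\P^{3}$, where $B$ is a quintic of genus $2$ lying on the smooth quadric $Q_{0}$ that is the image of the exceptional divisor over $l$. The septic elliptic curve is then produced directly as the proper transform of a general quartic elliptic curve $C_{12}\subset Q_{1}$ passing through five of the ten points of $B\cap Q_{1}$; the three residual points of $C_{12}\cap Q_{0}$ map to the three points of $C\cap l$, and smoothness, the degree count $3\cdot 4-5=7$, and transversality are read off from the explicit diagram with no deformation theory. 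Your approach instead builds the nodal curve $D\cup\Gamma$ inside $X$ (an elliptic quintic on a hyperplane section through $l$, plus a conic) and smooths it while pinning three marked points of $l$. This stays entirely inside $X$ and is closer in spirit to the smoothing proof of Theorem~\ref{7-mainthm}, but it replaces the paper's explicit geometry with deformation-theoretic and genericity bookkeeping; the final semicontinuity argument for ``exactly three intersection points'' is correct as you state it (the fibre length of the finite scheme $\mcC\cap(l\times T)\to T$ is upper semicontinuous and equals $3$ at $[C_0]$, while the three fixed points force it to be at least $3$ generically).

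Two places need more care before this is a proof. First, \cite[Theorem~4.1]{Hartshorne-Hirschowitz}, as used in the paper, gives smoothability of a nodal union from $H^{1}(\mcN_{D/X}(-x_{1}))=H^{1}(\mcN_{\Gamma/X})=0$; the variant with three marked points held fixed is not literally that statement, so you should either cite a version with point conditions or give the short argument that $H^{1}(\mcN_{D/X}(-x_{1}-p_{1}-p_{2}))=H^{1}(\mcN_{\Gamma/X}(-p_{3}))=0$ makes the Hilbert scheme of curves through $p_{1},p_{2},p_{3}$ smooth at $[D\cup\Gamma]$ with the sections surjecting onto $T^{1}$ at the node. (Your cohomology computations themselves are fine: $\mcN_{D/X}$ is an extension of two degree-$5$ line bundles on the elliptic curve $D$, and $\mcN_{\Gamma/X}\simeq\mcO(2)\oplus\mcO$ or $\mcO(1)^{\oplus 2}$.) Second, the configuration itself: since $p_{3}\in l\subset S$, the point $p_{3}$ is automatically one of the two points of $\Gamma\cap H_{0}$, so you must take $x_{1}$ to be the \emph{other} one and require $\Gamma$ transversal to $H_{0}$ at $p_{3}$ for it to exist; and the assertions that a general member of $|4h-2e_{1}-e_{2}-e_{3}-e_{4}-2e_{5}|$ with $D|_{l}=p_{1}+p_{2}$ and $x_{1}\in D$ is smooth, irreducible and non-degenerate in $H_{0}$ need the surjectivity of $H^{0}(D)\to H^{0}(\mcO_{l}(2))$ (which holds, as $h^{0}(D)=6$ and $h^{0}(D-e_{5})=3$ for general $S$) plus a Bertini check at the imposed base points. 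None of this is a fatal obstacle, but it is where the proof actually has to be written.
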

\begin{proof}
Let $\sigma \colon \wt{X}:=\Bl_{l}X \to X$ be the blowing-up. 
Then the restriction of the projection morphism $\Bl_{l}\P^{5} \to \P^{3}$ is 
a birational morphism $\tau \colon \wt{X} \to \P^{3}$. 
It is known that $\tau$ is the blowing-up along a smooth curve $B \subset \P^{3}$ of genus $2$ and degree $5$ (cf. \cite[Proposition~3.4.1]{Fanobook}). 
Letting $E=\Exc(\sigma)$ and $Q_{0}:=\tau(E)$, 
we can show that $Q_{0} \subset \P^{3}$ is a smooth quadric surface containing $B$. 
\begin{align}\label{diag-trisecex}
\xymatrix{
&&&E \ar[lldd]_{\simeq} \ar[rrdd]^{\s|_{E}} \ar@{^{(}->}[d]&& \\
&&&\ar[ld]_{\tau}\Bl_{B}\P^{3}=\wt{X}=\Bl_{l}X\ar[rd]^{\sigma} \ar[rd]&& \\
B&\ar@{}[l]|{\subset}Q_{0}&\ar@{}[l]|{\subset}\P^{3}&&X\ar@{}[r]|{\supset}&l
}
\end{align}
From now on, we fix an isomorphism 
\begin{align}\label{isom-trisecex}
Q_{0} \simeq (\P^{1})^{2} \text{ s.t. } B \in |\mcO_{(\P^{1})^{2}}(2,3)|.
\end{align}
Then $\pr_{1} \colon (\P^{1})^{2} \to \P^{1}$ coincides with $\s|_{E} \colon E \to l$ under this identification $E \simeq Q_{0} \simeq (\P^{1})^{2}$. 

Let us take a general quadric $Q_{1} \subset \P^{3}$. 
Then $C_{01}:=Q_{0} \cap Q_{1}$ is a smooth elliptic curve and 
$B \cap Q_{1}$ consists of $10$ points. 
Note that $C_{01}$ contains $B \cap Q_{1}$. 
Take distinct $5$ points $p_{1},\ldots,p_{5} \in B \cap Q_{1}$
 and set $Z=\{p_{1},\ldots,p_{5}\}$. 
\begin{claim}\label{claim-trisecex}
The following assertions hold. 
\begin{enumerate}
\item The linear system $|\mcI_{Z/Q_{1}}(2)|$ has smooth members and $\Bs|\mcI_{Z/Q_{1}}(2)|=Z$. 
\item A general smooth member $C_{12} \in |\mcI_{Z/Q_{1}}(2)|$ satisfies the following conditions. 
\begin{enumerate}
\item $C_{12}$ meets $B$ transversally and at $5$ points $p_{1},\ldots,p_{5}$. 
\item $C_{12}$ meets $Q_{0}$ transversally at $8$ points $p_{1},\ldots,p_{5},q_{1},q_{2},q_{3}$. 
\item Under the identification $Q_{0} \simeq (\P^{1})^{2}$ as in (\ref{isom-trisecex}), 
we have $\pr_{1}(q_{i}) \neq \pr_{1}(q_{j})$ for $1 \leq i<j \leq 3$. 
\end{enumerate}
\end{enumerate}
\end{claim}
\begin{proof}
We show (1). 
Since $C_{01}$ contains $Z$, we have $C_{01} \in |\mcI_{Z/Q_{1}}(2)|$. 
Hence general members of $|\mcI_{Z/Q_{1}}(2)|$ are smooth. 
Moreover, the proper transform of $C_{01}$ on $\Bl_{Z}Q_{1}$ is an anti-canonical member, which implies that $|-K_{\Bl_{Z}Q_{1}}|$ is base point free. 
Hence $\Bs|\mcI_{Z/Q_{1}}(2)|=Z$. 

We show (2). 
The condition (a) is general since $B \cap Q_{1}=\{p_{1},\ldots,p_{10}\}$ and 
$\Bs |\mcI_{Z/Q_{1}}(2)| = Z$. 
In order to show the conditions (b) and (c) are general, 
we recall the inclusions $Z  \subset C_{01}=Q_{0} \cap Q_{1} \subset Q_{1}$ and consider the following exact sequence:
\[0 \to \mcO_{Q_{1}} \to \mcI_{Z/Q_{1}}(2) \to \mcI_{Z/C_{01}}(2) \to 0.\]
%Note that $\mcI_{C_{01}/Q_{1}}(2) \simeq \mcO_{Q_{1}}$ since $C_{01}$ is a member of $|\mcO_{Q_{1}}(2)|$. 
It follows from the above exact sequence that 
the restriction morphism 
$H^{0}(\mcI_{Z/Q_{1}}(2)) \to H^{0}(\mcI_{Z/C_{01}}(2))$
is surjective. 
Note that $\mcL_{3}:=\mcI_{Z/C_{01}}(2)$ is an invertible sheaf of degree $3$ on the elliptic curve $C_{01}$. 
Hence a general member $Z_{3} \in |\mcI_{Z/C_{01}}(2)|$ consists of $3$ points $\{q_{1},q_{2},q_{3}\}$. 
Therefore, the condition (b) is general. 

Finally, we show the condition (c) is general. 
Recall the inclusion $C_{01} \subset Q_{0} \simeq (\P^{1})^{2}$. 
Assume that $\#(\pr_{1}(Z_{3}))_{\red} \leq 2$ for a general member $Z_{3} \in |\mcI_{Z/C_{01}}(2)|=|\mcL_{3}|$. 
This means that a general section $s \in H^{0}(C_{01},\mcL_{3})$ can be divided by a section of $\mcL_{2}:=\mcO_{\P^{1}}(1,0)|_{C_{01}}$, which is a line bundle of degree $2$. 
Then the linear map $H^{0}(\mcL_{2}) \otimes H^{0}(\mcL_{3} \otimes \mcL_{2}^{\vee}) \to H^{0}(\mcL_{3})$ is surjective, which is a contradiction. 
Hence $\#(\pr_{1}(Z_{3}))=3$ for general member $Z_{3}=\{q_{1},q_{2},q_{3}\}$. 
Therefore, the condition (c) is general. 
We complete the proof of Claim~\ref{claim-trisecex}. 
\end{proof}

Let $C_{12} \in |\mcI_{Z/Q_{1}}(2)|$ be a general smooth member. 
Recall the birational map $\P^{3} \dashrightarrow X$  in the diagram (\ref{diag-trisecex}). 
We show that the proper transform $\ol{C_{12}} \subset X$ of $C_{12}$ is a non-degenerate septic smooth elliptic curve on $X$ which meets the line $l$ at $3$ points transversally. 

First, we show $\ol{C_{12}} \subset X$ is of degree $7$. 
Let $\wt{C_{12}} \subset \Bl_{B}\P^{3}$ be the proper transform of $C_{12}$ on the blowing-up $\tau \colon \Bl_{B}\P^{3} \to \P^{3}$. 
By the condition Claim~\ref{claim-trisecex}~(2)~(a), 
we have $\Exc(\tau).\wt{C_{12}}=5$. 
Since 
$\sigma^{\ast}\mcO_{X}(1) \simeq \tau^{\ast}\mcO_{\P^{3}}(3) \otimes \mcO(-\Exc(\tau))$, 
we have $\sigma^{\ast}\mcO_{X}(1).\wt{C_{12}}=7$. 
Hence $\ol{C_{12}}$ is of degree $7$.

Next, we show $\ol{C_{12}}$ is a smooth elliptic curve meeting $l$ transversally at $3$ points. 
By the condition Claim~\ref{claim-trisecex}~(2)~(b), 
the proper transform $E \subset \Bl_{B}\P^{3}$ of $Q_{0}$ meets $\wt{C_{12}}$ transversally at $3$ points. 
As we saw in the diagram (\ref{diag-trisecex}), 
$E \simeq Q_{0}$ is contracted by $\sigma \colon \Bl_{B}\P^{3} \to X$ and $\sigma|_{E} \colon E \to l$ corresponds to the 1st projection $\pr_{1} \colon (\P^{1})^{2} \to \P^{1}$ under the isomorphism (\ref{isom-trisecex}). 
Then by the condition Claim~\ref{claim-trisecex}~(2)~(c), 
$\s|_{\wt{C_{12}}} \colon \wt{C_{12}} \to \ol{C_{12}}$ is isomorphic. 
Moreover, $\ol{C_{12}}$ meets $l$ at the image of the $3$ points $q_{1},q_{2},q_{3}$. 
Therefore, $\ol{C_{12}}$ is a smooth elliptic curve of degree $7$ which meets $l$ at $3$ points. 

Finally, we confirm that the embedding $\ol{C_{12}} \subset X \subset \P^{5}$ is non-degenerated. 
Assume that there is a hyperplane section $H$ of $X$ containing $\ol{C_{12}}$. 
Since $l$ is a trisecant of $\ol{C_{12}}$, $H$ must contain $l$. 
Thus the proper transform $\ol{H}$ of $H$ on $\P^{3}$ is a hyperplane of $\P^{3}$. 
Then $\ol{H}$ contains $C_{12}$, which contradicts that $C_{12}$ is of degree $4$ in $\P^{3}$. 
We complete the proof of Proposition~\ref{prop-trisecex}. 
\end{proof}

\begin{rem}\label{rem-nonweakfano}
Let $X \subset \P^{5}$ be an arbitrary smooth complete intersection of two hyperquadrics. 
By Proposition~\ref{prop-trisecex}, there is a smooth elliptic curve $C$ of degree $7$ having a trisecant. 
By the Hartshorne-Serre construction, 
there is a rank $2$ vector bundle $\mcE$ fitting into $0 \to \mcO_{X}(-1) \to \mcE \to \mcI_{C/X}(1) \to 0$. 
This vector bundle $\mcE$ is an instanton since $H^{1}(\mcE(-1))=H^{1}(\mcI_{C/X})=0$ and $c_{2}(\mcE)=3$. 
However, $\mcE$ is not a weak Fano bundle since there are surjections $\mcE|_{l} \epm \mcI_{C/X}(1) \epm \mcO_{l}(-2)$. 
\end{rem}

\providecommand{\bysame}{\leavevmode\hbox to3em{\hrulefill}\thinspace}
\providecommand{\MR}{\relax\ifhmode\unskip\space\fi MR }
% \MRhref is called by the amsart/book/proc definition of \MR.
\providecommand{\MRhref}[2]{%
  \href{http://www.ams.org/mathscinet-getitem?mr=#1}{#2}
}
\providecommand{\href}[2]{#2}

\end{document}